\newcommand{\cF}{{\cal F}}
\newcommand{\RR}{{\mathbb R}}
\newcommand{\EX  }{{\mathbb E}}
\newcommand{\EE}{{\mathbb E}}
  \newcommand{\PX}{{\mathbb P}}
\newcommand{\PP}{{\mathbb P}}
\newcommand{\s}{\sigma}
\newcommand{\e}{\varepsilon}
\newcommand{\om}{\omega}
\newcommand{\Om}{\Omega}
\renewcommand{\cF}{\mathcal F}
\numberwithin{equation}{section}
\newtheorem{theorem}{Theorem}[section]
\newtheorem{definition}[theorem]{Definition}
\newtheorem{proposition}[theorem]{Proposition}
\newtheorem{Def}[theorem]{Definition}
\newtheorem{Prop}[theorem]{Proposition}
\begin{document}
\title[LDP and the zero viscosity limit for 2D NSE]{Large Deviations
and the zero viscosity limit for  2D stochastic Navier Stokes Equations with  free boundary}
\author[H. Bessaih]{Hakima Bessaih}
\address{University of Wyoming, Department of Mathematics, Dept. 3036, 1000
East University Avenue, Laramie WY 82071, United States}
\email{ bessaih@uwyo.edu}

\author[A. Millet]{ Annie Millet}
\address{SAMM, EA 4543,
Universit\'e Paris 1 Panth\'eon Sorbonne, 90 Rue de
Tolbiac, 75634 Paris Cedex France {\it and} Laboratoire de
Probabilit\'es et Mod\`eles Al\'eatoires,
  Universit\'es Paris~6-Paris~7, Bo\^{\i}te Courrier 188,
      4 place Jussieu, 75252 Paris Cedex 05, France}
\email{annie.millet@univ-paris1.fr {\it and}
annie.millet@upmc.fr}

\begin{abstract}
Using a weak convergence approach, we prove a LPD for the solution of  2D
stochastic Navier Stokes equations when the viscosity converges to 0 and
the noise intensity is multiplied by the square root of the viscosity.
Unlike previous results on LDP for hydrodynamical models, the weak convergence
is proven by tightness properties of the distribution of the solution in appropriate functional spaces.
\end{abstract}
\maketitle
{\bf Keywords:}  Models of turbulence, viscosity coefficient and
Navier-Stokes equations, Euler equation, stochastic PDEs, Radonifying operators, large deviations.\\
\\
\\
{\bf Mathematics Subject Classification 2000}: Primary 60H15, 60F10;
60H30; Secondary 76D06, 76M35. \maketitle


\section{Introduction}\label{s1}
The vanishing viscosity limit for solutions of Navier-Stokes equations is a singular limit,
that means that the type of the equation may change in the limit.
Singular limits are ubiquitous in applied mathematics and correspond to physical reality.
In bounded domains, the vanishing viscosity limit
shows a physical phenomena called boundary layers.
The Navier-Stokes equations are second-order differential equations and require Dirichlet boundary conditions,
while the Euler equation only  requires for the particle paths to be tangent to the boundary.
At present the problem of vanishing viscosity limit is open even in two dimensions in bounded domains,
while more progress has been made in the study of the limit
when there is no boundary
or when we impose particular boundary conditions, like the one we are considering in this paper.
There are two distinct concepts of vanishing viscosity limit.
The finite-time, zero viscosity limit of solutions of the Navier-Stokes with a fixed initial datum and with
time $t$ in some finite interval $[0,T]$. By contrast, in the infinite-time zero-viscosity limit,
long-time averages of functionals of the solutions are considered first at fixed $\nu$.
These are represented by measures $\mu^{\nu}$ in functions spaces and the zero-viscosity limit
$\lim_{\nu\rightarrow 0}\mu^{\nu}$ is then studied. The two kinds of limits are not the same.

In the present paper, we are dealing with flows described by stochastic Navier Stokes equations in dimension 2
of the following form
\begin{equation}\label{Euler}
\frac{\partial u_\nu(t)}{\partial t}- \nu \Delta u_\nu(t) + (u_\nu(t) \cdot\nabla)u_\nu(t) =-\nabla p+ G_\nu (t, u_\nu(t))\frac{\partial W(t,.)}{\partial t},
\end{equation}
in an open bounded domain $D$ of $\RR^{2}$ with a smooth boundary
$\partial D$ which satisfies the locally Lipschitz condition see
\cite{adams}. Here,  $u_\nu$ is the velocity of the fluid, $\nu>0$ is its
viscosity, $p$ denotes the
pressure, $W$ is a Gaussian random field white
in time, subject to the restrictions imposed below on the space correlation and
$G_\nu$ is an operator acting on the solution.
The velocity field $u_\nu$
is subject to the incompressibility condition
\begin{equation}\label{incomp_condition}
\nabla\cdot u_\nu(t,x)=0,\ \ \ t\in [0,T],\ \ \ x\in D,
\end{equation}
and to the boundary condition for every $t\in [0,T]$
\begin{equation}
u_\nu(t,.)\cdot n=0\ \ {\rm and}\ \ \mbox{\rm curl } u_\nu(t,.)=0\ \ \ {\rm on}\ \partial D,\label{boundary_condition}
\end{equation}
$n$ being the unit outward normal to $\partial D$. The initial
condition is the function $\zeta$ defined by:
\begin{equation}\label{initial_condition}
u_\nu(0,x)=\zeta(x), \ \ \forall  x\in D.
\end{equation}

We are interested in the asymptotic properties of the distribution of the
process $u_\nu(t,.)$ as the viscosity goes to 0. More precisely, the aim of the present paper  is to prove a Large Deviation Principle
(LDP) for the stochastic 2D Navier Stokes equations  \eqref{Euler}
when the viscosity coefficient $\nu \to 0$ and the noise $W$  is multiplied by the
square root of the viscosity, in order to be in the Freidlin-Wentzell setting.
A similar idea has been pursued by S. B. Kuksin in \cite{kuksin_cpm2008}, where he studied the convergence
of the invariant measure of the equation \eqref{Euler} when it is driven by an additive degenerate noise.
Indeed, Kuskin establishes asymptotic properties of this invariant measure  when the viscosity is small.
In this paper, we study the exponential concentration of the distribution of the process
$u_\nu(t,.)$ for a fixed $t$, when the viscosity decays to zero;
we hope to be able to extend this study for stationary solutions.

Several recent papers have studied a LDP for the distribution of the
solution to a hydro-dynamical stochastic evolution equation. We
refer to \cite{SrSu} for the 2D Navier-Stokes equations, \cite{DM}
for the Boussinesq  model, \cite{ChuMi} for more general
hydro-dynamic models, \cite{RocZha}   for tamed 3D Navier Stokes equations.
All the above papers consider an equation with
a (fixed) positive viscosity coefficient and study the exponential
concentration to a deterministic model when the noise intensity is
multiplied by a coefficient $\sqrt{\epsilon}$ which converges to 0.
They deal with a multiplicative noise and use the weak
convergence approach of LDP, based on the Laplace principle,
developed by P. Dupuis and R. Ellis in \cite{DupEl97}.

Reference \cite{BesMil} dealt with  a simpler
equation driven by a multiplicative noise and a vanishing viscosity coefficient, that is a shell
model of turbulence. Under certain conditions on the initial
condition and the operator acting on the noise, this equation is
well posed in ${\mathcal C}([0,T]; V)$ where $V$ is a Hilbert space similar to $H^{1,2}$.
A LDP was proved for a weaker topology,
that of $L^{2}(0,T; \mathcal{H})$, where $\mathcal{H}$ is a subspace
of $V$ similar to $H^{\frac{1}{2},2}$, with the same scaling between the "viscosity"
and the square of the noise intensity. The technique used was  again the weak convergence approach.
To our knowledge, this was  the first paper that proved  a
LDP when the coefficient in front of the
noise term depends on the viscosity and converges to 0. Let us  point out  that the study of
the inviscid limit is an important step towards understanding turbulent fluid flows in general.
Let us also refer to the paper of M. Mariani \cite{MaMa}, where a "nonviscous"  scalar equation is
considered in the context of conservation laws. However the
techniques used in that paper are completely different from the ones
used here and in \cite{BesMil}.

In this paper, we will generalize
our result to the Navier Stokes  equations  \eqref{Euler} in a bounded domain
of $\mathbb{R}^{2}$; this is technically more involved.
Here, the family $(G_\nu, \nu>0)$ of operators is of the form $G_\nu = \sqrt{\nu} \sigma_\nu$,
where  the family  $\sigma_\nu$ converges to $ \sigma_0$ in an appropriate topology as $\nu \to 0$. Similarly,
we can deal with a more general family $(\sigma_\nu , \nu >0)$ of gradient type converging to some more
regular operator $\sigma_0$ which is no longer of gradient type.
Gradient type noise is an active topic of research for turbulent flows; see e.g.
\cite{Rosovski_Mik} and the references therein. However, in order to focus on the main ideas of the inviscid
limit and avoid heavy tehnical computations, we choose to work with simpler $\sigma_\nu$.
Note that the rate function in this framework is described by the solution to
a deterministic  "controlled" Euler equation
\begin{equation} \label{contEuler}
\frac{\partial u(t)}{\partial t} + (u(t)\cdot\nabla)u(t)=-\nabla p+\sigma_0(t,u(t)) h(t),
\end{equation}
with the same incompressibility and boundary conditions, where $h$ denotes an
element of the Reproducing Kernel Hilbert Space (RKHS) of the noise.
This equation is a deterministic counterpart
of  the stochastic Euler equation studied
by \cite{BF} in the case
of additive noise, \cite{BP2001}  and \cite{Bes}  when the noise is multiplicative.
There is an extensive literature for the deterministic Euler
equation in dimension 2. We refer to
\cite{Bardos}, \cite{KaPo}, \cite{Yu} and the references therein and
\cite{Bardos-Titi} for a survey paper.

The technique we use is again the weak convergence approach and  will require
to prove well posedness and apriori bounds of the solution to \eqref{contEuler}
in the space
${\mathcal C}([0,T]; L^2)\cap L^\infty(0,T;H^{1,q})$ for
all $q>2$  and  for a more regular initial condition.
Thus, we are able to prove
the LDP  in a "non-optimal" space for the Navier Stokes equations with positive viscosity,
namely $L^2(0,T;{\mathcal H})$, where ${\mathcal H}$ is  a Hilbert interpolation space between
$H$ and $V$ similar to that in \cite{BesMil}.
This is due to the fact that the Euler equation has no regularizing effect on the solutions
and stronger conditions are required in order to have uniqueness of the solution;
this forces us to work with non Hilbert Sobolev spaces $H^{1,q}$ for $q\in (2,\infty)$ and to
require that the diffusion coefficient $\sigma$ is both trace class and Radonifying.
Indeed, some apriori estimates have to be obtained in general Sobolev spaces
uniformly in the "small" viscosity $\nu >0$  for the stochastic  Navier Stokes equations \eqref{Euler}
when the noise $W$ is multiplied by $\sqrt{\nu}$ and shifted by a random element
of its RKHS.

Let us finally point out that, even if the
problem solved here is similar to that in \cite{BesMil}, the final step is quite
different. Indeed, unlike all the references on LDP for hydrodynamical models,
the weak convergence is proven using a tightness argument and not by means of the convergence
in $L^2$ of a properly localized sequence. Unlike in \cite{BesMil}, no   time increment has to be studied
and no H\"older regularity of the map $\s(.,u)$ has to be imposed.  Let us also point
out that we replace the classical homogenous Dirichlet boundary conditions by the free boundary
one. Working with the classical homogeneous Dirichlet boundary condition  would lead
to some boundary layers problems that are beyond the scope of this paper.
For more details and explanations about the free boundary condition
\eqref{boundary_condition} we refer to \cite{Ziane}.  Let us also mention
that all our results can  be proved for the stochastic Navier-Stokes equations with periodic conditions.

The paper is organized as follows: In section 2 we describe the model and establish apriori
estimates in the Hilbert spaces $L^2$ and $H^{1,2}$ similar to known ones,
except for two things: the boundary conditions are slightly different, and we have
to prove estimates  uniform in a "small"
viscosity $\nu$. Section 3 deals with the inviscid problem in $C([0,T];L^2)
\cap L^\infty(0,T;H^{1,q})$. Section 4 proves apriori bouunds of the NS equations in $H^{1,q}$
and section 5 establishes the large deviations results. Finally, some technical results on Radonifying
and Nemytski's operators  are gathered in  the Appendix.

\section{Description of the model} \label{s2}

For every $\nu>0$, we
consider the equations of Navier-Stokes type
\begin{equation}\label{Navier-Stokes}
          \left\{\begin{array}{lr}
                \frac{\partial u}{\partial t} + (u\cdot\nabla) u + \nabla p =
                 \nu\Delta u + G_\nu (t,u)\frac{\partial W}{\partial t},
                 &{\rm in}\  [0,T]\times D, \\
                 \nabla\cdot u=0, &{\rm in}\  [0,T]\times D, \\
        \mbox{\rm curl } u=0\; \mbox{\rm and }  u\cdot n = 0 &{\rm on}\  [0,T]\times \partial D, \\
                 u|_{t=0}=\zeta,&{\rm in}\ D,
           \end{array}
     \right.
\end{equation}
where $\mbox{\rm curl} \, u
= D_{1}u_{2}-D_{2}u_{1}$.
\subsection{Notations and hypothesis}
Let $\mathcal{V}$ be the space of infinitely differentiable vector
fields $u$ on $D$ with compact support strictly contained in $D$,
satisfying $\nabla\cdot u=0$  in $D$ and $u.n=0$
on $\partial D$.  Let us denote by $H$ the closure of $\mathcal{V}$
in $L^{2}(D; \mathbb{R}^2)$,   that is
$$H = \left\{u\in\left[L^{2}(D)\right]^{2};\ \nabla\cdot u=0\ {\rm in}\
D,\ u\cdot n=0\ {\rm on}\ \partial D\right\}.$$ The space $H$ is a
separable Hilbert space with the inner product inherited from
$\left[L^{2}(D)\right]^{2}$, denoted in the sequel by $(.,.)$ and
$|.|_{H}$ denotes the corresponding norm.
For every integer $k\geq 0$ and any $q\in [1,\infty)$, let $W^{k,q}$ denote the completion
of the set of ${\mathcal C}^\infty_0(\bar{D},\RR)$ or of ${\mathcal C}^\infty_0(\bar{D},\RR^2)$
with respect to the norm
\[ \|u\|_{W^{k,q}}=\Big( \sum_{|\alpha|\leq k} \int_D |\partial^\alpha u(x)|^q\, dx \Big)^{\frac{1}{q}}.\]
To ease notations, let $\|.\|_q:=\|.\|_{W^{0,q}}$.
For $k<0$ and $q^*=q/(q-1)$, let $W^{-k,q^*}=(W^{k,q})^*$.
Here, for a multi-index $\alpha=(\alpha_{1},\alpha_{2})$  we set
$\displaystyle\partial^\alpha u(x)=\frac{\partial^{|\alpha|} u(x)}{\partial x_{1}^{\alpha_{1}}\partial x_{2}^{\alpha_{2}}}.$
For a non-negative real number  $s=k+r$, where $k$ is an integer and $0<r<1$, and for
any $q\in [1,\infty)$, let $W^{s,q}$ denote the completion
of the set of ${\mathcal C}^\infty_0(\bar{D},\RR)$ or of ${\mathcal C}^\infty_0(\bar{D},\RR^2)$
with respect to the norm defined by:
\[ \|u\|^{q}_{W^{s,q}}=\|u\|^{q}_{W^{k,q}}+
\sum_{|\alpha|=k}\int_{D}\int_{D}\frac{|\partial^{\alpha}u(x)-\partial^{\alpha}u(y)|^{q}}
{|x-y|^{2+2r}}dxdy.\]
Given $0<\alpha<1$,  let $W^{\alpha,p}(0,T;H)$ be the Sobolev
space of all $u\in L^{p}(0,T;H)$ such that
$$\displaystyle\int_{0}^{T}\int_{0}^{T}\frac{|u(t)-u(s)|^{p}}{|t-s|^{1+\alpha
p}}dtds<\infty.$$

Let us set $H^{k,q}=W^{k,q}\cap H$ for any $k\in [0,+\infty)$ and $q\in [2,\infty)$; the set
$H^{k,q}$ is endowed with the norm inherited from that of $W^{k,q}$ and denoted by
$\|.\|_{H^{k,p}}$. Let $V=H^{1,2}$, that is  the subspace of $H$ defined as follows:
$$V = \left\{u\in W^{1,2}(D;\RR^2): \ \nabla\cdot u=0\ {\rm in}\
D,\ u\cdot n=0\ {\rm on}\ \partial D\right\}.$$
The space $V$ is a separable Hilbert space with the inner product
$((.,))$ inherited from that of $W^{1,2}(D;\RR^2)$ and $\|.\|:= \|.\|_{V}$ denotes
the corresponding norm, defined  for $u,v\in V$ by:
\[ \|u\|^2 = ((u,u)) \, , \mbox{ \rm and } ((u,v)) = \int_D \big[ u(x) . v(x)  + \nabla u(x) . \nabla v(x) \big] dx .\]
Identifying $H$ with
its dual space $H'$, and $H'$ with the corresponding natural
subspace of the dual space $V'$, we have the  Gelfand triple
$V\subset H\subset V'$ with continuous dense injections. We denote
the dual pairing between $u\in V$ and $v\in V'$ by $\langle  u,v\rangle$.
When $v\in H$, we have $(u,v)=\langle u,v\rangle$.
Let $b(\cdot,\cdot,\cdot): V\times V\times V\longrightarrow
\mathbb{R}$ be the continuous trilinear form defined as
$$b(u,v,z)=\int_{D}(u(x)\cdot\nabla v(x))\cdot z(x)\, dx .$$
It is well known that there exists a continuous bilinear operator
$B(\cdot,\cdot): V\times V\longrightarrow V'$ such that
$\langle  B(u,v),z\rangle =b(u,v,z),\ {\rm for}\ {\rm all}\ z\in V.$
By the incompressibility condition, for $u,v, z\in V$
we have (see e.g. \cite{Lions70} or \cite{Bardos})
\begin{equation} \label{incompress}
\langle B(u,v),z\rangle=- \langle B(u,z),v\rangle \quad   \mbox{\rm and}\quad    \langle B(u,v),v\rangle =0.
\end{equation}
Furthermore, there exits a constant $C$ such that for any $u\in V$,

\begin{equation}\label{bilinear_estimate1}
\|B(u,u)\|_{V'} \leq C |u|_{H}\, \|u\|.
\end{equation}

Let $a(\cdot,\cdot): V\times V\longrightarrow {\mathbb R}$ be the
bilinear continuous form defined in \cite{Bardos} as
$$ a(u,v)= \int_{D}\nabla u\cdot\nabla v
-\int_{\partial D}k(r)u(r)\cdot v(r)dr,$$ where $k(r)$ is the
curvature of the boundary $\partial D$ at the point $r$, and we have
the following estimates (see \cite{Lions} for  details):
\begin{equation}\label{boundary_estimate}
\int_{\partial D}k(r)u(r)\cdot v(r)dr \leq C\| u\|\| v\|,
\end{equation}
and for any $\epsilon>0$ there exists  a positive constant $C(\epsilon)$ such that:
\begin{equation}\label{lions}
\int_{\partial D}k(r)|u(r)|^2 dr \leq \epsilon\| u\|^2
+C(\epsilon)|u|_{H}^2.
\end{equation}  
Moreover, we set
$D(A)=\left\{u\in  H^{2,2} \; :\; \mbox{\rm curl }
u=0\; \mbox{\rm on } \partial D \right\}$, and define the linear
operator $A:D(A)\longrightarrow H$ as
$$Au = -\Delta u, \; \mbox{\rm i.e.,}\;   a(u,v):=(A u,v).$$
On the other hand, for all $u\in D(A)$ we have
\begin{equation}\label{B(u,u)Au}
(B(u,u),Au)=0.
\end{equation}
For $\beta >0$ we will denote the $\beta$-power of the operator $A$  by $A^\beta$
and its domain by  $D(A^\beta)$. Here $D(A^{-\beta})$ denotes the
dual of $D(A^{\beta})$.   Note that for $k<3/4$, we have $H^{k,2}=D(A^{k/2})$; the proof can be found in \cite{BP2001}
Theorem 3.1.
Set $\mathcal{H}=H^{1/2,2}$ and  note that
$\mathcal{H}=D(A^{1/4})$ and $V=D(A^{1/2})$.
The continuous embedding  $V\subset \mathcal{H}\subset H$ holds.
Moreover,  $\mathcal{H}$ is an interpolation space, that is there exists a constant $a_{0}>0$ such that
\begin{equation}\label{interpolation_H}
\|u\|^{2}_{\mathcal{H}}\leq a_{0}  |u|_{H}  \|u\|,\ {\rm for}\ {\rm all}\
u\in V.
\end{equation}
Since   ${\mathcal H} \subset L^{4}(D)$
and $\langle B(u,v),w\rangle  = -\langle B(u,w), v\rangle $, we deduce
\begin{equation}\label{B(u,u)L4}
|\langle B(u,v),w\rangle |\leq C \|u\|_{\mathcal{H}}\|v\|_{\mathcal{H}}\|w\|,
\end{equation}
and $B$ can be extended as a bilinear operator from
$\mathcal{H}\times\mathcal{H}\longrightarrow V'$.

\noindent In place of equations (\ref{Navier-Stokes}) we will
consider the abstract stochastic evolution equation:
\begin{equation}\label{NS_abstract}
          du(t)+\nu Au(t)dt+B(u(t),u(t))dt=
          \sigma(t, u(t))dW(t)
\end{equation}
on the time interval $[0,T]$ with the initial condition $ u(0)=\zeta$
and $B$ satisfies
conditions \eqref{incompress}, \eqref{bilinear_estimate1}, \eqref{B(u,u)Au} and \eqref{B(u,u)L4}.

\subsection{Stochastic driving force}
Let $Q$ be a linear positive  operator in the Hilbert space $H$
which is  trace class, and hence   compact. Let $H_0 = Q^{\frac12}
H$; then $H_0$ is a Hilbert space with the scalar product
$$
(\phi, \psi)_0 = (Q^{-\frac12}\phi, Q^{-\frac12}\psi),\; \forall
\phi, \psi \in H_0,
$$
together with the induced norm $|\cdot|_0=\sqrt{(\cdot, \cdot)_0}$.
The embedding $i: H_0 \to  H$ is Hilbert-Schmidt and hence compact,
and moreover, $i \; i^* =Q$. Let $L_Q\equiv L_Q(H_0,H) $ be the
space of linear operators $S:H_0\mapsto H$ such that $SQ^{\frac12}$
is a Hilbert-Schmidt operator  from $H$ to $H$. The norm in the
space $L_Q$ is
  defined by  $|S|_{L_Q}^2 =tr (SQS^*)$,  where $S^*$ is the adjoint operator of
$S$. The $L_Q$-norm can also be  written in the form

\begin{equation}\label{LQ-norm}
|S|_{L_Q}^2=tr ([SQ^{1/2}][SQ^{1/2}]^*)=\sum_{k\geq 1} |SQ^{1/2}\psi_k|_H^2=
\sum_{k\geq 1} |[SQ^{1/2}]^*\psi_k|_H^2
\end{equation}
for any orthonormal basis  $(\psi_k)$ in $H$.
\par
Let   $(W(t), t\geq 0)$ be a   Wiener process  defined   on a filtered
probability space $(\Om, \cF, (\cF_t), \PX)$, taking values in $H$
and with covariance operator $Q$. This means that $W$ is Gaussian,
has independent time increments and that for $s,t\geq 0$, $f,g\in
H$,
\[
\EE  (W(s),f)=0\quad\mbox{and}\quad \EE  \big[ (W(s),f) (W(t),g)\big] =
\big(s\wedge t)\, (Qf,g).
\]
Let  $(\beta_j)$ be  standard (scalar) mutually independent Wiener
processes, $( e_j)$ be an  orthonormal basis in $H$ consisting of
eigen-elements of $Q$, with $Qe_j=q_je_j$. Then $W$ has  the
following  representation
\begin{equation}\label{W-n}
W(t)=\lim_{n\to\infty} W_n(t)\;\mbox{ in }\; L^2(\Om; H)\; \mbox{
with } W_n(t)=\sum_{1\leq j\leq n} q^{1/2}_j \beta_j(t) e_j,
\end{equation}
and $Trace(Q)=\sum_{j\geq 1} q_j$. For details concerning this
Wiener process see e.g.  \cite{PZ92}.\\

\noindent Let $k\geq 0$,   $q\in [2,\infty)$ and let  $R(H_{0},W^{k,q})$ denote the space of all
$\gamma$-radonifying mappings from $H_{0}$ into $W^{k,q}$, which are analogues of Hilbert-Schmidt
operators when the Hilbert Sobolev spaces $W^{k,2}$ are replaced by the more general Banach spaces
$W^{k,q}$. The
definitions and some basic properties of stochastic calculus in the
framework of special Banach spaces, including the case of non-Hilbert Sobolev spaces, can be found in
\cite{BP2001}; see also \cite{ZB}, \cite{Det}, \cite{N78} and \cite{O}.
For the sake of self-completeness, they are described
in sub-section \ref{Radon} of the Appendix. The radonifying norm $\| S\|_{R(H_{0},W^{k,q})}$ of an element
$S$ of $R(H_{0},W^{k,q})$ is defined in \eqref{normRadon} ; it is the extension of the $L_Q$ norm of $ S\in  L_Q$ which is the particular case $k=0$ and $q=2$.

\subsection{Assumptions}

Given a viscosity coefficient $\nu >0$, consider the following
stochastic Navier-Stokes equations
\begin{equation} \label{SNS}
du^\nu(t)+ \big[ \nu A u^\nu(t) + B(u^\nu(t), u^\nu(t))\big]\, dt = \sqrt{\nu}\,
\sigma_\nu(t, u^\nu(t))\, dW(t),
\end{equation}
where the noise intensity $\s_{\nu}: [0,T]\times V \to L_Q(H_0, H)$
of the stochastic perturbation is properly normalized by the square
root of the viscosity coefficient $\nu$. We assume that
  $\sigma_\nu$ satisfies the following growth and Lipschitz
conditions:
\par
\noindent \textbf{Condition (C1):} {\it For every $\nu>0$,  $\s_{\nu}\in {\mathcal
C}\big([0,T]\times V; L_Q(H_0, H)\big)$, there exist constants   $K_i, L_1 \geq 0$
such that for every $t\in [0,T]$, $\nu>0$  and $u,v\in V$:\\
{\bf (i)}   $|\s_\nu(t,u)|^2_{L_Q} \leq K_0+ K_1 |u|_{H}^2$, \\
{\bf (ii)}   $|\s_\nu(t,u)-\s_\nu(t,v)|^2_{L_Q} \leq   L_1 |u-v|_{H}^2 $.}
\smallskip

For technical reasons, in order to prove a large deviation principle
for the distribution of  the solution to \eqref{SNS} as the
viscosity coefficient $\nu$ converges to 0,
we will need some precise estimates
on the solution of the equation  deduced from \eqref{SNS} by
shifting
the Brownian $W$ by some
random element  of its RKHS.  This cannot be deduced from similar
ones  on $u$ by means of a Girsanov transformation; indeed,  the
Girsanov density is not uniformly bounded in  $L^2(P)$ when the intensity of the
noise tends to zero (see e.g. \cite{DM} or \cite{ChuMi}).
\par
To describe a set of admissible random shifts,   we introduce the
class
$\mathcal{A}$ as the  set of $H_0-$valued
$(\cF_t)-$predictable stochastic processes $h$ such that $\int_0^T
|h(s)|^2_0 ds < \infty, \; $ a.s. For fixed $M>0$, let
\[S_M=\Big\{h \in L^2(0, T; H_0): \int_0^T |h(s)|^2_0 ds \leq M\Big\}.\]
The set $S_M$, endowed with the following weak topology, is a
  Polish  (complete separable metric)  space
(see e.g. \cite{BD07}): $ d_1(h, k)=\sum_{k\geq 1} \frac1{2^k}
\big|\int_0^T \big(h(s)-k(s), \tilde{e}_k(s)\big)_0 ds \big|,$ where
$ ( \tilde{e}_k(s) , k\geq 1)$ is an  orthonormal basis for
$L^2(0, T ;  H_0)$. For $M>0$ set
\begin{equation} \label{AM}
\mathcal{A}_M=\{h\in \mathcal{A}: h(\om) \in
S_M, \; a.s.\}.
\end{equation}
In order to define the stochastic controlled equation, we introduce for
$\nu\geq 0$ a family of   intensity coefficients $\tilde{\s}_\nu$  which act
on a  random element $h\in {\mathcal A}_M$ for some $M>0$. The case
$\nu=0$ will be that of an inviscid limit "deterministic" equation
with no stochastic integral,  and which can be dealt with for fixed
$\omega$. We assume that for any $\nu\geq 0$ the coefficient
$\tilde{\sigma}_\nu$ satisfies the following condition,
similar to {\bf (C1)} and weaker since
the $L_Q$ norm is replaced by the smaller one of $L(H, H_0)$.
\medskip
\par
\noindent \textbf{Condition (C1Bis):}
{\it For any $\nu \geq 0 $,   ${\tilde \s}_\nu \in
{\mathcal C}\big([0,T]\times V; L(H_0, H)\big)$ and there exist non negative
constants  $\tilde{K}_i$ and $\tilde{L}_1$ such that
for every $t\in [0,T]$, $\nu \geq 0$ and $u,v\in V$:
\begin{align}
\label{growth-tilde}  |\tilde{\s}_\nu(t,u)|_{L(H_0,H)} \leq \tilde{K}_0 + \tilde{K}_1 |u|_H ,
\\
|\tilde{\s}_\nu(t,u) -\tilde{\s}_\nu(t,v)  |_{L(H_0,H)}
\leq \tilde{L}_1 |u-v|_H  .
\label{tilde-s-lip}
\end{align}
}
Examples of coefficients  $\sigma_\nu$ and $\tilde{\sigma}_\nu$ which satisfy conditions {\bf (C1)} and
{\bf (C1Bis)}, of Nemytski form,
are provided in subsection \ref{Nemitski_op} of the Appendix.

\par
Let $\nu>0$,  $M >0$,  $h\in {\mathcal A}_M$,  $\zeta$ be  an $H$-valued random
variable independent of $W$. Under Conditions   ({\bf
C1}) and ({\bf C1Bis}), we consider the nonlinear SPDE
\begin{align} \label{uhnu}
& d u_h^\nu(t)  + \big[ \nu\,  A u_h^\nu(t) +
B\big(u_h^\nu(t) , (u_h^\nu(t)\big) \big]\, dt = \sqrt{\nu}\, \sigma_\nu
(t,u_h^\nu(t))\, dW(t) + \tilde{\s}_\nu(t,u_h^\nu(t)) h(t)\,
dt,\nonumber \\
& u_h^\nu(0)=\zeta.
\end{align}

Well posedness of the above equation as well as apriori
bounds of the solution to this equation in ${\mathcal
C}([0,T];H)\cap L^2(0,T;V)$ are known for fixed  $\nu>0$ when $u=0$
on $\partial D$ (see e.g. \cite{SrSu} and \cite{ChuMi}. We will prove
them uniformly in $\nu\in (0,\nu_0]$ for some small $\nu_0$ under
different boundary conditions.

\par
Let us  introduce the following conditions that we will use later in the
paper.  The following conditions \textbf{(C2)} and \textbf{(C2Bis)}
will allow to improve apriori estimates on the $p$-th moment of the
solution to the  stochastic controlled equation \eqref{uhnu} in $V$,
uniformly in time and on a "small"  viscosity coefficient $\nu$.
They will also yield the existence of a solution to the inviscid
deterministic equation, that is of \eqref{uhnu} when $\nu =0$.
\par
\noindent \textbf{Condition (C2):} {\it  For every $\nu >0$,
$\s_\nu  \in {\mathcal C}\big([0,T] \times D(A); L_Q(H_0, V)\big)$
and there exist non negative constants $K_i, L_1 $
such that for every $t\in [0,T]$, $\nu>0$ and $u,v\in D(A)$:\\
{\bf (i)}  $| \mbox{\rm curl } \s_\nu(u)|^2_{L_Q} \leq K_0+ K_1 \|
u\|_V^2 $,}\\
{\bf (ii)} $|A^{1/2}\s_\nu(t,u)-A^{1/2}\s_\nu(t,v)|^2_{L_Q} \leq L_1
\|u-v\|_V^2 $.

\noindent \textbf{Condition (C2Bis):} {\it For every $\nu \geq 0$,
$\tilde{\s}_\nu \in {\mathcal C}\big([0,T]\times D(A) ; L(H_0,  V)\big)  $,  
there exist   non negative constants
$\tilde{K}_i$, $\tilde{L}_1$,
such that for every $t\in [0,T]$, $\nu\geq 0$ and $u,v\in D(A)$:\\
{\bf (iii)} $|\mbox{\rm curl }  \tilde{\s}_\nu(t,u)|_{L(H_{0},H)} \leq
\tilde{K}_0+ \tilde{K}_1\| u\|_V $,}\\
{\bf (iv)} $|A^{1/2}\tilde{\s}_\nu(t,u) -A^{1/2}\tilde{\s}_\nu(t,v)
|_{L(H_0,H)} \leq \tilde{L}_1 \|u-v\|_V .$
\par
Again, sub-section \ref{Nemitski_op} of the Appendix
provides examples of Nemytski operators  which satisfy all the conditions above.
\par
\subsection{Well Posedness and a priori estimates}
Let us mention  in this section that the results used to obtain the well posedness
of solutions are similar to known ones with different boundary conditions.
However the apriori estimates are more involved since we are seeking estimates uniform
in the parameter $\nu>0$ which  will be used later in Section 5 to let $\nu \to 0$.
Note that the results in this section would still be valid under  more general assumptions
than those stated in Conditions {\bf (C1)-(C2Bis)}, similar to that in \cite{DM} and \cite{ChuMi}.
The corresponding Nemitsky operators defining $\s_{\nu}$ and $\tilde{\s}_{\nu} $
could include some gradient of the solution multiplied by the square root
of the viscosity coefficient. However, to focus on the
main contribution of the present paper compared with previous related works, we prefer to keep
simpler and more transparent assumptions on the diffusion coefficient
$\s_\nu$ and an unrelated coefficient $\tilde{\s}_\nu$.

We at first recall  that an  $(\cF_t)$-predictable stochastic process $u^\nu_h(t,\om)$ is called a
{\em  weak
solution }  in $X\subset {\mathcal C}([0, T]; H) \cap L^2(0, T; V)$  
for the stochastic equation \eqref{uhnu}  on $[0, T]$
with initial condition $\zeta$ if
$u_h^\nu\in X$ 
a.s., and satisfies  a.s. the equality
\begin{align}\label{weak_form}
(u_{h}^{\nu}(t)  , v) & -(\zeta,v)
+\int_{0}^{t}\big[ \nu ( u_{h}^{\nu}(s),Av ) +\langle B(u_{h}^{\nu}(s),v),u_{h}^{\nu}(s)\rangle\big]ds\nonumber \\
=&\sqrt{\nu}\int_{0}^{t}(\sigma_{\nu}(s,u_{h}^{\nu}(s))dW(s),v)
+\int_{0}^{t}(\tilde{\sigma}_{\nu}(s,u_{h}^{\nu}(s))h(s),v) ds.
\end{align}
for all $v \in Dom(A)$ and all $t \in [0,T]$.
Note that this solution is a strong one in the probabilistic meaning, that is written
in terms of stochastic integrals with respect to  the given Brownian motion $W$.

\begin{Prop}\label{lemma1} Let  $T>0$, $(\sigma_\nu , \nu >0)$ and
$(\tilde{\sigma}_\nu, \nu >0)$ satisfy conditions ({\bf C1}) and ({\bf C1Bis})  respectively
and let
the initial condition $\zeta$ be such that ${\EX  } |\zeta|_{H}^{2p}<\infty$
for some $p\geq 2$. Then for any $M>0$ and $\nu_0>0$, there exist positive constants
$C_{1}(p,M)$ and $\tilde{C}_{1}(M)$ (depending also on  $T$, $\nu_0$,  $K_i, \tilde{K}_i,
i=0,1 , 2$,  such that for any $\nu \in (0,\nu_0 ]$ and
any  $h\in \mathcal{A}_{M}$,  \eqref{uhnu}  has a unique weak  solution in
${\mathcal C}([0,T];H)\cap L^2(0,T;V)$ which satisfies
the following apriori estimates:
\begin{equation}\label{uhnu_estimate1}
\sup_{0< \nu \leq \nu_0}\sup_{h\in {\mathcal A}_M}  {\EX   }\Big(\sup_{0\leq s\leq T}|u_h^\nu(s)|_{H}^{2p}\Big)
\leq C_{1}(p,M)\big[ 1+{\EX   } |\zeta|_{H}^{2p}\big],
\end{equation}
and
\begin{equation}\label{uhnu_estimate2}
\sup_{0<\nu\leq \nu_0}   \sup_{h\in {\mathcal A}_M}  \nu \int_{0}^{T}{\EX   }\big(  \|u_h^\nu(s)\|^2
+ \|u_h^\nu(s)\|_{\mathcal H}^{4}\big)ds\leq \tilde{C}_{1}(M)\big[ 1+{\EX   } |\zeta|_{H}^4\big] .
\end{equation}
\end{Prop}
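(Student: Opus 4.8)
The plan is to derive both bounds from It\^o's formula applied to $|u_h^\nu(t)|_H^{2p}$ for the abstract equation \eqref{uhnu}, and to obtain existence from a Galerkin scheme for which the very same computation yields estimates uniform in the dimension and in $\nu$, with uniqueness following from an energy estimate on the difference of two solutions. I work in the Gelfand triple $V\subset H\subset V'$; the regularity $u_h^\nu\in L^2(0,T;V)$ together with \eqref{bilinear_estimate1} gives $B(u_h^\nu,u_h^\nu)\in L^2(0,T;V')$, so the Lions--It\^o formula is licit. Writing $X(t)=|u_h^\nu(t)|_H^2$, It\^o's formula and $a(u,v)=(Au,v)$ give a drift $-2\nu a(u_h^\nu,u_h^\nu)-2\langle B(u_h^\nu,u_h^\nu),u_h^\nu\rangle+2(u_h^\nu,\tilde\s_\nu(\cdot,u_h^\nu)h)+\nu|\s_\nu(\cdot,u_h^\nu)|_{L_Q}^2$ plus the martingale $dM_0=2\sqrt\nu(u_h^\nu,\s_\nu(\cdot,u_h^\nu)\,dW)$. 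The nonlinear term vanishes by \eqref{incompress}. Choosing $\epsilon=\tfrac12$ in \eqref{lions} gives $2a(u,u)\ge\|u\|^2-C|u|_H^2$, so the viscous term yields the good dissipation $-\nu\int_0^t\|u_h^\nu\|^2\,ds$ up to a term $C\nu X$; Cauchy--Schwarz with the growth bounds of \textbf{(C1)} and \eqref{growth-tilde} controls the rest of the drift by $\alpha(s)+\beta(s)X$ with $\alpha(s)=\beta(s)=C(\nu+|h(s)|_0)$, while $d\langle M_0\rangle\le 4\nu(K_0+K_1X)X\,dt$.

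The core is to run It\^o on $X^p$ and remove the \emph{random} coefficient $\beta$ by an integrating factor. Since every $h\in\mathcal{A}_M$ satisfies $\int_0^T|h(s)|_0^2\,ds\le M$ a.s., we have the \emph{deterministic} a.s. bound $\int_0^T\beta(s)\,ds\le C(\nu_0T+\sqrt{TM})=:B_M$. Setting $\rho(t)=\exp(-\int_0^t\beta)$ and using $X^{p-1/2}\le 1+X^p$ and $X^{p-1}\le 1+X^p$ to fold all lower powers into $\alpha$ and $\beta$, one obtains
\[
\sup_{s\le t}X^p(s)\le e^{B_M}\Big[X^p(0)+B_M+\sup_{s\le t}\big|{\textstyle\int_0^s}\rho\,d\tilde M\big|\Big],
\]
where $\tilde M$ is the martingale part of $X^p$, with $d\langle\tilde M\rangle=p^2X^{2p-2}d\langle M_0\rangle\le C\nu(1+X^{2p})\,dt$. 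Burkholder--Davis--Gundy and $\rho\le1$ bound $\EE\sup_{s\le t}|\int_0^s\rho\,d\tilde M|$ by $C\sqrt\nu\,\EE(\int_0^t(1+X^{2p})\,ds)^{1/2}$; writing $\int_0^t X^{2p}\le\sup_{s\le t}X^p\int_0^t X^p$ and applying Young's inequality lets the factor $\sqrt\nu\le\sqrt{\nu_0}$ absorb $\tfrac12\EE\sup_{s\le t}X^p$ into the left side. A Gronwall argument in $t$ then gives \eqref{uhnu_estimate1}; a standard localisation by $\tau_N=\inf\{t:|u_h^\nu(t)|_H\ge N\}$ makes $\tilde M$ a genuine martingale before letting $N\to\infty$ by Fatou.

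For \eqref{uhnu_estimate2} I retain the dissipation. Keeping $p\nu\int_0^t X^{p-1}\|u_h^\nu\|^2\,ds$ on the left of the integrated It\^o identity and bounding the right-hand side as above (invoking \eqref{uhnu_estimate1} for the $\beta X^p$ contribution), the cases $p=1$ and $p=2$ give $\nu\EE\int_0^T\|u_h^\nu\|^2\,ds\le C(M)[1+\EE|\zeta|_H^2]$ and $\nu\EE\int_0^T|u_h^\nu|_H^2\|u_h^\nu\|^2\,ds\le C(M)[1+\EE|\zeta|_H^4]$; the interpolation inequality \eqref{interpolation_H}, in the form $\|u\|_{\mathcal H}^4\le a_0^2|u|_H^2\|u\|^2$, converts the second into the required bound on $\nu\EE\int_0^T\|u_h^\nu\|_{\mathcal H}^4\,ds$. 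Existence then follows by Galerkin approximation in the eigenbasis of $A$: the projected system is a finite-dimensional SDE with locally Lipschitz coefficients, globally solvable by the above (dimension-uniform) bounds, which furnish weak-$*$ compactness in $L^2(\Omega;L^\infty(0,T;H))$ and weak compactness in $L^2(\Omega\times(0,T);V)$, with the usual passage to the limit in $B$. For uniqueness, set $w=u_1-u_2$: since $\langle B(u_2,w),w\rangle=0$ and, by the antisymmetry in \eqref{incompress} with \eqref{B(u,u)L4} and \eqref{interpolation_H}, $|\langle B(w,u_1),w\rangle|\le C\|u_1\|_{\mathcal H}|w|_H^{1/2}\|w\|^{3/2}\le\tfrac\nu2\|w\|^2+C(\nu)\|u_1\|_{\mathcal H}^4|w|_H^2$, the viscous dissipation absorbs the $\tfrac\nu2\|w\|^2$, and It\^o on $|w|_H^2$ together with the Lipschitz bounds \eqref{tilde-s-lip} and \textbf{(C1)}(ii) gives $d|w|_H^2\le G(s)|w|_H^2\,ds+d(\mathrm{mart})$ with $\int_0^T G<\infty$ a.s. (using \eqref{uhnu_estimate2} and $\int_0^T|h|_0\,ds\le\sqrt{TM}$); the integrating-factor Gronwall forces $w\equiv0$.

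I expect the main obstacle to be precisely the two features distinguishing this from the fixed-viscosity literature. The uniformity in $\nu\in(0,\nu_0]$ is delicate because the dissipation degenerates as $\nu\to0$; it works only because the noise is scaled by $\sqrt\nu$, so the It\^o-correction and BDG contributions carry a factor $\nu\le\nu_0$ that can be absorbed while keeping all constants independent of $\nu$. The random control $h$ is the second difficulty, since it produces an $H$-valued, merely $L^2$-in-time forcing coupled to the unknown; the resolution is to exploit the a.s. bound $\int_0^T|h|_0\,ds\le\sqrt{TM}$ so that the Gronwall integrating factor $e^{B_M}$ stays deterministic and yields constants depending only on $M$, $\nu_0$, $T$ and the structural constants.
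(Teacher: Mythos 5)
Your proposal is correct and follows essentially the same route as the paper: It\^o's formula on $|u_h^\nu|_H^{2p}$ for a localized Galerkin approximation, the incompressibility identity to kill the nonlinearity, \eqref{lions} for the boundary term, the deterministic a.s.\ bound $\int_0^T|h(s)|_0\,ds\le\sqrt{MT}$ to control the random forcing, BDG with the $\sqrt{\nu}$ scaling absorbed by Young's inequality, and the interpolation \eqref{interpolation_H} at $p=2$ for the $\mathcal H^4$ bound. The only (cosmetic) difference is that you re-derive the stochastic Gronwall step by an explicit integrating factor where the paper invokes Lemma A.1 of \cite{ChuMi}, and you supply the uniqueness argument that the paper declares standard and omits.
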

\begin{proof}
The proof, which is quite  classical, requires some Galerkin approximation  of $u^\nu_h$, say $u^{\nu,n}_h$, for which
apriori estimates are proved uniformly in $n$.  Note that in our situation, these apriori estimates have
to be obtained  uniformly in $\nu \in (0,\nu_0]$ and $h\in {\mathcal A}_M$. 
Using a subsequence of  $(u^{\nu,n}_h, n\geq 1)$ which converges in the weak or the weak-star
topologies of appropriate spaces,
one can then  prove that there exists a solution to
\eqref{uhnu} (see e.g \cite{ChuMi} or \cite{SrSu}). The proof of the uniqueness is standard and omitted.
To ease notation, we  replace the Galerkin approximation
by the limit process $u^\nu_h$ to obtain the required apriori estimates uniformly in $n\geq 1$ and in
$\nu \in (0,\nu_0]$ for some $\nu_0>0$  under slightly more general boundary
conditions; the proof can then be completed as in the appendix of \cite{ChuMi}.
If the well-posedeness is already known, we use the solution $u^\nu_h$ instead of the
Galerkin approximation.
Let $\nu>0$, $h\in {\mathcal A}_M$; for  every $N>0$, let
$ \tau_{N}=\inf\left\{t\geq 0 ,\ \ |u_h^\nu(t)|_{H}\geq N\right\}\wedge T$.

Applying  It\^{o}'s formula first to $|.|_{H}^2$ and the process $u_h^\nu(.\wedge \tau_N)$,
then to the map $x\mapsto x^p$ for
$p\geq 2$ and the process $|u^\nu_h(.\wedge \tau_N)|_{H}^2$,
we deduce:
\begin{equation}\label{ItoH}
|u_h^\nu(t\wedge\tau_{N})|_{H}^{2p}+\nu 2 p\int_{0}^{t\wedge\tau_{N}}|u_h^\nu(s)|_{H}^{2p-2}
\| u^\nu_h(s)\|^2 \, ds\leq |u_h^\nu(0)|_{H}^{2p} + J(t) + \sum_{i=1}^5 T_i(t),
\end{equation}
where
\begin{eqnarray*}
J(t)&=&2p \sqrt{\nu}
\int_{0}^{t\wedge\tau_{N}}
|u_h^\nu(s)|_{H}^{2p-2} \, (\sigma_\nu(s,u_h^\nu(s))dW(s),u_h^\nu(s)) ,\\
T_{1}(t)&=&2 p \nu \int_{0}^{t\wedge\tau_{N}}|u_h^\nu(s)|_{H}^{2p-2}\int_{\partial
D}k(r)|u_h^\nu(r)|_{H}^{2}drds,\\
T_{2}(t)&=& 2p \int_{0}^{t\wedge\tau_{N}}|u_h^\nu(s)|_{H}^{2p-2} \langle B(u^\nu_h(s),u^\nu_h(s)) , u^\nu_h(s)\rangle
ds,\\
T_{3}(t)&=&2p \int_{0}^{t\wedge\tau_{N}}|u_h^\nu(s)|_{H}^{2p-2}
\big( \tilde{\s}_\nu(s,u_h^\nu(s))h(s),u_h^\nu(s) \big)ds,\\
T_{4}(t)&=&{\nu}p\int_{0}^{t\wedge\tau_{N}}|u_h^\nu(s)|_{H}^{2p-2}|\s_\nu(s,u_h^\nu(s))|^2_{L_Q}ds,\\
T_{5}(t)&=&2 {\nu}p(p-1)\int_{0}^{t\wedge\tau_{N}}|\s_\nu^*(s,u_h^\nu(s)) u_h^\nu(s)|_0^2\,
|u_h^\nu(s)|_{H}^{2(p-2)}ds . \\
\end{eqnarray*}
The incompressibility condition \eqref{incompress} implies that $T_2(t)=0$ for any $t\in [0,T]$.
Using \eqref{lions}, we deduce that for any $\epsilon >0$ there exists a constant $C(\epsilon)$ such that
\begin{equation*}
T_{1}(t)\leq 2 \nu p\epsilon\int_{0}^{t\wedge\tau_{N}}|u_h^\nu(s)|_{H}^{2p-2}
\|u^\nu_h(s)\|^2 ds+ 2 \nu p\, C(\epsilon)\int_{0}^{t\wedge\tau_{N}}|u_h^\nu(s)|_{H}^{2p}ds.
\end{equation*}
Since $h\in {\mathcal A}_M$, the growth condition \eqref{growth-tilde},
the Cauchy-Schwarz and H\"older inequalities
imply:
\begin{align*}
T_{3}(t)  &
\leq
2p \int_{0}^{t\wedge\tau_{N}}\Big[ {\tilde{K}_0} + \Big({{\tilde K}_0 } + {{\tilde K}_1}\Big) |u^\nu_h(s)|_{H}^{2p}
\Big] |h(s)|_0 ds\\
&\;\leq 2p {\tilde{K}_0} \sqrt{M T} + 2p  \Big({{\tilde K}_0 } + {{\tilde K}_1}\Big) \int_0^{t\wedge \tau_N}
|u^\nu_h(s)|_{H}^{2p} |h(s)|_0 ds .
\end{align*}
Using the growth condition {\bf (C1)}, we deduce for $\nu \in (0,\nu_0]$:
\[  
T_4(t) + T_5(t) \leq \nu p (2p-1) K_0 T + \nu p (2p-1) (K_0+K_1)
\int_0^{t\wedge \tau_N}   |u^\nu_h(s)|_{H}^{2p} ds.
\]
Thus, the It\^o formula \eqref{ItoH} and the previous upper estimates of $T_i(t)$, $i=1, \cdots, 5$,
imply that for any $t\in [0,T]$,    $\epsilon\in (0,1)$,  
\begin{align}\label{ItoHbis}
& |u^\nu_h(t\wedge \tau_N )|_{H}^{2p}
+ 2 \nu p \big( 1 - \epsilon
\big) \int_0^{t\wedge \tau_N}
|u^\nu_h(s)|_{H}^{2p-2} \, \|u^\nu_h(s)\|^2 \, ds \nonumber \\
&\qquad  \leq \tilde{Z} + \int_0^t \tilde{\varphi}(s)  |u(s\wedge \tau_N)|_{H}^{2p} ds + J(t),
\end{align}
where
\begin{align*}
\tilde{Z}&= |\zeta|_{H}^{2p} + 2 p \tilde{K}_0 \sqrt{ M T} + p(2p-1) \nu K_0 T, \\
\tilde{\varphi}(s)&=p \Big[ 2\nu   C(\epsilon)  + (2p-1) \nu (K_0+ K_1)
+ 2 \Big( {{\tilde K}_0 }
  + {{\tilde K}_1}\Big) |h(s)|_0
\Big] .
\end{align*}
For $t\in [0,T]$, set
\[ X(t):=\sup_{0\leq s\leq t} |u^\nu_h(s\wedge \tau_N)|_{H}^{2p},\;
Y(t):=\int_0^{t\wedge \tau_N} |u^\nu_h(s)|_{H}^{2p-2}\|u^\nu_h(s)\|^2 ds ,\;
\tilde{I}(t):=\sup_{0\leq s\leq t} J(s).
\]
Let $\e =\frac{1}{2}$, $\nu \in (0, \nu_0]$, $\lambda \in (0,1)$ and $\tilde{\alpha}= (1-\lambda) \nu p$.
With these notations, the inequality \eqref{ItoHbis} yields
\begin{equation} \label{ItoH3}
\lambda X(t) + (1-\lambda) |u^\nu_h(t\wedge \tau_N)|_{H}^{2p} + \tilde{\alpha} Y(t)
\leq \tilde{Z} + \int_0^t \tilde{\varphi}(s) X(s)\, ds + \tilde{I}(t).
\end{equation}
Furthermore, using the Burkholder-Davis-Gundy inequality, condition {\bf (C1)}, then Cauchy-Schwarz's and
Young's inequalities, we deduce that for any $\beta >0$,
\begin{align*}
  {\EX   } \tilde{I}(t)& \leq
6 \sqrt{\nu} p {\EX   } \left( X(t)
\int_{0}^{t\wedge\tau_{N}}\!\!\!\!  \big[ K_{0}
+( K_0+K_1) |u_h^\nu(s)|_{H}^{2p}
\big] ds\right)^{1/2}\\
&\leq \tilde{\beta} {\EX   } X(t) + \tilde{\gamma} {\EX} \int_0^t X(s) ds
+ \bar{C},
\end{align*}
where
$\tilde{\gamma} = \frac{9\nu p^2}{ \tilde{\beta}}(K_0+K_1)$
and $ \bar{C}=
\frac{9\nu p^2}{\tilde{\beta}} K_0 T$.  
Let $\lambda = \frac{1}{2}$,  $ \varphi = 2 \tilde{\varphi}$,
$\alpha=2 \tilde{\alpha}$,  $\beta=2 \tilde{\beta}$, $\gamma=2 \tilde{\gamma}$ and $I(t)=\tilde{I}(t)$.
Then for $t\in [0,T]$, we have a.s.
\[  X(t) + {\alpha} Y(t) \leq  2 \tilde{Z}  
+ \int_0^t {\varphi}(s) X(s)\, ds + {I}(t), \quad   {\EX   } {I}(t)\leq
{\beta} {\EX   } X(t) + {\gamma} {\EX} \int_0^t X(s) ds + 2 \bar{C}.  \]
Furthermore, for $\nu \in (0,\nu_0]$  and $h \in {\mathcal A}_M$, one has a.s.  $\int_0^T \varphi(s) ds \leq \Phi(M, \nu_0)$,
where   $\Phi(M,\nu_0) = 4p (\tilde{K}_0 + \tilde{K}_1) \sqrt{MT} + 2p \nu_0 \big[ 2 C(\frac{1}{2} ) + (2p-1) (K_0+K_1)\big]$.
Let $\tilde{\beta} >0$ be such that $4\tilde{\beta} \exp(\Phi(M,\nu_0)) \leq 1$.  Then
since $X(.)$ is bounded by $N$,   Lemma A.1 in \cite{ChuMi}  (see also Lemma  3.9 in \cite{DM})
implies that for
$t\in [0,T]$, we have:
\[ {\EX   } \big[  X(t)+\alpha Y(t) \big] \leq C({\EX}|\zeta|_{H}^{2p}
,\nu_0, M,T),
\]
for some constant $C({\EX   }|\zeta|_{H}^{2p},
\nu_0, M,T)$ which does not depend on
$N$, $\nu\in (0,\nu_0]$, $h\in {\mathcal A}_M$
and on the step $n$ of the Galerkin approximation.
Since the right handside in the above equation does not depend on $N$, letting $N\to \infty$ we obtain that
$\tau_N\to T$ a.s. Hence there exists a constant $C_1:=C_1(\EX|\zeta|_H^{2p},\nu_0,M,T)$ such that
the Galerkin approximation $u^{n,\nu}_h$ of $u^\nu_h$ satisfies:
\[\sup_{n\geq 1} {\EX   } \Big( \sup_{0\leq t\leq T} |u^{n,\nu}_h(t)|_{H}^{2p} + \nu
\int_0^T \big[ \|u^{n,\nu}_h(t)\|_{\mathcal H}^4 +
\|u^{n,\nu}_h(s)\|^2 \big]  ds \Big) \leq C_1
\]
for any $n$,  $\nu \in (0,\nu_0]$ and  $h\in  {\mathcal A}_M$.
The proof is completed using a classical argument (see e.g. the Appendix of \cite{ChuMi} for details.)
\end{proof}

\begin{Prop}\label{lemma2} Let  the assumptions of  Proposition
\ref{lemma1} be  satisfied for $p=1$ or some $p\in [2,\infty)$. Moreover, assume that the initial
condition $\zeta$ is such that $E\|\zeta\|^{2p}<\infty$ and
that $(\sigma_\nu , \nu >0)$ and $(\tilde{\sigma}_\nu , \nu >0)$ satisfy  respectively conditions {\bf (C2)} and
{\bf (C2Bis)}.  Then
given  any $M>0$, there exists  a positive constant
$C_{2}(p,M)$  such that for
  $\nu\in (0, \nu_{0}]$ and $h\in \mathcal{A}_{M}$,
the solution to \eqref{uhnu} satisfies:
\begin{equation}\label{uhnu_estimate3}
{\EX   }\Big( \sup_{0\leq t\leq T} \|u_h^\nu(t)\|^{2p} + \nu \int_0^T
| A u^\nu_h(s)|_{H}^{2}   \;  ds
\Big) \leq C_{2}(p,M) \big( 1+ {\EX   } \|\zeta\|^{2p}\big) .
\end{equation}
\end{Prop}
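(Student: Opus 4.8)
The plan is to run an argument parallel to the proof of Proposition \ref{lemma1}, but now at the level of the $V$-norm, exploiting the stronger cancellation \eqref{B(u,u)Au} in place of the weaker \eqref{incompress}. The natural functional is the enstrophy $\phi(u):=|A^{1/2}u|_H^2$. Using the free boundary condition together with the divergence-free constraint one has the polarized div-curl identity $(Au,v)=(\mbox{\rm curl }u,\mbox{\rm curl }v)$ for $u\in D(A)$, $v\in V$ (this is exactly what the curvature term in $a(\cdot,\cdot)$ encodes), whence $\phi(u)=\|\mbox{\rm curl }u\|_2^2$ and, by \eqref{lions} with $\epsilon=\frac12$, $\|u\|^2\leq C(\phi(u)+|u|_H^2)$. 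Consequently $|A^{1/2}\sigma_\nu|_{L_Q}=|\mbox{\rm curl }\sigma_\nu|_{L_Q}$, so that Conditions {\bf (C2)(i)} and {\bf (C2Bis)(iii)} control precisely the curl of the diffusion and drift coefficients that appear below. It therefore suffices to bound $\mathbb{E}\sup_t\phi(u_h^\nu(t))^p$ and the dissipation, and then combine with the $H$-bounds \eqref{uhnu_estimate1} of Proposition \ref{lemma1} to recover $\|u_h^\nu\|^{2p}$ via $\|u\|^{2p}\lesssim\phi^p+|u|_H^{2p}$.

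First I would introduce the stopping times $\tau_N=\inf\{t:\|u_h^\nu(t)\|\geq N\}\wedge T$ and apply It\^o's formula to $\phi(u_h^\nu(\cdot\wedge\tau_N))$ and then to $x\mapsto x^p$, working on the Galerkin approximations as in Proposition \ref{lemma1}. Formally testing \eqref{uhnu} against $Au_h^\nu$, the viscous term yields the favourable dissipation $-2\nu|Au_h^\nu|_H^2$, the nonlinear term \emph{drops out} by $(B(u,u),Au)=0$, the It\^o correction equals $2\nu|\mbox{\rm curl }\sigma_\nu|_{L_Q}^2$, and the control term contributes $2(\mbox{\rm curl }u_h^\nu,\mbox{\rm curl}(\tilde\sigma_\nu h))$. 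Crucially the three noise-generated terms all carry a factor $\nu$ (or $\sqrt\nu$ in the martingale), so after {\bf (C2)(i)} they are dominated by $\nu(K_0+K_1\|u\|^2)$-type expressions; bounding $\|u\|^2\lesssim\phi+|u|_H^2$ and using $\nu\leq\nu_0$ keeps every coefficient uniformly bounded, which is what makes the final constant independent of $\nu$. Note that none of these terms is of the form $|Au|_H^2$, so the clean dissipation $-2p\nu\phi^{p-1}|Au_h^\nu|_H^2$ produced by the power $p$ is never consumed.

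The control term is estimated by Cauchy--Schwarz and {\bf (C2Bis)(iii)} by $\phi^{1/2}(\tilde K_0+\tilde K_1\|u\|)|h|_0$, producing after Young's inequality and $\|u\|^2\lesssim\phi+|u|_H^2$ a Gronwall weight of the form $\varphi(s)\phi^p$ with $\int_0^T\varphi(s)\,ds$ bounded by a constant depending only on $M$ and $\nu_0$ (here $\int_0^T|h|_0\,ds\leq\sqrt{MT}$ for $h\in\mathcal{A}_M$), together with lower-order contributions involving $|u|_H^{2p}$ that are handled by \eqref{uhnu_estimate1}. Setting $X(t)=\sup_{s\leq t}\phi(u(s\wedge\tau_N))^p$ and $Y(t)=\int_0^{t\wedge\tau_N}\phi^{p-1}|Au|_H^2\,ds$, and treating the stochastic integral through the Burkholder--Davis--Gundy and Young inequalities exactly as for $\tilde I(t)$ in Proposition \ref{lemma1}, I obtain an inequality $X(t)+c\nu Y(t)\leq Z+\int_0^t\varphi(s)X(s)\,ds+I(t)$ with $\mathbb{E}Z\lesssim 1+\mathbb{E}\|\zeta\|^{2p}$ and $\mathbb{E}I(t)\leq\beta\,\mathbb{E}X(t)+\gamma\,\mathbb{E}\int_0^tX(s)\,ds+\bar C$. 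The stochastic Gronwall lemma (Lemma A.1 of \cite{ChuMi}) then gives $\mathbb{E}[X(T)+\nu Y(T)]\leq C_2(p,M)(1+\mathbb{E}\|\zeta\|^{2p})$ uniformly in $N$; letting $N\to\infty$ and invoking \eqref{uhnu_estimate1} yields the bound on $\mathbb{E}\sup_t\|u_h^\nu\|^{2p}$, while the stated dissipation $\nu\,\mathbb{E}\int_0^T|Au_h^\nu|_H^2\,ds$ is read off the $p=1$ instance $\nu Y(T)$ (whose hypotheses follow from those assumed for the given $p$).

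I expect the main obstacle to be the rigorous justification of the It\^o computation at the $D(A)$ level, i.e. establishing the cancellation $(B(u^n,u^n),Au^n)=0$ and the curvature identity $(Au,u)=\|\mbox{\rm curl }u\|_2^2$ on the Galerkin scheme under the free boundary condition, and checking that all boundary contributions either vanish or are absorbed via \eqref{lions}; everything else mirrors Proposition \ref{lemma1}. The secondary delicate point is the bookkeeping that keeps the constants uniform as $\nu\to 0$, which rests on the fact that the only $O(1)$ term is the control-driven drift, whose $\|u\|^2$-dependence enters linearly against the time-integrable weight $|h|_0$.
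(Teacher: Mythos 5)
Your proposal is correct and takes essentially the same route as the paper: the paper applies the curl to \eqref{uhnu}, runs It\^o's formula on $|\xi_h^\nu|_{H}^{2p}$ for $\xi_h^\nu=\mbox{\rm curl }u_h^\nu$ (using \eqref{curlBnul} for the cancellation of the nonlinear term and the elliptic identity $|\Delta u_h^\nu|_H^2=|\nabla\xi_h^\nu|_H^2$ to recover the $|Au_h^\nu|_H^2$ dissipation), and concludes via \eqref{normcurl}, the bounds of Proposition \ref{lemma1} and Lemma A.1 of \cite{ChuMi} --- which is exactly your enstrophy computation, written on the vorticity equation instead of on the functional $|A^{1/2}u|_H^2$. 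The ``main obstacle'' you anticipate (the cancellation against $Au$ and the curvature/div--curl identity on the Galerkin scheme) is handled in the paper by the already-recorded facts \eqref{B(u,u)Au} and \eqref{curlBnul}, so nothing further is needed.
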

\begin{proof}
Let $\xi_{h}^{\nu}=\mbox{\rm curl } u_{h}^{\nu}$, then  it is a classical result that $u_{h}^{\nu}$ is
solution of the following elliptic problem   (see e.g.  \cite{Bes} and the references therein),
\begin{equation}\label{elliptic}
          \left\{\begin{array}{ll}
          -\Delta u_{h}^{\nu} = \nabla^{\bot}\xi_{h}^{\nu} & {\rm in}\ D,\\
           u_{h}^{\nu}\cdot n =  \xi_{h}^{\nu} = 0  & {\rm on}\ \partial D,
           \end{array}
   \right.
\end{equation}
where $\nabla^{\bot}=(D_{2}, -D_{1})$.
Using the equation   $\eqref{elliptic}$,  we get that
\begin{align*}
-( \Delta u_{h}^{\nu},\Delta u_{h}^{\nu})  &= (\nabla^{\bot}\xi_{h}^{\nu},\Delta u_{h}^{\nu} )     =
-( \nabla^{\bot}\xi_{h}^{\nu},\nabla^{\bot}\xi_{h}^{\nu} ) .
\end{align*}
Hence
\begin{align*}
|\Delta u_{h}^{\nu}|_H^{2}=|\nabla^{\bot}\xi_{h}^{\nu}|^{2}_{H}
=\|D_{2}\xi_{h}^{\nu}\|^{2}_{L^{2}(D)}+\|D_{1}\xi_{h}^{\nu}\|^{2}_{L^{2}(D)}
=|\nabla\xi_{h}^{\nu}|_H^{2}.
\end{align*}
Using \eqref{normcurl}  we see that the proof of \eqref{uhnu_estimate3} reduces to check  that
there exists a constant
$C(M, T,{K}_i,\tilde{K}_i):=C_{3}$ such that
for any $\nu\in (0,\nu_0]$ and $h\in {\mathcal A}_M$,
\begin{equation}\label{xihnu_p}
{\EX   } \Big(\sup_{0\leq t\leq T}  |\xi_{h}^{\nu}(t)|_{H}^{2p}
+ \nu \int_0^T |\nabla \xi^\nu_h (s)|_{H}^2 ds
\Big)\leq C_3 (1+{\EX   } |\mbox{ \rm curl } \zeta|_{H}^{2p}).
\end{equation}
We at first prove this inequality for the Galerkin approximation of the solution ; a standard argument
extends it to $u^\nu_h$ and hence $\xi_h^\nu$.
Fix $N>0$ and set $\bar{\tau}_N = \inf\{ t\geq 0 \, :\, |\xi_h^\nu(t)|_{H}\geq N\}\wedge T$.
Applying the {\rm curl} to the evolution equation \eqref{uhnu} yields $\xi_h^\nu(0)=\mbox{\rm curl }\zeta$ and
\begin{align} \label{xihnu}
d\xi_{h}^{\nu}(t) +  \nu A \xi_{h}^{\nu}(t)dt +  &
\mbox{\rm curl } B(u_{h}^{\nu}(t),u_{h}^{\nu}(t)) dt=  \nonumber \\
& \sqrt{\nu}\,
\mbox{\rm curl } \sigma_\nu
(s,u_h^\nu(t))\, dW(t) +
\mbox{\rm curl } \tilde{\s}_\nu (s,u_h^\nu(t))
h(t)\,dt .
\end{align}
Recall that equation \eqref{curlBnul} with $q=2$,
implies  $(\,  \mbox{\rm curl } B(u^\nu_h,u^\nu_h) , \xi_h^\nu) =0$
for $u\in D(A)$.
Using It\^o's formula for the square
of the $H$ norm, and then  for the map $x\to |x|_{H}^p$ with  $p\in [2,\infty)$,
we obtain  for $t\in [0,T]$:
\begin{equation}\label{Ito_xinnu}
|\xi_{h}^{\nu}(s\wedge \bar{\tau}_N)|_{H}^{2p} + 2p \nu \int_0^{t \wedge \bar{\tau}_N} \!\!
|\nabla \xi_h^\nu(s)|_{H}^2 \, |\xi_h^\nu(s)|_{H}^{2p-2} ds = |\mbox{\rm curl }\zeta|_{H}^{2p} + \bar{J}(t) + \sum_{i=1}^3\bar{T}_i(t),
\end{equation}
where
\begin{eqnarray*}
\bar{J}(t)&=& 2p \sqrt{\nu} \int_0^{t\wedge \bar{\tau}_N} |\xi_h^\nu(s)|_{H}^{2p-2}\;
\big( \mbox{\rm curl } \sigma_\nu(s,u^\nu_h(s)) dW(s)\, ,\,
\xi_h^\nu(s)\big) , \\
\bar{T}_1(t)&=&   2p \int_0^{t\wedge \bar{\tau}_N} |\xi_h^\nu(s)|_{H}^{2p-2}\;
\big( \mbox{\rm curl } \tilde{\sigma}_\nu(s,u^\nu_h(s)) h(s)\, ,\,
\xi_h^\nu(s)\big) \, ds , \\
\bar{T}_2(t)&=& \nu p \int_0^{t\wedge \bar{\tau}_N} |\xi_h^\nu(s)|_{H}^{2p-2} \;  |
\mbox{\rm curl } \sigma_\nu(s,u^\nu_h(s))|^2_{L_Q} \, ds,  \\
\bar{T}_3(t)&=& 2\nu p (p-1)  \int_0^{t\wedge \bar{\tau}_N} |\xi_h^\nu(s)|_{H}^{2p-4} \;
\big|  \big( \mbox{\rm curl } \sigma_\nu(s,u^\nu_h(s)) \big) ^* \xi_h^\nu(s) \big|^2_{H_0}  \, ds.
\end{eqnarray*}
Using the Cauchy-Schwarz inequality,   ({\bf C2Bis}) and \eqref{normcurl} with $q=2$, we get
that
\begin{align*}
\bar{T}_1(t)&\leq  2p  \int_{0}^{t\wedge{\bar \tau}_{N}}  |\xi_{h}^{\nu}(s)|_{H}^{2p-1}\, |\mbox{\rm curl }
\tilde{\s}_\nu(s,u)|_{L(H_{0},H)}\, |h(s)|_{0}\, ds\\
&\leq 2p  \int_{0}^{t\wedge {\bar \tau}_{N}} \Big[ {\tilde{K}_0} + \Big(  {\tilde{K}_0} + 2\, C {\tilde{K}_1} \Big)
|\xi_h^\nu(s)|_{H}^{2p}
+ {\tilde{K}_1} |u^\nu_h(s)|_{H} |\xi_h^\nu|_{H}^{2p-1}
\Big]
\, |h(s)|_{0}\, ds.
\end{align*}
Using Cauchy-Schwarz's, H\"older's and Young's inequalities, we deduce:
\begin{align*}
\bar{T}_1(t)&\leq
2p \tilde{K}_0 \sqrt{ M T} +  \tilde{K}_1^{2p}\sqrt{ MT} \, \sup_{0\leq s\leq T} |u^\nu_h(s)|_{H}^{2p} +
\int_{0}^{t\wedge{\bar \tau}_{N}} \psi_1(s) |\xi_h^\nu(s)|_{H}^{2p} ds ,  
\end{align*}
where
$\psi_1(s):= 2p\Big( {\tilde{K}_0} +2\,  C { \tilde{K}_1}\Big) + (2p-1)|h(s)|_0 $.

\noindent Furthermore, $\bar{T}_3(t)$ can be upper estimated in terms of  $\bar{T}_2(t)$ as follows:
\[ \bar{T}_3(t)\leq 2\nu p(p-1) \int_{0}^{t\wedge{\bar \tau}_{N}}  |\mbox{\rm curl } \sigma_\nu(u^\nu_h(s))|^2_{L_Q} |\xi_h^\nu(s)|_{H}^{2p-2}\, ds =
2(p-1) \bar{T}_2(t).\]
Finally, condition {\bf (C2)}, \eqref{normcurl}  with $q=2$, H\"older's and Young's inequalities, we obtain for $\nu\in (0,1]$:
\begin{align*}
\bar{T}_2(t) + \bar{T}_3(t) & \leq
\nu p (2p-1) \! \int_{0}^{t\wedge{\bar \tau}_{N}}\!\!\!
|\xi_h^\nu(s)|_{H}^{2p-2} \Big[ K_0 + K_1\big( |u^\nu_h(s)|_{H}^2 + 4  C^2
|\xi_h^\nu(s)|_{H}^2\big) \Big]\, ds\\
& \leq   \nu  (2p-1) T \Big[p K_0  + {K_1}  \sup_{0\leq s\leq T} |u_h^\nu(s)|_{H}^{2p} \Big]
+  \int_{0}^{t\wedge{\bar \tau}_{N}} \nu \, \psi_2 \, |\xi_h^\nu(s)|_{H}^{2p} \, ds,  
\end{align*}
where
$\psi_2 = (2p-1) \big[ p(K_0+ 4 K_1 C^2) + (p-1) K_1\big]$.
Let
\[ \bar{X}(t)=\sup_{0\leq s\leq t} |\xi_h^\nu(s\wedge \bar{\tau}_N)|_{H}^ {2p}\, ,
\quad  \bar{Y}(t)=\int_0^{t\wedge \bar{\tau}_N }\!\!
|\xi_h^\nu(s)|_{H}^{2p-2}\, |\nabla \xi_h^\nu(s)|_{H}^2\, ds. \]
Then for $\bar{\alpha}=2\nu p (1-\bar{\lambda})$,   $\bar{I}(t) = \sup_{0\leq s\leq t}|\bar{J}(s)|$, $h\in {\mathcal A}_M$
and
\begin{align*}
\bar{Z} &:=  |\mbox{\rm curl } \zeta|_{H}^{2p} + 2p\tilde{K}_0 \sqrt{MT}
+  \nu p (2p-1) T K_0  \\
&\quad + \big( \tilde{K}_1^{2p} \sqrt{ MT}  + \nu  (2p-1) T K_1\big) \sup_{0\leq s\leq T}
|u^\nu_h(s)|_{H}^{2p},
\end{align*}
equation \eqref{Ito_xinnu} and the upper bounds of $\bar{T}_i(t)$
imply that for
$t\in [0,T]$ and $\nu \in (0, \nu_0]$,  
\begin{equation}\label{majocurl2}
\bar{\lambda}  \bar{X}(t) + \bar{\alpha}  \bar{Y}(t) + (1-\bar{\lambda}) |\xi_h^\nu(s\wedge \bar{\tau}_N)|_{H}^{2p}
\leq \bar{Z} + \int_0^t \big[ \psi_1(s)+\nu \psi_2\big]  \bar{X}(s) ds + \bar{I}(t),
\end{equation}

\noindent The Davies inequality, condition {\bf (C2)}, \eqref{normcurl} for $q=2$,
Cauchy-Schwarz's, H\"older's  and Young's inequalities  imply  that for any $\bar{\beta} >0$,
\begin{align}
{\EX   }  \bar{I}(t)&\leq  6 p \sqrt{ \nu}  {\EX } \Big(  \int_{0}^{t\wedge{\bar \tau}_{N}}
|\xi_h^\nu(s)|_{H}^{4p-2} \; |\mbox{\rm curl }\sigma_\nu(u^\nu_h(s)|^2_{L_Q}\,
ds \Big)^{\frac{1}{2}} \nonumber \\
&\leq \bar{\beta}  {\EX   } \bar{X}(t) + \frac{9p^2 \nu}{\bar{\beta}} K_0 T
+    \frac{9p^2 \nu}{\bar{\beta}} (K_0 + 4 C^2 K_1) {\EX   } \int_0^{t\wedge \bar{\tau}_N}
|\xi_h^\nu(s)|_{H}^{2p} ds
\nonumber \\
&\qquad \qquad +  \frac{9p^2 \nu}{\bar{\beta}} K_1
\Big(  {\EX   } \int_0^{t\wedge \bar{\tau_N}} |\xi_h^\nu(s)|_{H}^{2p} ds \Big)^{\frac{p-1}{p}}
\Big(T  {\EX   } \sup_{0\leq s\leq T} |u^\nu_h(s)|_{H}^{2p}\Big)^{\frac{1}{p}} \nonumber \\
&\leq  \bar{\beta}  {\EX   } \bar{X} (t) + \bar{\gamma} {\EX }  \int_0^{t\wedge{\bar \tau}_N}
\bar{X}(s) ds  + \tilde{Z},
\end{align}
where
$ \bar{\gamma}= \frac{9p^2\nu}{\bar{\beta}} \big[ p(K_0+4 C^2 K_1) + (p-1) K_1\big]$
and $\tilde{Z}:= \frac{9p \nu T}{\bar{\beta}}
\big[p K_0  + K_1  {\EX   } \big(\sup_{0\leq s\leq T} |u^\nu_h(s)|_{H}^{2p} \big)\big]$.
Set $\bar{\lambda} = \frac{1}{2}$,
$\varphi(s)= 2 \big(\psi_1(s)+\nu \psi_2\big)$, $\alpha= 2\bar{\alpha}$,
$\beta= 2 \bar{\beta}$,  $\gamma= 2\bar{\gamma}$,
and $I(t)=2 \bar{I}(t)$.
Then for $\nu \in (0, \nu_0]$, $h\in {\mathcal A}_M$ and $t\in [0,T]$,
we have:
\begin{align*}  \bar{X}(t) + \alpha \bar{Y}(t) &\leq
\int_0^t \varphi (s) \bar{X}(s) ds + I(t) + 2 {\bar{Z}(t)}, \\
{\EX   } I(t) &\leq {\beta}  {\EX} \bar{X} (t) + {\gamma} {\EX }  \int_0^t \bar{X}(s) ds
+ {\delta} {\EX} \bar{Y}(t)
+ 2 { \tilde{Z}} .
\end{align*}

Furthermore,
there exists a constant $\Psi(M,\nu_0)>0$ such that
almost surely,
$\int_0^T \varphi(s) ds \leq  \Psi(M,\nu_0)$ for $\nu \in (0,\nu_0]$ and $h\in {\mathcal A}_M$.
Let $\bar{\beta}>0$ be such that $4 \bar{\beta} \exp(\Psi(M,\nu_0))\leq 1$.
Applying Lemma  A.1  in \cite{ChuMi} we deduce that for every $h\in {\mathcal A}_M$ and $\nu \in (0,{\nu}_0]$:
\begin{equation*}
\sup_N {\EX   }  \Big(\sup_{0\leq t\leq T} |\xi_{h}^{\nu}(t\wedge\bar{\tau}_{N})|_{H}^{2p }
+ \nu \int_0^{\bar{\tau}_{N}} |\nabla \xi_h^\nu (s) |_{H}^2 ds \Big)  \leq
C({\EX   } |\mbox{\rm curl } \zeta|_{H}^{2p}, M, T).
\end{equation*}
Since the previous upper bound is uniform in $N$, we deduce that $\bar{\tau}_N\to T$ as $N\to \infty$. Thus,
using the monotone convergence we get \eqref{xihnu_p},  which concludes the proof.
\end{proof}

The following well-posedeness result for problem  \eqref{uhnu} follows from
Propositions \ref{lemma1} and
\ref{lemma2}; the proof
is not given and  we refer to \cite{ChuMi} and \cite{SrSu} for details.

\begin{theorem}\label{theorem_existence_NSE}
Assume that $(\sigma_\nu, \nu >0)$ satisfies conditions {\bf (C1)} and  {\bf (C2)}
and $(\tilde{\sigma}_\nu, \nu>0)$ satisfies conditions {\bf (C1Bis)} and
{\bf (C2Bis)}. Let  $p\in [2,\infty)$ be such that
${\EX   }(\|\zeta\|^{2p})<\infty$.
Then for every $M>0$, $h\in {\mathcal A}_M$ and $\nu\in (0, \nu_0]$,  there exists a
unique weak solution $u_{h}^{\nu}$ in ${\mathcal C}([0,T];H)\cap L^2(0,T;V)$ of equation
\eqref{uhnu} with initial condition $u_{h}^{\nu}(0)=\zeta\in V$. Furthermore,  {\rm a.s.}
$u_{h}^{\nu}\in {\mathcal C} \big([0,T\big]; V\big) $  and
the inequalities \eqref{uhnu_estimate1}, \eqref{uhnu_estimate2}
and \eqref{uhnu_estimate3} hold.
\end{theorem}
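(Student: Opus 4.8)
The plan is to assemble the statement from the two preceding propositions and then to upgrade the time regularity of the solution from $H$ to $V$. Existence and uniqueness of a weak solution $u_h^\nu$ in ${\mathcal C}([0,T];H)\cap L^2(0,T;V)$, together with the bounds \eqref{uhnu_estimate1} and \eqref{uhnu_estimate2}, are exactly the content of Proposition \ref{lemma1} applied with the given $p\geq 2$, while \eqref{uhnu_estimate3} is Proposition \ref{lemma2}. For existence I would pass to the limit in the Galerkin scheme of Proposition \ref{lemma1}, extracting a subsequence converging weakly in $L^2(\Om\times[0,T];V)$, weakly-$\star$ in $L^{2p}(\Om;L^\infty(0,T;H))$ and weakly in $L^2(\Om\times[0,T];D(A))$, and identifying the nonlinear term in the weak formulation \eqref{weak_form} by a compactness argument as in \cite{ChuMi,SrSu}. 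Uniqueness follows by setting $w=u_1-u_2$, applying It\^o's formula to $|w|_H^2$, using $\langle B(u_2,w),w\rangle=0$ together with $|\langle B(w,w),u_1\rangle|\leq C\|w\|_{\mathcal H}^2\|u_1\|\leq C a_0|w|_H\|w\|\,\|u_1\|$ from \eqref{B(u,u)L4} and \eqref{interpolation_H}; Young's inequality absorbs $\nu\|w\|^2$ into the viscous term and Gronwall's lemma closes the estimate since $\int_0^T\|u_1(s)\|^2\,ds<\infty$ almost surely.

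Thus the genuinely new point is the almost sure continuity $u_h^\nu\in{\mathcal C}([0,T];V)$, for which the hypothesis $\EX\|\zeta\|^{2p}<\infty$ (so that $\zeta\in V$ a.s.) is used at $t=0$. Here I would exploit \eqref{uhnu_estimate3}, which guarantees that almost surely $u_h^\nu\in L^\infty(0,T;V)\cap L^2(0,T;D(A))$, and pass to the vorticity $\xi_h^\nu=\mbox{\rm curl }u_h^\nu$, which through \eqref{normcurl} controls $\|u_h^\nu\|$ in terms of $|u_h^\nu|_H$ and $|\xi_h^\nu|_H$ and which satisfies the scalar evolution equation \eqref{xihnu}. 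The estimate \eqref{xihnu_p} shows that a.s. $\xi_h^\nu\in L^\infty(0,T;H)\cap L^2(0,T;H^1_0)$, the Dirichlet condition $\xi_h^\nu=0$ on $\partial D$ coming from \eqref{elliptic}; consequently $\nu A\xi_h^\nu\in L^2(0,T;H^{-1})$, and combined with the bound on $\mbox{\rm curl }B(u_h^\nu,u_h^\nu)$ coming from \eqref{B(u,u)L4} and \eqref{curlBnul}, and with the $L_Q(H_0,H)$-valued diffusion $\mbox{\rm curl }\sigma_\nu$ controlled by Condition {\bf (C2)}, this places \eqref{xihnu} in the setting of the It\^o formula for the Gelfand triple $H^1_0\subset H\subset H^{-1}$.

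Applying that It\^o formula to $|\xi_h^\nu(t)|_H^2$ then produces an almost surely continuous version of $t\mapsto|\xi_h^\nu(t)|_H^2$. Since $u_h^\nu\in{\mathcal C}([0,T];H)$ is already known and $u_h^\nu\in L^\infty(0,T;V)$ forces weak continuity in $V$, the joint continuity of $t\mapsto|u_h^\nu(t)|_H$ and $t\mapsto|\xi_h^\nu(t)|_H$ upgrades weak continuity to strong continuity in $V$ via \eqref{normcurl}, yielding $u_h^\nu\in{\mathcal C}([0,T];V)$ almost surely.

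I expect the main obstacle to be the rigorous justification of this stochastic It\^o formula in the vorticity formulation. One must verify that every drift term in \eqref{xihnu}, in particular the transport term $\mbox{\rm curl }B(u_h^\nu,u_h^\nu)$, genuinely lies in $L^2(0,T;H^{-1})$ along the solution, and that $\sqrt{\nu}\,\mbox{\rm curl }\sigma_\nu(\cdot,u_h^\nu)$ is an $L_Q(H_0,H)$-valued integrand, so that the Krylov--Rozovskii / Pardoux It\^o formula applies; all of this rests on combining \eqref{B(u,u)L4}, \eqref{normcurl} and Conditions {\bf (C2)}--{\bf (C2Bis)} with the moment bounds \eqref{uhnu_estimate1}--\eqref{uhnu_estimate3}. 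Once this integrability is secured, the cancellation $(\mbox{\rm curl }B(u_h^\nu,u_h^\nu),\xi_h^\nu)=0$ makes the energy identity for $|\xi_h^\nu(t)|_H^2$ closed, and the passage from continuity of this scalar energy to strong $V$-continuity of $u_h^\nu$ is routine, so that the proof can be completed along the lines of \cite{ChuMi} and \cite{SrSu}.
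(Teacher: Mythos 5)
Your proposal is correct and is consistent with the paper's treatment: the paper gives no proof of this theorem, stating only that it follows from Propositions \ref{lemma1} and \ref{lemma2} and referring to \cite{ChuMi} and \cite{SrSu} for the standard details. The details you supply --- Galerkin compactness for existence, the cancellation $\langle B(u_2,w),w\rangle=0$ together with \eqref{B(u,u)L4}, \eqref{interpolation_H}, Young and Gronwall for uniqueness, and the variational It\^o formula for the vorticity in the triple $H^1_0\subset H\subset H^{-1}$ combined with \eqref{normcurl} to upgrade weak $V$-continuity to strong --- are sound and fill in exactly what the paper delegates to the references.
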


\section{Well posedness of the inviscid problem}
The aim of this section is to deal with the inviscid case $\nu=0$, that is with the Euler  evolution equation
\begin{equation}\label{uh}
du_h^0(t) + B(u_h^0(t),u_h^0(t))\, dt =
\tilde{\sigma}_0 (t, u^0_h(t))\, h(t)\, dt\; , \quad u_h^0(0)=\zeta
\end{equation}
in  $[0,T]\times D$.

\begin{theorem}\label{existence_uh}
Let us assume that $\zeta\in V$ and that $\tilde{\sigma}_0$ satisfies conditions {\bf (C1Bis)} and  {\bf (C2Bis)}.
Then for   all $M>0$,  $h\in \mathcal{A}_{M}$ and  $T>0$, there exists {\rm a.s.} a solution
$u_{h}^{0}\in C\big([0,T]; H\big)\bigcap L^{\infty}\big(0,T;
V\big)$ for the equation \eqref{uh} with the initial condition
$u_{h}^{0}=\zeta$,  such that for all $ \varphi\in V$ and  $t\in [0,T]$
\begin{equation}\label{inviscid_weak}
\big( u_{h}^{0}(t), \varphi\big)  -\int_{0}^{t}
\langle B(u_h^0(s), \varphi),u_h^0(s) \rangle   ds=\int_{0}^{t}
\big( \tilde{\sigma_0}(s,u^0_h(s))\, h(s), \varphi\big) ds, \quad\mbox{\rm a.s.}
\end{equation}
Moreover, there exists a positive constant $C_{3}(M)$ (which also depends on $\tilde{K}_0$, $\tilde{K}_1$
and $T$)  such that for every $h\in {\mathcal A}_M$, one has a.s.
\begin{equation}\label{uh_estimate1}
\sup_{0\leq t\leq T}\|u_{h}^{0}(t)\|\leq C_{3}(M) (1+\|\zeta\|) .
\end{equation}
\end{theorem}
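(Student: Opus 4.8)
The plan is to construct $u_h^0$ as the limit of a vanishing--viscosity (equivalently, Galerkin) approximation, working pathwise in $\om$: since $h\in\mathcal A_M$ means $h(\om)\in S_M\subset L^2(0,T;H_0)$ with $\int_0^T|h(s)|_0^2\,ds\le M$ for a.e. $\om$, equation \eqref{uh} carries no stochastic integral and may be solved as a deterministic evolution equation for each fixed realization of $h$; this is exactly why the conclusion is stated ``a.s.''. I would introduce the Galerkin projections $P_n$ onto the span of the first $n$ eigenfunctions of $A$ and solve the finite--dimensional controlled Euler system
\[
\frac{d}{dt}u_n + P_n B(u_n,u_n) = P_n\big[\tilde{\sigma}_0(t,u_n)\,h(t)\big],\qquad u_n(0)=P_n\zeta,
\]
whose local-in-time solvability follows from the Cauchy--Lipschitz theorem, the quadratic nonlinearity $B$ and the Lipschitz bound \eqref{tilde-s-lip} on $\tilde{\sigma}_0$ making the right-hand side locally Lipschitz in $u_n$.

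The core of the argument is a pair of a priori bounds, uniform in $n$, obtained exactly as in the $\nu=0$, noise-free specializations of the computations in Propositions \ref{lemma1} and \ref{lemma2}. Testing against $u_n$ and using $\langle B(u_n,u_n),u_n\rangle=0$ from \eqref{incompress}, the growth bound \eqref{growth-tilde}, Cauchy--Schwarz and Gronwall (with $\int_0^T|h(s)|_0\,ds\le\sqrt{MT}$) gives $\sup_{t\le T}|u_n(t)|_H\le C(M)(1+|\zeta|_H)$. For the $V$-bound I pass to the vorticity $\xi_n=\mbox{\rm curl }u_n$, which solves the forced transport equation obtained by applying $\mbox{\rm curl}$ to the Galerkin system; testing against $\xi_n$ and using the orthogonality $(\mbox{\rm curl }B(u_n,u_n),\xi_n)=0$ (the counterpart of \eqref{B(u,u)Au} exploited in Proposition \ref{lemma2}) together with condition {\bf (C2Bis)(iii)} and Gronwall yields $\sup_{t\le T}|\xi_n(t)|_H\le C(M)(1+|\mbox{\rm curl }\zeta|_H)$. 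Since $\|u\|_V$ is controlled by $|u|_H+|\mbox{\rm curl }u|_H$ through the elliptic problem \eqref{elliptic}, this produces the uniform bound $\sup_n\sup_{t\le T}\|u_n(t)\|\le C_3(M)(1+\|\zeta\|)$, i.e. boundedness of $(u_n)$ in $L^\infty(0,T;V)$.

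To pass to the limit in the inertial term I need strong compactness. From the equation, $\frac{d}{dt}u_n=-P_nB(u_n,u_n)+P_n[\tilde{\sigma}_0(t,u_n)h]$ is bounded in $L^2(0,T;V')$: the bilinear estimate \eqref{bilinear_estimate1} controls $B(u_n,u_n)$ in $V'$ by $|u_n|_H\|u_n\|$, while the growth bound \eqref{growth-tilde} with $h\in L^2(0,T;H_0)$ controls the forcing in $L^2(0,T;H)$. Since the injection $V\subset H$ is compact on the bounded domain $D$, the Aubin--Lions lemma yields a subsequence with $u_n\to u_h^0$ strongly in $L^2(0,T;H)$ and weakly-$*$ in $L^\infty(0,T;V)$. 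Strong $L^2(0,T;H)$ convergence, the $L^\infty(0,T;H)$ bound and the decomposition $b(u_n,\varphi,u_n)-b(u,\varphi,u)=b(u_n-u,\varphi,u_n)+b(u,\varphi,u_n-u)$ then allow passage to the limit in $\langle B(u_n(s),\varphi),u_n(s)\rangle$ for each fixed smooth $\varphi\in V$, while the Lipschitz bound \eqref{tilde-s-lip} handles the convergence of $\tilde{\sigma}_0(s,u_n)h$; this identifies $u_h^0$ as a solution of the weak formulation \eqref{inviscid_weak}.

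Finally, $u_h^0\in L^\infty(0,T;V)$ follows from the weak-$*$ limit, and the estimate \eqref{uh_estimate1} is then inherited from the uniform $V$-bound by weak lower semicontinuity of $\|\cdot\|_V$. The regularity $u_h^0\in C([0,T];H)$ comes from $u_h^0\in L^2(0,T;V)$ together with $\frac{d}{dt}u_h^0\in L^2(0,T;V')$ via the Lions--Magenes embedding. The step I expect to be the main obstacle is the passage to the limit in $B(u_n,u_n)$: because the inviscid limit has no regularizing effect, the only compactness available is that provided by the time-derivative bound, so the whole scheme hinges on securing the $\nu$-free vorticity estimate and the resulting uniform $L^\infty(0,T;V)$ control. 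I note that, since the theorem asserts only existence and not uniqueness, this argument can avoid the more delicate $H^{1,q}$, $q>2$ machinery that the uniqueness of the 2D controlled Euler flow would require.
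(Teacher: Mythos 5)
Your proposal is correct in substance and follows the same architecture as the paper's proof (uniform $H$-estimate by testing with the solution, uniform $V$-estimate via the vorticity and the cancellation \eqref{curlBnul}, a time-derivative bound in $V'$, Aubin--Lions/Lions compactness, passage to the limit in the inertial term using strong $L^2(0,T;H)$ convergence, and lower semicontinuity for \eqref{uh_estimate1}). The one genuine difference is the choice of approximating sequence: you discretize the inviscid equation directly by Galerkin, whereas the paper approximates \eqref{uh} by the \emph{viscous} controlled Navier--Stokes equation \eqref{uh_mu} with viscosity $\mu>0$ and lets $\mu\to 0$. The paper's route lets it invoke Propositions \ref{lemma1} and \ref{lemma2} wholesale for existence and regularity of the approximants ($u_h^{0\mu}\in \mathcal C([0,T];V)$), so the uniform-in-$\mu$ energy and enstrophy estimates can be performed on the actual solutions rather than on a finite-dimensional system; your route is more elementary (Cauchy--Lipschitz plus the $H$-bound gives global solvability of the ODE system) but places the burden of the vorticity estimate at the Galerkin level.

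That last point is the one place where your argument needs an explicit word of justification. Applying $\mbox{\rm curl}$ to $\frac{d}{dt}u_n+P_nB(u_n,u_n)=P_n[\tilde\s_0(t,u_n)h]$ produces $\mbox{\rm curl}\,P_nB(u_n,u_n)$, not $P_n^{\rm curl}$ of anything a priori, so the cancellation $(\mbox{\rm curl}\,B(u_n,u_n),\xi_n)=0$ does not follow from \eqref{curlBnul} alone: you must check that $\mbox{\rm curl}$ intertwines the spectral projection $P_n$ of $A$ with the corresponding projection for the Dirichlet Laplacian acting on the scalar vorticity. This is exactly what the free boundary condition \eqref{boundary_condition} buys (via the elliptic system \eqref{elliptic}, the eigenfunctions of $A$ have vorticities that are Dirichlet eigenfunctions of $-\Delta$), so the step is true here, but it would fail for no-slip boundary conditions and should not be passed over silently. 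With that remark added, your proof is complete; your closing observation that uniqueness (and hence the $H^{1,q}$, $q>2$, machinery of Theorem \ref{uniquness_uh}) is not needed for this statement matches the paper's organization.
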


\begin{proof}
For $\mu>0$, let us approximate equation \eqref{uh} by the solution $u^{0\mu}_h$
to the following Navier Stokes  evolution equation:
\begin{equation}\label{uh_mu}
du_h^{0\mu}(t) +\big[ \mu Au_h^{0\mu}(t) +
B(u_h^{0\mu}(t),u_h^{0\mu}(t))\big] \, dt =
\tilde{\sigma}_0(t, u^{0\mu}_h(t))\, h(t)\, dt\; , \quad
u_h^{0\mu}(0)=\zeta,
\end{equation}
with the same incompressibility and boundary conditions.
If $\zeta\in H$, $\tilde{\sigma}_0$ satisfies the condition {\bf (C1Bis)}
and $h\in {\mathcal{A}}_{M}$ for $M>0$, then Proposition  \ref{lemma1} shows that {\rm a.s.}
equation \eqref{uh_mu} has a unique solution $u_h^{0\mu}\in
{\mathcal C}\big([0,T]; H\big)\bigcap L^{2}\big(0,T;V\big)$ (see also \cite{SrSu} or \cite{ChuMi}). Moreover, if
$\zeta\in V$ and $\tilde{\sigma}_0$ satisfies {\bf (C2Bis)}, then Proposition \ref{lemma2} implies that
{\rm a.s.}  $u_h^{0\mu}\in {\mathcal C} \big([0,T];V\big)$. In order to prove the
existence of solutions for equation  \eqref{uh}, we need some
estimates on $u_h^{0\mu}$ uniform  in $\mu>0$. Multiply the equation
\eqref{uh_mu} by $2 u_h^{0\mu}$ and integrate over $[0,t]\times D$; then
an argument similar to that used to prove Proposition \ref{lemma1}, based on  \eqref{Bnul},
the Cauchy-Schwarz and Young inequalities and
assumption {\bf (C1Bis)},  yields for every $\mu >0$:
\begin{align*}
|u_h^{0\mu}(t)|_{H}^{2} &+2 \mu\int_{0}^{t}\|u_h^{0\mu}(s)\|^{2}ds \leq
|\zeta|_{H}^{2}+
2 \int_{0}^{t}\big(\tilde{\sigma}_0(s,u^{0\mu}_h(s))h(s),u_h^{0\mu}(s) \big)ds\\
&\leq |\zeta|_{H}^{2}+ 2 \tilde{K}_0 \sqrt{ M T}  + 2\Big({\tilde{K}_0} + {\tilde{K}_1}\Big)
\int_0^t  |u_h^{0\mu}(s)|_{H}^2\, |h(s)|_0 ds .
\end{align*}
Hence, by Gronwall's lemma,  we deduce the existence of a constant $\tilde{C}_1$
which depends on $M,T, \tilde{K}_{0}$ and $\tilde{K}_{1}$ such that:
\begin{equation}\label{uhmu_estimate1}
\sup_{\mu>0} \, \sup_{0\leq t\leq T}|u_h^{0\mu}(t)|_{H}^{2}\leq
\tilde{C}_1 (1+|\zeta|_{H}^2) .
\end{equation}
Let $\xi_h^{0\mu}(t):=\mbox{\rm curl } u_h^{0\mu}(t)$;  then applying the
curl operator to equation \eqref{uh_mu}  and using \eqref{curlB} we obtain the following
evolution equation
\begin{equation}\label{xih_mu}
d\xi_h^{0\mu}(t) +\mu A\xi_h^{0\mu}(t) +
B(u_h^{0\mu}(t),\xi_h^{0\mu}(t))\, dt =
\mbox{\rm curl } \tilde{\sigma}_0(t, u^{0\mu}_h(t))\, h(t)\, dt\; ,
\end{equation}
with the initial condition $ \xi_h^{0\mu}(0)=\mbox{\rm curl } \zeta$.
Multiply the equation \eqref{xih_mu} by $2 \xi_h^{0\mu}$ and integrate
over $[0,T]\times D$ and use an argument similar to that in the proof of Proposition \ref{lemma2};
since  $\tilde{\sigma}_0$
satisfies the condition {\bf (C2Bis)}, using \eqref{curlBnul} for $q=2$, \eqref{normcurl},
Cauchy-Schwarz's and Young's inequalities, we deduce
\begin{align*}
&|\xi_h^{0\mu}(t)|_{H}^{2}  +2 \mu\! \int_{0}^{t}\!\!\! \|\xi_h^{0\mu}(s)\|^{2}ds
\leq
|\mbox{\rm curl }\zeta|_{H}^{2}+ 2 \! \int_{0}^{t}\!\!\!
|\mbox{\rm curl } \tilde{\sigma}_0(s, u^{0\mu}_h(s))|_{L(H_{0},H)}
|h(s)|_{0}  |\xi_h^{0\mu}(s)|_{H}  ds\\
&  \leq  \|\zeta\|^{2}   + \Big( 2 \tilde{K}_{0}
+ \tilde{K}_1 \!\! \sup_{s\in [0,T]} |u^{0\mu}_h(s)|_{H}^2\Big)  \sqrt{MT}
+  2 \! \int_0^t \!\! \left( {\tilde{K}_0} + (2 C+1)  {\tilde{K}_{1}}  \right) |h(s)|_{0} |\xi_h^{0\mu}(s)|_{H}^{2}ds.
\end{align*}
Thus, \eqref{uhmu_estimate1} and Gronwall's lemma yield the existence of a constant $\tilde{C}_2:= \tilde{C}_2(M,T,
\tilde{K}_{0},\tilde{K}_{1})$
such that  for every $h\in {\mathcal A}_M$:
$\sup_{\mu>0} \,  \sup_{0\leq t\leq T}|\xi_h^{0\mu}(t)|_{H}^{2}\leq
\tilde{C}_2 (1+ \| \zeta\|^2) \; \mbox{\rm a.s.}$
Combining this  estimate,  \eqref{uhmu_estimate1}
and \eqref{normcurl},
we deduce the existence of a constant $\tilde{C}_3$ depending on
$M,T, \tilde{K}_{0}$ and $\tilde{K}_{1}$
such that
for any
$h\in {\mathcal A}_M$  one has:
\begin{equation}\label{uhmu_estimate2}
\sup_{\mu >0}   \sup_{0\leq t\leq T}\|u_{h}^{0\mu}\|\leq
\tilde{C}_3 (1+ \| \zeta \|)
\ \ {\rm a.s.}
\end{equation}
Furthermore,  we have $u_{h}^{0\mu}\in {\mathcal C}\big([0,T]; H\big)\bigcap L^{\infty}\big(0,T; V\big)$  a.s. for every $\mu>0$,  and
$$u_h^{0\mu}(t) =\zeta-\mu\int_{0}^{t} Au_h^{0\mu}(s) -
\int_{0}^{t}B(u_h^{0\mu}(s),u_h^{0\mu}(s))\, ds +
\int_{0}^{t}\tilde{\sigma}_0(s,u^{0\mu}_h(s))\, h(s)\, ds.$$
Using the estimates \eqref{uhmu_estimate1}, \eqref{uhmu_estimate2},
assumptions {\bf (C2Bis)} on  $\tilde{\sigma}_0$ and \eqref{normBrq} for $q=2$ and $r=1$,
we deduce the existence of a constant $\tilde{C}_4$
  depending on
$M,T, \tilde{K}_{0}$ and $\tilde{K}_{1}$
such that  the following  estimate holds
for any $\mu\in (0,1]$ and $h\in {\mathcal A}_M$:
\begin{equation}\label{uhmu_estimate3}
\|u_h^{0\mu}\|_{W^{1,2}(0,T;V')}\leq \tilde{C}_{4} (1+\| \zeta\|).\ \ {\rm a.s.}
\end{equation}
By classical compactness arguments, we can  extract a subsequence
(still denoted $u_h^{0\mu}$) and prove the existence of  a function
$v\in
W^{1,2}\big(0,T;V'\big)
\bigcap L^{\infty}\big(0,T; V \big)$ such that as $\mu \to 0$:
\begin{align*}
u_h^{0\mu} & \to  v\ \mbox{ \rm weakly\  in }\ L^{2}\big(0,T; V\big)\; \mbox{\rm and in } W^{1,2}\big(0,T; V'\big),\\
u_h^{0\mu} &\to  v\ \mbox{ \rm strongly\  in}\ L^{2}\big(0,T; H\big)\, ,
\\
u_h^{0\mu} &\to  v\ \mbox{ \rm   in the weak star topology of }\ L^{\infty}\big(0,T; V\big)\, .
\end{align*}
Letting  $\mu\rightarrow 0$  in  equation \eqref{uh_mu},
we deduce  that the above limit $v$ is solution of the equation \eqref{uh},
that is  $v=u_{h}^{0}$. Moreover,
\eqref{uhmu_estimate2}
being uniform in $\mu>0$,  we deduce
\eqref{uh_estimate1}.
\end{proof}
Uniqueness of the solution to the Euler equation is known to be a more difficult problem
and the classical  deterministic results use non-Hilbert Sobolev spaces $H^{1,q}$
for  $q\in [2,+\infty)$. This requires to impose some $H^{1,q}$-control on the coefficient $\tilde{\s}_0$,
which is stated below in Condition  \textbf{(C3qBis)} for $\nu=0$.
It will enable
us to prove the uniqueness of the solution to the "deterministic"
inviscid equation in $H^{1,q}$ when $2\leq q<\infty$ in the next result. The following general
assumptions \textbf{(C3q)} and \textbf{(C3qBis)} on $\s_\nu$ and $\tilde{\s}_\nu$  will also
yield some apriori estimates
for the $q$-th moment of
the $H^{1,q}$-norm of the solution to the stochastic controlled
equation which will be proven in section \ref{aprioriH1q}.
This will be needed in order to prove the large deviations result as $\nu \to 0$ in section \ref{s4}.

\noindent \textbf{Condition (C3q):} {\it Let $q\in [2,\infty)$;
$ \s_\nu  \in {\mathcal C}\big([0,T]\times H^{2,q}; R(H_0, H^{1,q})\big)$ for $\nu >0$,
there exist non negative  constants $K_i$ such that
for every $u\in H\bigcap H^{2,q}$ and $\nu >0$, } if $\xi = \mbox{\rm curl }u$,\\
$\| \mbox{\rm curl } \s_\nu (t,u)\|_{R(H_0, L^q)}^2
\leq K_3+ K_4\|u\|_q^2 + K_5 \|\xi \|_q^2$.

\noindent \textbf{Condition  (C3qBis):} {\it  Let $q\in [2,\infty)$;
$\tilde{\s}_\nu \in {\mathcal C}\big([0,T]\times H^{1,q} ;
L(H_0, H^{1,q})\big)$ for $\nu \geq 0$, and there exist non negative  constants
$\tilde{K}_i$ such that for every $u\in H^{1,q} $ and $\nu >0$   (resp.
$u\in H^{2,q}$ for $\nu =0$) if  $\xi = \mbox{\rm curl } u$,\\
$\| \mbox{\rm curl } \tilde{\s}_\nu(t,u)\|_{L(H_{0},L^q )}
\leq \tilde{K}_3+  \tilde{K}_4\|u\|_{q}  + \tilde{K}_5\|\xi \|_{q}$.
}

\par
The following theorem shows that if $\mbox{\rm curl } \zeta$ is bounded,
then the solution to  \eqref{uh} is unique. This will be a key ingredient of
the identification for the rate function of tle LDP in  section \ref{s4}.
\begin{theorem}\label{uniquness_uh}
Let us assume that the assumptions of Theorem \ref{existence_uh} are
satisfied. Moreover, let us assume that $\mbox{\rm curl } \zeta\in
\big(L^{\infty}(D)\big)^{2}$ and that condition {\bf (C3qBis)} holds
for every  $q\in [2,\infty)$ and $\nu=0$. Then, for every $M > 0$ and $h\in {\mathcal A}_M$,
the solution of equation \eqref{uh} with the
initial condition $u_{h}^{0}=\zeta$ is {\rm a.s.} unique in
${\mathcal C}\big([0,T]; H\big)\bigcap L^{\infty}\big(0,T; H^{1,q}
\big)$ for every  $q\in [ 2,\infty) $ and  every $T>0$.
Moreover, there exist  positive constants  $C_{4}(M)$ and $\bar{C}_4(M)$
(which also depend on $T$,
$\tilde{K}_i$ and $\|\zeta\|_{L^\infty(D)^2})$, such that
for every $h\in {\mathcal A}_M$ and $q\in [2,\infty)$, one has a.s.
\begin{align}
\sup_{0\leq t\leq T}  \| \mbox{\rm curl } u_{h}^{0}(t\|_q &\leq
    C_{4}(M) (1+\|\zeta\| + \| \mbox{\rm curl }\zeta\|_q ) ,
\label{uh_estimate2}\\
\sup_{0\leq t\leq T} \|\nabla u_h^0(t)\|_q & \leq
\bar{C}_4(M) q (1+\|\zeta\| + \| \mbox{\rm curl }\zeta\|_q ) .
\label{uh_gradiantq}
\end{align}
\end{theorem}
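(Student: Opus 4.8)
The plan is to establish the two a priori bounds \eqref{uh_estimate2} and \eqref{uh_gradiantq} first, and then to deduce uniqueness from them by a Yudovich-type argument. Both parts rest on the $2$D vorticity formulation: writing $\xi_h^0 = \mbox{\rm curl}\, u_h^0$ and applying the curl to \eqref{uh} (exactly as in the derivation of \eqref{xih_mu}) yields the scalar transport equation
\[
\partial_t \xi_h^0 + B(u_h^0, \xi_h^0) = \mbox{\rm curl}\, \tilde{\s}_0(t, u_h^0)\, h(t),
\qquad \xi_h^0(0) = \mbox{\rm curl}\, \zeta .
\]
Since the solution of Theorem \ref{existence_uh} is obtained as a limit of the viscous approximations $u_h^{0\mu}$, I would carry out the estimates on $\xi_h^{0\mu}$ uniformly in $\mu$ and pass to the limit; the regularizing term $\mu A\xi_h^{0\mu}$ will carry a favorable sign and can be discarded.

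For \eqref{uh_estimate2}, I test the vorticity equation with $|\xi_h^0|^{q-2}\xi_h^0$. The crucial point, and the reason the $L^q$ norm of the vorticity stays controlled uniformly in $q$, is that the transport term vanishes: since $\nabla\cdot u_h^0 = 0$ and $u_h^0\cdot n = 0$ on $\partial D$, one has $\langle B(u_h^0,\xi_h^0), |\xi_h^0|^{q-2}\xi_h^0\rangle = \frac1q\int_D u_h^0\cdot\nabla(|\xi_h^0|^q)\,dx = 0$ (the analogue of \eqref{curlBnul}), while the boundary condition $\xi_h^0=0$ on $\partial D$ kills the boundary contribution of the regularizing term and leaves it nonnegative. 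Hölder's inequality then gives $\frac{d}{dt}\|\xi_h^0\|_q \le \|\mbox{\rm curl}\,\tilde{\s}_0(t,u_h^0)\,h(t)\|_q$. Using \textbf{(C3qBis)}, the Biot--Savart bound $\|u_h^0\|_q \le C(|u_h^0|_H + \|\xi_h^0\|_q)$ with a constant \emph{independent of $q$} (the $2$D Biot--Savart kernel being locally integrable, so that Young's inequality applies with the kernel in $L^1$), and the $H$-bound inherited from \eqref{uhmu_estimate1}, I obtain $\frac{d}{dt}\|\xi_h^0\|_q \le (A + B\|\xi_h^0\|_q)|h(t)|_0$ with $A,B$ independent of $q$. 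Gronwall's lemma together with $\int_0^T |h|_0\,ds \le \sqrt{MT}$ then yields \eqref{uh_estimate2}. The gradient bound \eqref{uh_gradiantq} follows from the elliptic problem \eqref{elliptic} via the Calder\'on--Zygmund estimate $\|\nabla u_h^0\|_q \le C q\,\|\xi_h^0\|_q$, whose constant grows linearly in $q$; this linear growth is precisely what produces the factor $q$ in \eqref{uh_gradiantq}.

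For uniqueness, let $u_1,u_2$ be two solutions in $\mathcal{C}([0,T];H)\cap L^\infty(0,T;H^{1,q})$ with $u_1(0)=u_2(0)=\zeta$, and set $w = u_1-u_2$. The (deterministic) energy identity for $|w|_H^2$, together with $B(u_1,u_1)-B(u_2,u_2) = B(w,u_1)+B(u_2,w)$ and the cancellations $\langle B(u_2,w),w\rangle = 0$ and $\langle B(w,u_1),w\rangle = -\langle B(w,w),u_1\rangle$ from \eqref{incompress}, reduces the nonlinear contribution to $\langle B(w,w),u_1\rangle$. I bound $|\langle B(w,w),u_1\rangle| = |b(w,w,u_1)| \le \|\nabla u_1\|_q\,\|w\|_{2q'}^2$ by Hölder with $q' = q/(q-1)$, and then use the $2$D Gagliardo--Nirenberg inequality $\|w\|_{2q'}^2 \le C\,|w|_H^{2-2/q}\,\|w\|^{2/q}$, where $\|w\| \le 2\sup_t(\|u_1\|+\|u_2\|)=:2R$ is bounded by \eqref{uh_estimate1}. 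Writing $y = |w|_H^2$, using \eqref{uh_gradiantq} in the form $\|\nabla u_1\|_q \le C'q$ (uniform in $q$ because $\mbox{\rm curl}\,\zeta\in L^\infty$, so $\|\mbox{\rm curl}\,\zeta\|_q$ is bounded uniformly in $q$), and estimating the coefficient term by the Lipschitz bound \eqref{tilde-s-lip} on $\tilde{\s}_0$, I arrive at
\[
\frac{dy}{dt} \le C q\, (2R)^{2/q}\, y^{1-1/q} + 2\tilde{L}_1 |h(t)|_0\, y,\qquad y(0)=0 .
\]
Absorbing the linear term through the integrating factor $\exp\big(-2\tilde{L}_1\int_0^t|h|_0\,ds\big)$ (which leaves the first term multiplied by a factor $\le 1$) and integrating the resulting inequality $\frac{d\eta}{dt}\le Cq(2R)^{2/q}\eta^{1-1/q}$ over an interval where $\eta>0$ gives $\eta(t)^{1/q} \le C(2R)^{2/q}\,t$, i.e. $\eta(t) \le (2R)^2(Ct)^q$. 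Letting $q\to\infty$ forces $\eta(t)=0$, hence $w\equiv 0$, for $t<1/C$, and a finite iteration covers $[0,T]$.

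The main obstacle is this uniqueness step, and its delicacy lies entirely in the bookkeeping of the $q$-dependence: the Calder\'on--Zygmund constant in \eqref{uh_gradiantq} grows like $q$, and this must be exactly compensated by the powers produced by the Gagliardo--Nirenberg interpolation so that the Yudovich differential inequality admits only the zero solution in the limit $q\to\infty$. The supporting technical points are establishing the $q$-uniform Biot--Savart bound $\|u\|_q\le C(|u|_H+\|\mbox{\rm curl}\,u\|_q)$ under the free boundary condition \eqref{boundary_condition}, and checking that the boundary terms generated by the curl computation and by the viscous regularization either vanish or carry the correct sign.
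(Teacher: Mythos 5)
Your proposal is correct and follows essentially the same route as the paper: $L^q$ estimates on the vorticity (with the transport term annihilated as in \eqref{curlBnul}), the Calder\'on--Zygmund bound \eqref{normcurl} whose constant grows linearly in $q$, and Yudovich's $q\to\infty$ argument for uniqueness. The only harmless deviations are that you produce the factor $X(t)^{1-1/q}$ in the Yudovich differential inequality by Gagliardo--Nirenberg interpolation between $L^2$ and $V$ rather than, as the paper does, between $L^2$ and $L^\infty$ (using that solutions in $L^\infty(0,T;H^{1,q})$ with $q>2$ are bounded), and that you control $\|u\|_q$ by a $q$-uniform Biot--Savart bound instead of the Sobolev embedding \eqref{Sobolevq} combined with \eqref{uh_estimate1}.
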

\begin{proof}
The first step of the proof will  establish the estimates
\eqref{uh_estimate2} and \eqref{uh_gradiantq}. The second step will  prove the
uniqueness of the solution $u_{h}^0$. \\
{\bf Step1.}  (Existence)
Using \eqref{normcurl} one sees that
the proof of \eqref{uh_gradiantq} reduces to that of
\eqref{uh_estimate2}, that is  to check $L^q(D)$ upper bounds for
$\xi_{h}^{0}(t):=\mbox{\rm curl } u_h^0(t)$.
Replacing  $u^0_h$ by its Galerkin approximation $u^0_{h,n}$, we may assume that $u^0_{h,n} \in H^{2,q}$
and deduce the desired inequality by proving upper bounds which do not depend on $n$. To ease notations
in the sequel, we skip the index $n$.

Let us apply the curl to the equation \eqref{uh}; the identity \eqref{curlB} yields
\begin{equation}\label{uh_curl}
d\xi_{h}^{0}(t) + B\big( u_h^0(t), \xi_h^0(t) \big)  \, dt =
\mbox{\rm curl } \tilde{\sigma}_0 (t,u^0_h(t))\, h(t)\, dt\; , \quad
\xi_{h}^{0}(0)=\mbox{\rm curl } \zeta.
\end{equation}
Let us multiply the equation \eqref{uh_curl} by
$ q |\xi_{h}^{0}(t)|^{q-2}\xi_{h}^{0}(t)$ and integrate over  $[0,t]\times D$; we obtain
\begin{align*}
\|\xi_{h}^{0}(t)\|_{q}^{q}
& + q \int_0^t\!\!  \int_{D} \!     (u_h^0(s)\cdot\nabla)\xi_h^0(s)
|\xi_{h}^{0}(s)|^{q-2}\xi_{h}^{0}(s) dx ds  =
\| \mbox{\rm curl }\zeta \|_q^q \\
&\qquad +  q  \int_0^t \!\! \int_{D} \mbox{\rm curl }  \tilde{\sigma}_0(s,u^0_h(s))
h(s)|\xi_{h}^{0}(s)|^{q-2}\xi_{h}^{0}(s) dx ds.
\end{align*}
Since $\xi_h^0(t)=\mbox{\rm curl }u_h^0(t)$,  \eqref{curlBnul} implies
$\int_{D}(u_h^0(s) \cdot\nabla)\xi_h^0(s)
|\xi_{h}^{0}(s)|^{p-2}\xi_{h}^{0}(s) dx =0$  for every $s$.
On the other side,  the H\"{o}lder and Young inequalities and
{\bf (C3qBis)}  yield:
\begin{align*}
\|\xi_{h}^{0}(t)\|_{q}^{q} & \leq  \| \mbox{\rm curl }\zeta \|_q^q +
q \int_0^t \big( \tilde{K}_3 + \tilde{K}_4 \|u^0_h(s)\|_q  +
\tilde{K}_5 \|\xi^0_h(s)\|_q\big)
|h(s)|_0 \|\xi^0_h(s)\|_q^{q-1} ds \\
&\leq   \| \mbox{\rm curl }\zeta \|_q^q + \Big( q\tilde{K}_3
+  \tilde{K}_4  \sup_{0\leq s\leq T}  \|u^0_h(s)\|^q_q\Big)   \sqrt{MT} \\
&\qquad +  q \big( \tilde{K}_3 + \tilde{K}_4   + \tilde{K}_5
\big)\int_0^t |h(s)|_0  \|\xi_h^0(s)\|_q^q  ds.
\end{align*}
Finally, the inclusion $V=H^{1,2}\subset L^q(D)$ given by \eqref{Sobolevq},
the control of the $V$
norm proven in \eqref{uh_estimate1} (which clearly also holds for the Galerkin approximation $u^0_{h,n}$
with an upper bound which does not depend on $n$) and  Gronwall's lemma imply the existence of
a non negative constant $\tilde{C}_5(M)$,
depending on $T, M, \tilde{K}_i $
such that for any $n\geq 1$ and
$h\in {\mathcal A}_M$, we have
\begin{align}   \sup_{0\leq t\leq T}&  \|\xi_{h,n}^{0}(t)\|_{q}^{q}  \leq
   \Big(\| \mbox{\rm curl } \zeta \|_q^q
+ \Big[ q \tilde{K}_3   + \tilde{K}_4 \, \sup_{0\leq t\leq T} \|u^0_h(t)\|_q^q \Big] \Big) \;
e^{q \tilde{C}_5(M)} \label{majocurlq_1}\\
&\leq   \Big( \|  \mbox{\rm curl } \zeta \|_q^q +  \big[q  \tilde{K}_3  + \tilde{K}_4 C(q)^q C_3(M)^q \,
2^{q-1}(1+\|\zeta\|^q)\big]\Big)  \, e^{q \tilde{C}_5(M)} \nonumber .
\end{align}
Since $\sup\{q^{\frac{1}{q}} : 2\leq q<\infty\}<\infty$,  if $C(q)$ denotes the constant in \eqref{Sobolevq},
as  $n\to \infty$ classical
arguments conclude that $\sup_{0\leq t\leq T} \|\xi_{h,n}^0(t)\|_q
\leq \| \mbox{\rm curl } \zeta\|_q +  C(T,M) (1+ C(q)) \big(1+\| \zeta\|\big) $ for every
$h\in {\mathcal A}_M$ and $n\geq 1$. Using \eqref{normcurl} for some
$q_0\in [2,q)$, we deduce that $\sup_{0\leq t\leq T} \|\nabla
u^0_{h,n}(t)\|_{q_0} \leq C(T,M,q_0) \big( 1+ \| \mbox{\rm curl } \zeta \|_{q_0}  + \|\zeta\|\big) $ a.s.
Thus, the Sobolev embedding \eqref{Sobolev} yields the existence of a
constant  $\tilde{C}_6(M,T)$,  such that  $\sup_{0\leq t\leq T} \|u_{h,n}^0(t)\|_{L^\infty (D)}
\leq  \tilde{C}_6(M,T)  \big( 1+ \| \mbox{\rm curl } \zeta \|_{q_0}
+ \|\zeta\|\big)  $ a.s.  for any $n\geq 1$ and
$h\in {\mathcal A}_M$.
Since $D$ is bounded, using  this inequality in \eqref{majocurlq_1},  we deduce
that
$\sup_{0\leq t\leq T}  \|\xi_{h,n}^{0}(t)\|_{q}^{q}
\leq  \exp(q \tilde{C}_5(M))   \big[  \| \mbox{\rm curl } \zeta \|_q^q + 3^q  \tilde{C}_6(M,T)^q   \big(1+ \|\zeta \|^q +
\| \mbox{\rm curl } \zeta \|_{q_0}^q \big)  \big] $
a.s.
for every integer $n\geq 1$ and any $h\in {\mathcal A}_M$.
Since $ q\in [q_0,+\infty)$ and $D$ is bounded,   we deduce  $ \| \mbox{\rm curl } \zeta \|_{q_0}^q \leq [\lambda(D)\vee 1]^q \| \mbox{\rm curl } \zeta \|_{q}^q$.
Letting $n\to \infty$ and using classical arguments, we conclude the proof of   \eqref{uh_gradiantq}.

\noindent {\bf Step 2.} (Uniqueness)
Let us mention  that the proof of the uniqueness is based on
\cite{Bardos} and \cite{Yu} adapted to the nonhomogeneous random case.
Using the estimate \eqref{uh_gradiantq}
for  some $q>2$ (such as $q=4$) and \eqref{Sobolev}, we deduce that any solution $u_h^0$ to  \eqref{uh}
belongs to  $L^{\infty}((0,T)\times D)$. Let
$u_{h}^{0}$ and $v_{h}^{0}$ be two solutions for equation \eqref{uh}
with the same initial condition and let us denote by
$z:=u_{h}^{0}-v_{h}^{0}$; then  $z$ is solution of $z(0)=0$ and
\begin{equation*}
dz(s) + \left[B(u_{h}^{0}(s),u_{h}^{0}(s))-B(v_{h}^{0}(s),v_{h}^{0}(s))\right]ds
=\left[\tilde{\sigma}_0(s,u^0_h(s))-\tilde{\sigma}_0(s,v^0_h(s))\right]h(s) ds .
\end{equation*}
Let us multiply the above equation by $ z(t)$ and integrate on $D$, use assumption {\bf
(C1Bis)} on $\tilde{\sigma}_0$, the Schwarz and H\"older inequalities and \eqref{uh_gradiantq}. This yields for
any $q\in (1,\infty)$,  when $q^*= \frac{q}{q-1}$ denotes the conjugate exponent of $q$:

\begin{align*}
\frac{1}{2} \frac{d}{dt}&|z(t)|_H^{2}= - (B(z(t),u_{h}^{0}(t)), z(t))
+   \big( \left[ \tilde{\sigma}_0(t,u^0_h(t))-\tilde{\sigma}_0(t,v^0_h(t))\right] h(t) , z(t) \big) \\
&\leq  \int_{D}|z(t)|^{2}(x) |\nabla u_{h}^{0}(t)|(x) \, dx
+|\left(\tilde{\sigma}_0(t,u^0_h(t))-\tilde{\sigma}_0(t,v^0_h(t))\right)|_{L(H_0,H)} |h(t)|_0 \, |z(t)|_H\\
&\leq   \|\nabla u_h^0(t) \|_q \|z(t)\|_{L^\infty(D)}^{\frac{2}{q}} |z(t)|_H^{\frac{2}{q^*}}
+ \tilde{L}_{1}|u^0_h(t)-v^0_h(t)|_H |h(t)|_{0}|z(t)|_H .
\end{align*}
Set
$Z :=\sup_{0\leq t\leq T} \|z(t)\|_{L^{\infty}(D)}$ and $X(t):=|z(t)|_{H}^{2}$. Since $D$ is bounded,
there exists a constant $C\geq 1$ such that $\|\mbox{\rm curl }\zeta\|_q \leq C  \|\mbox{\rm curl }\zeta\|_\infty$ for
every  $q\in [2,\infty) $;   then
$X(0)=0$ and  for $t\in [0,T]$, \eqref{uh_gradiantq} yields
\begin{equation*}
X'(t) \leq  2 C q \, \bar{C}_4(M) [1+ \|\zeta\|
+ \|\mbox{\rm curl } \zeta \|_{L^{\infty}(D)}]   Z^{\frac{2}{q}} X(t)^{1-\frac{1}{q}} +
2 \tilde{L}_1 |h(t)|_0 X(t),
\end{equation*}
which leads to
\begin{equation*}
\int_{0}^{t}\frac{X'(s)}{X(s)^{1-1/q}} ds \leq
2 C q\,  \bar{C}_4(M) [1+ \|\zeta\|
+ \|\mbox{\rm curl } \zeta \|_{L^{\infty}(D)}]  Z^{\frac{2}{q}} t
+ \int_{0}^{t} 2 \tilde{L}_{1}|h(s)|_{0}  X(s)^{1/q} ds.
\end{equation*}
Hence, using Gronwall's lemma, we deduce that for $q\in [2,\infty)$ and $t\in [0,T]$,
\begin{eqnarray*}
X(t)^{\frac{1}{q}}&\leq&
2 \bar{C}_4(M) [1+ \|\zeta\|
+ \|\mbox{\rm curl } \zeta \|_{L^{\infty}(D)}]   Z^{\frac{2}{q}} t
+\frac{2}{q} \int_{0}^{t}\tilde{L}_{1} |h(s)|_{0} X(s)^{\frac{1}{q}} ds\\
&\leq& 2 \bar{C}_4(M) [1+ \|\zeta\|
+ \|\mbox{\rm curl } \zeta \|_{L^{\infty}(D)}]   Z^{\frac{2}{q}} t \exp( \tilde{L}_1 \sqrt{MT}).
\end{eqnarray*}
Finally,  we get the following estimate for any $T^* \in [0,T]$
and $q\in (2,\infty)$:
\begin{equation}\label{uniqueness_estimate}
\sup_{0\leq t\leq T^*} |z(t)|_{H}^{2}\leq\left(2\bar{C}_4(M) [1+ \|\zeta\|
+ \|\mbox{\rm curl } \zeta \|_{L^{\infty}(D)}] T^* \exp(2\tilde{L}_1 \sqrt{MT})  \right)^q Z^2.
\end{equation}
Thus, choosing $T_1^{*}>0 $ small enough and letting
$q\longrightarrow\infty$, we deduce that $|z(t)|_{H}^{2}=0$ for every
$t \in [0,  T_1^{*}]$. Repeating this argument with $u_h^0(T_1^*)= v_h^0(T_1^* ) $
instead of $\zeta$ and using \eqref{Sobolev}, \eqref{uh_gradiantq} and \eqref{uh_estimate1},  we conclude
that there exists $T^*>0$ such that $|z(t)|_{H}^{2}=0$ for every integer $k=0,1, \cdots$ and any
$t\in [T_1^{*} + k T^*,
T_1^{*} + (k+1) T^*]\cap [0,T]$. This concludes the proof
of the uniqueness.
\end{proof}

\section{Apriori bounds of the stochastic controlled equation in $H^{1,q}$}\label{aprioriH1q}
In order to prove the large deviation principle for the solution $u$ to \eqref{Navier-Stokes}, we need to
obtain  more regularity  and   apriori bounds for the solution $u^\nu_h$ to the stochastic controlled
equation \eqref{uhnu} in
the Sobolev spaces $H^{1,q}$ for $q\in [2,+\infty)$.
This requires some more conditions on the diffusion coefficient $\sigma_\nu$ and $\tilde{\s}_{\nu}$ introduced
in the previous section. It also relies on  the stochastic
calculus in Banach spaces, which is  briefly described in the subsection \ref{Radon} of the Appendix.

\begin{Prop} \label{prop4.1}
Suppose that $\EE|\zeta|_H^{2p}<\infty$ for some $p\in [2,\infty)$  and let $q\in [2,\infty)$
be such that $E\|\zeta\|_{H^{1,q}}^q <\infty$. Assume that
$\sigma_\nu$ satisfies conditions {\bf (C1)}--{\bf (C3q)} and $\tilde{\sigma}_\nu$ satisfies conditions
{\bf (C1Bis)}--{\bf (C3qBis)}.
Then for  $M>0$, $\nu_0>0$,  $h\in {\mathcal A}_M$
and  $\nu \in (0,{\nu}_0]$,  the solution $u_h^\nu$ to \eqref{uhnu} belongs to
$L^\infty(0,T ; H^{1,q})$ a.s.  Furthermore, there exists a constant $C_5(M ,q)$ such that
\begin{equation}\label{uhnu_H1q}
\sup_{0<\nu\leq {\nu}_0}\, \sup_{h\in {\mathcal A}_M} {\EX} \Big( \sup_{0\leq t\leq T}
\|u^\nu_h(t)\|_{H^{1,q}}^q \Big) \leq
C_5(M,q)\, \big( 1+{\EX   } \|\zeta \|_{H^{1,q}}^q \big) .
\end{equation}
\end{Prop}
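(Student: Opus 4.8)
The plan is to follow the strategy already used for Propositions \ref{lemma1} and \ref{lemma2}: pass to the vorticity formulation, exploit the cancellation of the nonlinear term, and close the estimate by a stochastic Gronwall argument, the only genuinely new ingredient being that It\^o's formula must now be used in the non-Hilbert space $L^q$. Using \eqref{normcurl}, the $H^{1,q}$-norm of $u^\nu_h$ is controlled by $\|u^\nu_h\|_q$ and $\|\xi^\nu_h\|_q$, where $\xi^\nu_h:=\mbox{\rm curl } u^\nu_h$; since $V=H^{1,2}\subset L^q$ in dimension two by \eqref{Sobolevq}, the term $\|u^\nu_h\|_q$ is dominated by $\|u^\nu_h\|$, whose moments are provided (uniformly in $\nu\in(0,\nu_0]$ and $h\in{\mathcal A}_M$) by Propositions \ref{lemma1} and \ref{lemma2}. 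Hence it suffices to bound $\EX\big(\sup_{0\leq t\leq T}\|\xi^\nu_h(t)\|_q^q\big)$ uniformly in $\nu$ and $h$. As before I would work with the Galerkin approximation $u^{\nu,n}_h$ (so that $\xi^{\nu,n}_h\in H^{2,q}$ and all computations are licit) together with the stopping times $\bar\tau_N=\inf\{t\geq 0:\|\xi^\nu_h(t)\|_q\geq N\}\wedge T$, and derive bounds independent of $n$ and $N$.

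Applying the curl to \eqref{uhnu} gives the vorticity equation \eqref{xihnu}. The core step is then It\^o's formula, in the framework of stochastic calculus in Banach spaces recalled in subsection \ref{Radon}, applied to the functional $\xi\mapsto\|\xi\|_q^q=\int_D|\xi(x)|^q\,dx$, whose first and second derivatives in directions $\phi,\psi$ are $q\int_D|\xi|^{q-2}\xi\,\phi\,dx$ and $q(q-1)\int_D|\xi|^{q-2}\phi\,\psi\,dx$. This produces, besides the initial datum $\|\mbox{\rm curl }\zeta\|_q^q$, five contributions: (i) the viscous term $\nu q\int_D|\xi|^{q-2}\xi\,\Delta\xi\,dx$, which after integration by parts equals $-\nu q(q-1)\int_D|\xi|^{q-2}|\nabla\xi|^2\,dx\leq 0$ and may be kept on the left-hand side (or discarded); (ii) the nonlinear term, which vanishes since $\mbox{\rm curl } B(u,u)=B(u,\xi)=(u\cdot\nabla)\xi$ and the incompressibility identity \eqref{curlBnul} gives $\int_D|\xi|^{q-2}\xi\,(u\cdot\nabla)\xi\,dx=\frac1q\int_D u\cdot\nabla(|\xi|^q)\,dx=0$; (iii) the drift $q\int_D|\xi|^{q-2}\xi\,\mbox{\rm curl }\big(\tilde\sigma_\nu(u)h\big)\,dx$; (iv) the stochastic integral against $\sqrt\nu\,\mbox{\rm curl }\sigma_\nu(u)\,dW$; and (v) the It\^o correction of order $\nu$ coming from the second derivative.

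The estimates then run as follows. For (iii), H\"older's inequality, Condition {\bf (C3qBis)} and Young's inequality give
\[ q\|\xi\|_q^{q-1}\big(\tilde{K}_3+\tilde{K}_4\|u\|_q+\tilde{K}_5\|\xi\|_q\big)|h|_0\leq \psi(s)\,\|\xi\|_q^q+C\big(\tilde{K}_3^q+\|u\|_q^q\big)|h|_0, \]
where $\psi(s)=C\big(1+|h(s)|_0\big)$ satisfies $\int_0^T\psi(s)\,ds\leq C(M,T)$ by Cauchy--Schwarz ($\int_0^T|h|_0\,ds\leq\sqrt{MT}$), and the lower-order term $\|u\|_q^q\leq C(q)^q\|u\|^q$ is controlled by Proposition \ref{lemma2}. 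For the martingale (iv), the Burkholder--Davis--Gundy inequality, the bound $\sum_k\big(\int_D|\xi|^{q-2}\xi\,\mbox{\rm curl }\sigma_\nu(u)\psi_k\,dx\big)^2\leq\|\xi\|_q^{2(q-1)}\|\mbox{\rm curl }\sigma_\nu(u)\|_{L(H_0,L^q)}^2$, Condition {\bf (C3q)} and Young's inequality yield, for any $\bar\beta>0$, a bound of the form $\bar\beta\,\EX\sup_{s\leq t}\|\xi(s)\|_q^q+\bar\gamma\,\EX\int_0^t\|\xi(s)\|_q^q\,ds+C$, with $\bar\beta$ (small) used to absorb the supremum into the left-hand side and $\bar\gamma,C$ proportional to $\nu\leq\nu_0$. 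For (v), Condition {\bf (C3q)} together with the identification of the pointwise square function $\sum_k|\mbox{\rm curl }\sigma_\nu(u)\psi_k(\cdot)|^2$ with the $\gamma$-radonifying norm (subsection \ref{Radon}) bounds the correction by $C\nu\,\|\xi\|_q^{q-2}\big(K_3+K_4\|u\|_q^2+K_5\|\xi\|_q^2\big)$, which after Young's inequality again feeds a $\|\xi\|_q^q$-term into the Gronwall part and lower-order $\|u\|$-moments into the constant; the factor $\nu\leq\nu_0$ keeps everything uniform.

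Collecting these bounds yields an inequality of exactly the type treated by Lemma A.1 of \cite{ChuMi} used in the proofs above, whose Gronwall factor $\exp\big(\int_0^T\psi\big)$ is bounded uniformly in $\nu\in(0,\nu_0]$, $h\in{\mathcal A}_M$, $n$ and $N$. This gives the desired bound on $\EX\sup_t\|\xi^{\nu,n}_h(t\wedge\bar\tau_N)\|_q^q$; letting $N\to\infty$ (the right-hand side being independent of $N$ forces $\bar\tau_N\to T$, and monotone convergence applies) and then passing to the Galerkin limit $n\to\infty$ by the usual lower-semicontinuity argument, and invoking \eqref{normcurl} and \eqref{Sobolevq} once more, produces \eqref{uhnu_H1q}. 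I expect the main obstacle to be step (v): making It\^o's formula for $\|\cdot\|_q^q$ rigorous in the Banach space $L^q$ and controlling the associated second-order correction through the $\gamma$-radonifying norm rather than a Hilbert--Schmidt norm. This is precisely the point where the non-Hilbertian functional setting of the paper is essential and where Condition {\bf (C3q)}, phrased in terms of $R(H_0,L^q)$, enters.
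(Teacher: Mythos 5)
Your proposal is correct and follows essentially the same route as the paper's proof: reduction to the vorticity $\xi^\nu_h=\mbox{\rm curl }u^\nu_h$ via \eqref{normcurl} and \eqref{Sobolevq}, the Banach-space It\^o formula \eqref{ItoRadon}--\eqref{Itocor} for $\|\cdot\|_q^q$, cancellation of the nonlinear term by \eqref{curlBnul}, estimates of the drift, martingale and It\^o-correction terms via {\bf (C3qBis)}, \eqref{BDGRadon} and {\bf (C3q)}, and closure by Lemma A.1 of \cite{ChuMi} with the Galerkin/stopping-time limits. The only cosmetic difference is that the paper keeps the (nonnegative) viscous dissipation term on the left-hand side as an auxiliary quantity $Y(t)$ fed into that lemma, rather than discarding it.
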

\begin{proof} The Sobolev embedding inequality \eqref{Sobolevq} and Proposition \ref{lemma2}
imply that for $0<\nu\leq {\nu}_0$
and $h\in {\mathcal A}_M$, ${\EX   }\big( \sup_{0\leq t\leq T} \|u^\nu_h(t)\|_q^q \big)
\leq C(q)^q  C_2(q,M) \big(1+{\EX   } \|\zeta\|^q\big)$.  Using the inequality \eqref{normcurl}, one sees that the proof of
\eqref{uhnu_H1q} reduces to check that if $\zeta_h^\nu=\mbox{\rm curl }u^\nu_h$,
\begin{equation}\label{curl_H1q}
\sup_{0<\nu\leq {\nu}_0}\,
\sup_{h\in {\mathcal A}_M} {\EX   } \Big( \sup_{0\leq t\leq T} \|\xi^\nu_h(t)\|_{q}^q \Big)
\leq \bar{C}_6(M,q)\, \big( 1+{\EX   } \|\mbox{\rm curl }\zeta \|_{q}^q \big).
\end{equation}
We use once more the Galerkin approximation $u^\nu_{h,n}$ of $u^\nu_h$ and prove an estimate similar to \eqref{curl_H1q}
for $\xi^\nu_{h,n} = \mbox{\rm curl }u^\nu_{h,n}$ with a constant $C_6(M,q)$ which does not depend on $n$.
The process  $\xi^\nu_{h,n}$ satisfies an equation similar to \eqref{xihnu} and once more
to ease notations, we will skip the index $n$.
Let $\langle .,.\rangle$ denote the duality between  $L^q(D)$ and $L^{q^*}(D)$
for some $q^*=\frac{q}{q-1}$.
For fixed $N>0$, let $\tau_N=\inf\{ t\geq 0 : \|\xi_{h}^\nu(t)\|_q \geq N\}\wedge T$.
The It\^o formula \eqref{ItoRadon} and the upper estimate \eqref{Itocor} yield
\begin{equation} \label{decH1p}
\|\xi^\nu_h(t\wedge \tau_N)\|_q^q  \leq  \|\mbox{curl }\zeta\|_q^q + J(t) + \sum_{1\leq i\leq 4} T_i(t),
\end{equation}
where we have:
\begin{eqnarray*}
J(t)&=& q\sqrt{\nu} \int_0^{t\wedge \tau_N} \langle |\xi_h^\nu(s)|^{q-2} \xi_h^\nu(s)\, ,\,
\mbox{\rm curl }\sigma_\nu(s,u^\nu_h(s))
dW(s)\rangle ,\\
T_1(t)&=& - q \nu \int_0^{t\wedge \tau_N}  \langle |\xi_h^\nu(s)|^{q-2} \xi_h^\nu(s)\, ,\, A\xi_h^\nu(s) \rangle ds,\\
T_2(t)&=& -q \int_0^{t\wedge \tau_N}  \langle |\xi_h^\nu(s)|^{q-2} \xi_h^\nu(s)\, ,\,
\mbox{\rm curl }B(u_h^\nu(s) , u_h^\nu(s))
\rangle ds, \\
T_3(t)&=& q\int_0^{t\wedge \tau_N}  \langle |\xi_h^\nu(s)|^{q-2} \xi_h^\nu(s)\, ,\,
{\rm curl \, } \tilde{\sigma}_\nu(s,u^\nu_h(s))
h(s) \rangle ds,\\
T_4(t)&=& \frac{q}{2}(q-1) \nu \int_0^{t\wedge \tau_N} \|\mbox{curl } \sigma_\nu(s,u^\nu_h(s))\|_{R(H_0,L^q)}^2
\|\xi_h^\nu(s)\|^{q-2}_q\, ds.
\end{eqnarray*}
Since $\xi_{h}^\nu=0$ on $\partial D$ and $A=-\Delta$,  we have:
\begin{eqnarray*}
T_1(t)&=&-q \nu \int_0^{t\wedge \tau_N} \!\! ds\int_D \langle \nabla\big( |\xi_h^\nu(s)|^{q-2} \xi_h^\nu(s)\big) \, , \,
\nabla \xi^\nu_h(s)\rangle
\, dx \\
&=& -q (q-1)\nu  \int_0^{t\wedge \tau_N} \!\! ds\int_D |\xi_h^\nu(s)|^{q-2} \, |\nabla \xi_h^\nu(s)|^2 dx .
\end{eqnarray*}
Since $\xi_h^\nu = \mbox{\rm curl } u_h^\nu$, equation \eqref{curlBnul} implies that $T_2(t)=0$.
H\"older's and Young's inequalities and the assumption {\bf (C3qBis)}
yield:  
\begin{align*}
T_3&(t)  \leq  q\int_0^{t\wedge \tau_N} \| |\xi_h^\nu(s)|^{q-1} \|_{q^*} \, \|\mbox{\rm curl } \tilde{\sigma}_\nu(s,u^\nu_h(s))
h(s)\|_q\, ds\\
&\leq  q\tilde{K}_3 \sqrt{MT}  + \tilde{K}_4 \int_0^{t\wedge \tau_N} \!\! \|u^\nu_h(s)\|_q^q \, |h(s)|_0 \, ds
+ \int_0^{t\wedge \tau_N} \!\! \!\! \|\xi_h^\nu(s)\|_q^q   q\big[ \tilde{K}_3 + \tilde{K}_4 +
  \tilde{K}_5 \big] |h(s)|_0
ds.
\end{align*}
Condition {\bf (C3q)}, H\"older's and Young's inequalities imply that for any
$\nu \in (0,\nu_{0}]$,
\begin{align*}
T_4(t) &\leq   \frac{q(q-1)}{2} \nu \int_0^{t\wedge \tau_N} \!\! \|\xi_h^\nu(s)\|_q^{q-2} \Big[K_3+K_4\|u^\nu_h(s)\|_q^2 +
K_5 \|\xi^\nu_h(s)\|_q^2 \Big] ds\\
&\leq  \frac{ q(q-1)}{2} {\nu} K_3 T + \nu (q-1) K_4 \int_0^{t\wedge \tau_N} \!\!\! \|u^\nu_h(s)\|_q^q ds
\\ & \qquad
+ \nu \frac{q-1}{2}  \int_0^{t\wedge \tau_N}\!\! \big[ q (K_3+K_5) +  K_4 (q-2) \big]  \|\xi_h^\nu(s)\|_q^q  ds.
\end{align*}
For $t\in [0,T]$, let
\[ X(t)=\sup_{0\leq s\leq t} \|\xi_h^\nu(s\wedge \tau_N)\|_q^q\quad \mbox{\rm and } \quad
Y(t)=  \int_0^{t\wedge \tau_N} ds \int_D |\xi_h^\nu(s)|^{q-2} |\nabla \xi_h^\nu(s)|^2 dx.\]
Then for any $\lambda \in (0,1)$, the
inequality \eqref{decH1p} and the above estimates of $T_i(t)$ imply that
\[ \lambda X(t) + (1-\lambda) \|\xi_h^\nu(t\wedge \tau_N)\|_q^q + \nu q(q-1)(1-\lambda)   Y(t)
\leq Z + \int_0^t \!\! \varphi(s) X(s) ds + I(t),
\]
where
\begin{align*}
&I(t)= \sup_{0\leq s\leq t} J(s),\\
&Z= \|\mbox{\rm curl }\zeta \|_q^q + q\tilde{K}_3 \sqrt{MT} + \frac{q(q-1)}{2} \nu K_3 T
+ \int_0^{t\wedge \tau_N}\!\!\!  \big[ \nu K_4 (q-1) + \tilde{K}_4 |h(s)|_0\big] \|u^\nu_h(s)\|_q^q ds,\\
&\varphi(s) =  q (\tilde{K}_3 + \tilde{K}_4+\tilde{K}_5 ) |h(s)|_0
+ \frac{q-1}{2}  \nu q (K_3+K_4+K_5)   .
\end{align*}
Set $\lambda = \frac{1}{2}$; then there exists a constant $\Phi(\nu_{0},M)$
such that for  $\nu \in (0,\nu_{0}] $ and $h\in {\mathcal A}_{M}$, almost surely
let $\int_0^T \varphi(s) ds \leq  \Phi(\nu_{0},M)$ and
\[ \frac{1}{2} X(t) + \frac{\nu}{2} q(q-1) Y(t) \leq Z + \int_0^t \varphi(s) X(s) ds + I(t). \]
Furthermore, using the Burkholder-Davies-Gundy inequality \eqref{BDGRadon}, condition {\bf (C3q)}, H\"older's and Young's
inequalities,  we deduce that for any $\beta >0$,
\begin{align*}
& {\EX   } I(t)
\leq \sqrt{\nu} C_1 q {\bf E} \Big( \int_0^{t\wedge \tau_N} \|\mbox{\rm curl } \sigma_\nu(s,u^\nu_h(s))\|_{R(H_0,L_q)}^2
\|\xi_h^\nu(s)\|_q^{2(q-1)} ds\Big)^{\frac{1}{2}}\\
& \leq \sqrt{\nu} C_1 q {\EX   } \Big(\sup_{0\leq s\leq t} \|\xi_h^\nu(s\wedge \tau_N)\|^{\frac{q}{2}}_q
\Big[ \int_0^{t\wedge \tau_N}\!\!\!  \|\xi_h^\nu(s)\|_q^{q-2} \Big\{  K_3 + K_4 \|u_h^\nu(s)\|_q^2
+  K_5 \|\xi_h^\nu(s)\|_q^2
\Big\}
ds \Big]^{\frac{1}{2}} \Big)\\
&\leq \beta {\EX   } X(t) + \gamma {\bf E} \int_0^t X(s) ds + \bar{Z},
\end{align*}
where
\[  
\gamma=  \frac{1}{4\beta} \nu C_1^2 \big[ q^2 (K_3+K_5) +  q K_4 (q-2)
\big]  ,\quad
\bar{Z}= \frac{K_4}{2\beta} \nu q C_1^2 {\EX   }\int_0^{t\wedge \tau_N} \!\!\! \|u_h^\nu(s)\|_q^q ds +  \frac{K_3 T \nu q^2}{4\beta} .
\]  
Set $\alpha = \frac{\nu}{2} q(q-1)$, and choose $\beta >0$ such that
$2\beta e^{\Phi(\nu_{0},M) } \leq 1/2$.
Then, using once more Lemma A1 in \cite{ChuMi},
we conclude that \eqref{curl_H1q} holds for the Galerkin approximation $\xi_{h,n}^\nu$ of $\xi_h^\nu$ with a constant
$C_6(M,q)$ which does not depend on $n$.  A classical weak convergence argument
concludes the proof.
\end{proof}

\section{Large deviations}  \label{s4}
We will prove a large deviation principle using  a weak convergence approach
\cite{BD00, BD07}, based on variational representations of
infinite dimensional Wiener processes. 
For every $\nu >0$,
let 
$\sigma_\nu=\tilde{\sigma}_\nu$  satisfy the conditions  {\bf (C1)}, {\bf (C2)} and ${\bf (C3q)}$ for
every $q\in [2,+\infty)$.
Furthemore, we assume that the following condition holds:\\
 {\bf Condition (C4)}: There exists $\sigma_0$ satisfying conditions
{\bf (C1)}, {\bf (C2)} and ${\bf (C3q)}$ for
every $q\in [2,+\infty)$, such that for some map $\nu \in (0,+\infty) \to C(\nu)\in [0,+\infty)$ which
converges to 0 as $\nu \to 0$,  the upper estimate
\begin{equation}\label{conv_sigma}
\sup_{0\leq t\leq T} \big| \sigma_\nu(t,u) - \sigma_0(t,u)\big|_{L(H_0,H)} \leq C(\nu) \big[1+|u|_H\big]
\end{equation}
holds for $u\in H$ and $\nu >0$.

Note that as in the case of Hilbert-Schmidt operators with Hilbert spaces,
we have  $\|\Phi\|_{L(H_0,L^q)} \leq C \|\Phi\|_{R(H_0, L^q)}$.  
Then, for $\nu \geq 0 $,
the coefficients $\s_\nu$ 
also
satisfy the conditions   {\bf (C1Bis)}-{\bf (C3qBis)}    with appropriate coefficients.

Let $\mathcal{B}$ denote the  Borel $\s-$field of the Polish space
\begin{equation}\label{defX}
{\mathcal X}= C\big([0,T]; H\big)\bigcap L^{\infty}\big(0,T;
H^{1,q} \cap V\big) \bigcap L^{2}\big(0,T; \mathcal{H}\big)
\end{equation}
endowed with the norm
$\|u\|_{\mathcal X}:=\left(\int_{0}^{T}\|u(t)\|_{\mathcal H}^2 dt\right)^{1/2} $
and
\begin{equation}\label{defY}
{\mathcal Y}=\displaystyle\left\{\zeta\in V,\ {\rm such}\ {\rm that}\ \mbox{\rm curl }\zeta\in L^{\infty}(D)\right\}
\end{equation}
endowed with the norm $\|.\|_{\mathcal Y}$ defined by:
$$\|\zeta\|_{\mathcal Y}^2
:=\|\zeta\|^2 + \|\mbox{\rm curl }\zeta\|_{L^{\infty}}^2.$$

Note that using \eqref{normcurl} and \eqref{Sobolevq} we deduce that
${\mathcal Y}\subset H^{1,q}$ for any $q\in [2,\infty)$.
We will establish a LDP in the set ${\mathcal X}$ for the family of distributions of the solutions
$u^\nu= {\mathcal G}_\zeta^\nu(\sqrt{\nu}W)$ to the evolution equation \eqref{SNS} with initial condition $u^\nu(0)=\zeta \in \mathcal{Y}$.
\begin{definition}
The random family
$(u^\nu )$ is said to satisfy a large deviation principle on
${\mathcal X}$  with the good rate function $I$ if the following conditions hold:\\
\indent \textbf{$I$ is a good rate function.} The function  $I: {\mathcal X} \to [0, \infty]$ is
such that for each $M\in [0,\infty[$ the level set $\{\phi \in {\mathcal X}: I(\phi) \leq M
\}$ is a    compact subset of ${\mathcal X}$. \\
For $A\in \mathcal{B}$, set $I(A)=\inf_{u \in A} I(u)$.\\
\indent  \textbf{Large deviation upper bound.} For each closed subset
$F$ of ${\mathcal X} $:
\[
\lim\sup_{\nu\to 0}\; \nu \log \PX(u^\nu \in F) \leq -I(F).
\]
\indent  \textbf{Large deviation lower bound.} For each open subset $G$
of ${\mathcal X} $:
\[
\lim\inf_{\nu\to 0}\; \nu \log \PX(u^\nu \in G) \geq -I(G).
\]
\end{definition}
Let ${\mathcal C}_0=\{ \int_0^. h(s)ds \, :\, h\in L^2(0,T ;  H_0)\}
\subset {\mathcal C}([0, T]; H_0)$.
Given $\zeta\in {\mathcal Y} $ define ${\mathcal G}_\zeta^0:
{\mathcal C}([0, T]; H_0)  \to  {\mathcal X} $   by
$ {\mathcal G}_\zeta^0(g)=u_h^0 $ where  $  g=\int_0^. h(s)ds \in {\mathcal C}_0$ and  $u_h^0$
is the solution  to the
(inviscid) control equation \eqref{uh}
with initial condition $\zeta$  and $\tilde{\sigma}_0=\sigma_0$,  
and ${\mathcal G}^0_\zeta(g)=0$ otherwise. The following theorem is the main result of this section.

\begin{theorem}\label{LDP_unu}
Let  $\zeta\in {\mathcal Y}$, and  for $\nu > 0 $ let   $\s_{\nu}$
satisfy conditions {\bf (C1)}--{\bf (C3q)} for any $q\in [2,+\infty)$ and let condition {\bf (C4)}
be satisfied.
Then the solution  $(u^\nu, {\nu>0})$ to \eqref{SNS}
with initial condition $\zeta$
satisfies a large deviation principle in ${\mathcal X}$
with the good rate function
\begin{eqnarray} \label{ratefc}
I (u)= \inf_{\{h \in L^2(0, T; H_0): \; u ={\mathcal G}_\zeta^0(\int_0^. h(s)ds) \}}
\Big\{\frac12 \int_0^T |h(s)|_0^2\,  ds \Big\}.
\end{eqnarray}
\end{theorem}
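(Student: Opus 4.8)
The plan is to apply the weak convergence (Laplace principle) approach of Budhiraja and Dupuis \cite{BD00, BD07}. Denote by $u^\nu_h$ the solution of the controlled equation \eqref{uhnu} with $\sigma_\nu=\tilde{\sigma}_\nu$, so that in the variational representation $u^\nu={\mathcal G}^\nu_\zeta(\sqrt{\nu}W)$ one has $u^\nu_h={\mathcal G}^\nu_\zeta\big(\sqrt{\nu}W+\frac{1}{\sqrt{\nu}}\int_0^\cdot h(s)\,ds\big)$. Since ${\mathcal X}$ is Polish, the LDP with the candidate good rate function \eqref{ratefc} is equivalent to two sufficient conditions, which I would verify in turn: first the compactness condition, namely that for each $M<\infty$ the set $K_M:=\{{\mathcal G}^0_\zeta(\int_0^\cdot h\,ds):h\in S_M\}$ is compact in ${\mathcal X}$ (whose topology is that of the norm $\|\cdot\|_{\mathcal X}$, i.e. of $L^2(0,T;{\mathcal H})$); and second the weak convergence condition, namely that whenever $h^\nu,h\in{\mathcal A}_M$ with $h^\nu\to h$ in distribution as $S_M$-valued elements for the weak topology, one has $u^\nu_{h^\nu}\to u^0_h={\mathcal G}^0_\zeta(\int_0^\cdot h\,ds)$ in distribution in ${\mathcal X}$. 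Once both are established, the good-rate-function property is automatic from the first.

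For the compactness condition, take $h_n\in S_M$ converging weakly in $L^2(0,T;H_0)$ to $h$. By Theorems \ref{existence_uh} and \ref{uniquness_uh} the solutions $u^0_{h_n}$ are bounded in $L^\infty(0,T;V)\cap L^\infty(0,T;H^{1,q})$ uniformly in $n$, while \eqref{uh} together with \eqref{bilinear_estimate1} bounds $\partial_t u^0_{h_n}$ in $L^2(0,T;V')$. As $V\hookrightarrow{\mathcal H}$ is compact and ${\mathcal H}\hookrightarrow V'$, an Aubin--Lions argument provides a subsequence converging strongly in $L^2(0,T;{\mathcal H})$, i.e. in the norm of ${\mathcal X}$. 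Passing to the limit in the weak formulation \eqref{inviscid_weak}, I would use \eqref{B(u,u)L4} together with ${\mathcal H}\subset L^4(D)$ for the bilinear term, and the strong convergence $\sigma_0(\cdot,u^0_{h_n})\to\sigma_0(\cdot,u^0_h)$ paired with the weak convergence of $h_n$ for the forcing term; every limit point is thereby a solution of \eqref{uh} with control $h$, and uniqueness (Theorem \ref{uniquness_uh}) forces convergence of the full sequence to $u^0_h$, so $K_M$ is compact.

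For the weak convergence condition, which is the genuinely new step, the argument proceeds by tightness rather than by $L^2$-convergence of a localized sequence. The a priori bounds of Propositions \ref{lemma1}, \ref{lemma2} and \ref{prop4.1}, valid uniformly in $\nu\in(0,\nu_0]$ and $h\in{\mathcal A}_M$, bound the laws of $u^\nu_{h^\nu}$ in $L^\infty(0,T;V)\cap L^\infty(0,T;H^{1,q})$; combined with a fractional time-regularity estimate in some $W^{\alpha,p}(0,T;V')$ for the drift, the control and the stochastic integral, an Aubin--Lions-type compactness yields tightness of these laws in the $L^2(0,T;{\mathcal H})$-topology of ${\mathcal X}$. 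By Prokhorov's theorem and the Skorokhod representation theorem one obtains, on a new probability space and along a subsequence, almost sure convergence of $(u^\nu_{h^\nu},h^\nu)$ to some $(u,h)$ in ${\mathcal X}\times S_M$. I would then pass to the limit in the weak form \eqref{weak_form}: the viscous term $\nu\langle u^\nu_{h^\nu},Av\rangle$ and the martingale term $\sqrt{\nu}\int\sigma_\nu\,dW$ vanish as $\nu\to0$ by the uniform bounds; the nonlinear term converges via \eqref{B(u,u)L4} and the strong $L^2(0,T;{\mathcal H})$ convergence; and $\sigma_\nu(\cdot,u^\nu_{h^\nu})h^\nu\to\sigma_0(\cdot,u)h$ using \eqref{conv_sigma} of Condition (C4) together with the strong convergence of the solution and the weak convergence of the control. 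The limit therefore solves \eqref{uh} with $\tilde{\sigma}_0=\sigma_0$ and control $h$, hence equals $u^0_h$ by uniqueness; being deterministic given $h$, this identification upgrades to the required convergence in distribution.

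The main obstacle is precisely the tightness step in the weak convergence condition. The delicate points are obtaining fractional-in-time regularity of the stochastic integral $\sqrt{\nu}\int\sigma_\nu\,dW$ uniformly in the vanishing viscosity $\nu$, so that compactness applies in the strong $L^2(0,T;{\mathcal H})$ topology even though the limiting Euler dynamics carry no parabolic smoothing, and the passage to the limit in the product $\sigma_\nu(\cdot,u^\nu_{h^\nu})h^\nu$, where a weakly convergent control is paired with a coefficient composed with the solution. It is exactly here that the \emph{strong} convergence supplied by the tightness argument, rather than a mere weak limit, is indispensable to identify the limiting controlled Euler equation \eqref{uh} and to conclude.
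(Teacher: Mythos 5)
Your proposal is correct and follows essentially the same route as the paper: the LDP is reduced to the two Budhiraja--Dupuis sufficient conditions, with the compactness of the level sets $K_M$ established exactly as in Proposition \ref{compact} (uniform bounds from Theorems \ref{existence_uh} and \ref{uniquness_uh}, Aubin--Lions compactness in $L^2(0,T;\mathcal{H})$, identification of the limit and uniqueness), and the weak convergence condition established exactly as in Proposition \ref{weakconv} (uniform a priori bounds including fractional time regularity of the stochastic integral, tightness, Prokhorov and the Skorokhod--Jakubowski representation, then identification of the limiting controlled Euler equation via condition \textbf{(C4)} and uniqueness). You also correctly single out the tightness argument as the paper's key departure from earlier LDP proofs for hydrodynamical models.
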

In order to prove this theorem, fix  $q,p\in [ 4,\infty) $, $M>0$ and $\nu_{0}>0$,  let  $(h_\nu , 0 < \nu \leq {\nu}_{0})$
be a family of random elements
taking values in the set ${\mathcal A}_M$ defined by \eqref{AM}.
Let  $u^\nu_{h_\nu}$   be
the solution of the following corresponding stochastic controlled equation
\begin{equation}\label{unuhnu}
du^\nu_{h_\nu}(t) + \big[ \nu Au^\nu_{h_\nu} (t) + B(u^\nu_{h_\nu}(t),u^\nu_{h_\nu}(t))\big] dt
= \sqrt{\nu}\,  \s_\nu(t,u^\nu_{h_\nu}(t))\, dW(t) + {\s}_\nu (t,u^\nu_{h_\nu}(t))h_{\nu}(t)dt,
\end{equation}
with initial condition $u^\nu_{h_\nu}(0)=\xi \in {\mathcal Y}$.
Note that $u^\nu_{h_\nu}={\mathcal G}^\nu_\xi\Big(\sqrt{\nu} \big( W_. + \frac{1}{\sqrt \nu}
  \int_0^. h_\nu(s)ds\big) \Big)$
due to the  uniqueness of the solution.
The following proposition establishes the weak convergence of the family
$(u^\nu_{h_\nu})$ as $\nu\to 0$.

\begin{proposition}\label{weakconv}
Let us assume that for $\nu > 0$ the coefficients $\s_{\nu}$ 
satisfy conditions  {\bf (C1)}--{\bf (C3q)}  for all $q\in [2,+\infty)$
and that condition {\bf (C4)} holds true.
Let $\zeta $ be ${\mathcal F}_0$-measurable such that
$\EX  \big(|\zeta|_{H}^{p}+\|\zeta\|_{\mathcal Y}^{p}\big)<+\infty$
for every $p\in [2,\infty)$,
and let $h_\nu$ converge to $h$ in distribution as random elements
taking values in ${\mathcal A}_M$, where this set is defined by
\eqref{AM} and endowed with the weak topology of the space
$L_2(0,T;H_0)$. Then,  as $\nu \to 0$, the solution $u^\nu_{h_\nu}$
of \eqref{unuhnu} converges in distribution  in ${\mathcal X}$ to
the solution $u_h^0$ of \eqref{uh}.
That is, as $\nu\to 0$, the process
  ${\mathcal G}^\nu_\zeta \Big(\sqrt{\nu} \big( W_. + \frac{1}{\sqrt{\nu}} \int_0^. h_\nu(s)ds\big) \Big)$
  converges in
distribution to $ {\mathcal G}^0_\xi \big(\int_0^. h(s)ds\big)$  in ${\mathcal X}$.
\end{proposition}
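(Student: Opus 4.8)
The plan is to deduce the weak convergence from the uniform a priori bounds of Propositions \ref{lemma1}, \ref{lemma2} and \ref{prop4.1} by a tightness argument, rather than by the $L^2$ convergence of a localized sequence, and then to identify the limit by the uniqueness result of Theorem \ref{uniquness_uh}. The starting point is the decomposition $u^\nu_{h_\nu}=v^\nu+M^\nu$, where $M^\nu(t)=\sqrt{\nu}\int_0^t \sigma_\nu(s,u^\nu_{h_\nu}(s))\,dW(s)$ is the martingale part and $v^\nu$ gathers the initial datum together with the three drift terms in \eqref{unuhnu}. The point of this splitting, which is exactly what lets us avoid estimating any time increment of $u^\nu_{h_\nu}$, is that $M^\nu$ carries a factor $\sqrt{\nu}$ and therefore vanishes in the limit, while $v^\nu$ is differentiable in time with a derivative bounded uniformly in $\nu$ in a fixed negative Sobolev space.

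First I would show that $M^\nu\to 0$ in probability in $\mathcal X$. Applying the Burkholder--Davis--Gundy inequality to $M^\nu$ in the $H$ norm and to $\mathrm{curl}\,M^\nu$ in the $H$ norm, the latter controlling the $V$ norm through \eqref{normcurl} and condition {\bf (C2)}, each bound is proportional to a power of $\nu$ times moments already controlled by Propositions \ref{lemma1}--\ref{lemma2}. The interpolation inequality \eqref{interpolation_H} then gives $\EE\sup_{t\le T}\|M^\nu(t)\|_{\mathcal H}^2\le a_0\,\EE\sup_{t\le T}\big(|M^\nu(t)|_H\,\|M^\nu(t)\|\big)\to 0$, hence $M^\nu\to 0$ both in $L^2(0,T;\mathcal H)$ and in $C([0,T];H)$. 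For $v^\nu$ I would use $V\hookrightarrow\hookrightarrow\mathcal H\hookrightarrow V'$ together with the Aubin--Lions--Simon lemma. By Proposition \ref{lemma2} the family $(v^\nu)$ is bounded in $L^2(\Omega;L^\infty(0,T;V))$, and writing $\partial_t v^\nu=-\nu A u^\nu_{h_\nu}-B(u^\nu_{h_\nu},u^\nu_{h_\nu})+\sigma_\nu(\cdot,u^\nu_{h_\nu})h_\nu$, the first term tends to $0$ in $L^2(0,T;H)$ because $\nu^2\,\EE\int_0^T|Au^\nu_{h_\nu}|_H^2\,ds\le\nu\,C_2(p,M)\big(1+\EE\|\zeta\|^{2p}\big)$; the nonlinear term is bounded in $L^\infty(0,T;V')$ by \eqref{B(u,u)L4} and \eqref{interpolation_H}; and the control term is bounded in $L^2(0,T;H)$ since $\int_0^T|\sigma_\nu(\cdot,u^\nu_{h_\nu})h_\nu|_H^2\,ds\le M\sup_{t\le T}\big(\tilde K_0+\tilde K_1|u^\nu_{h_\nu}(t)|_H\big)^2$ by {\bf (C1Bis)} and $h_\nu\in S_M$. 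Thus $\partial_t v^\nu$ is bounded in $L^2(\Omega;L^2(0,T;V'))$, and a Markov inequality converts these moment bounds into tightness of the laws of $v^\nu$, hence of $u^\nu_{h_\nu}$, on $\mathcal X$.

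Next I would pass to the limit. Since $S_M$ is weakly compact, the laws of $h_\nu$ are automatically tight, so $(u^\nu_{h_\nu},h_\nu,M^\nu)$ is tight on $\mathcal X\times S_M\times\mathcal X$. By the Skorokhod representation theorem there are, along a subsequence, versions $(\tilde u^\nu,\tilde h^\nu,\tilde M^\nu)$ on a common probability space with the same laws and converging almost surely to some $(\tilde u,\tilde h,0)$, the last component being $0$ by the previous step. Each $\tilde u^\nu$ satisfies the weak formulation \eqref{weak_form}, in which the stochastic integral against $\phi\in D(A)$ equals $(\tilde M^\nu(t),\phi)$. Passing to the limit term by term: the viscous term $\nu(\tilde u^\nu,A\phi)\to 0$; the remainder $(\tilde M^\nu(t),\phi)\to 0$; and $\langle B(\tilde u^\nu,\phi),\tilde u^\nu\rangle\to\langle B(\tilde u,\phi),\tilde u\rangle$ by the strong convergence of $\tilde u^\nu$ in $L^2(0,T;\mathcal H)$ and the estimate \eqref{B(u,u)L4}. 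The delicate term is the control term, which I would write as $\int_0^t\big(\tilde h^\nu,\sigma_\nu(s,\tilde u^\nu)^*\phi\big)_0\,ds$ and handle by a weak--strong pairing: $\tilde h^\nu\to\tilde h$ weakly in $L^2(0,T;H_0)$, while $\sigma_\nu(\cdot,\tilde u^\nu)^*\phi\to\sigma_0(\cdot,\tilde u)^*\phi$ strongly in $L^2(0,T;H_0)$, this last convergence being supplied by condition {\bf (C4)} (inequality \eqref{conv_sigma}) together with the Lipschitz property {\bf (C1Bis)} and the strong convergence of $\tilde u^\nu$. We conclude that $\tilde u$ satisfies \eqref{inviscid_weak} with control $\tilde h$.

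Finally, since $\zeta\in\mathcal Y$ ensures $\mathrm{curl}\,\zeta\in L^\infty(D)$, Theorem \ref{uniquness_uh} identifies $\tilde u$ as the unique solution $u^0_{\tilde h}=\mathcal G^0_\zeta\big(\int_0^\cdot\tilde h(s)\,ds\big)$. As $\tilde h$ has the same law as $h$ and $u^0_{\tilde h}$ is a measurable function of $\tilde h$ alone, every subsequential limit has the law of $u^0_h$, and the usual subsequence argument gives convergence in distribution of the whole family $u^\nu_{h_\nu}$ to $u^0_h$ in $\mathcal X$. I expect the two main obstacles to be, on the one hand, the uniform-in-$\nu$ time-derivative bound enabling the Aubin--Lions--Simon compactness, which requires controlling $B(u^\nu_{h_\nu},u^\nu_{h_\nu})$ in $L^\infty(0,T;V')$ and the vanishing of $\nu A u^\nu_{h_\nu}$, and on the other hand the weak--strong pairing in the control term, which is precisely where condition {\bf (C4)} is indispensable.
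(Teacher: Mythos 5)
Your proof is correct and follows the same overall skeleton as the paper (uniform a priori bounds, tightness, Skorokhod representation, term-by-term identification of the limit, uniqueness via Theorem \ref{uniquness_uh}, then the subsequence principle), but the mechanism you use to obtain tightness is genuinely different. The paper keeps the stochastic integral inside the process and therefore needs fractional time regularity: it proves uniform bounds in $W^{\alpha,2}(0,T;H)$ and $W^{\alpha,p}(0,T;V')$ for $\alpha<\frac12$ (estimates \eqref{J1W2}--\eqref{J3Walphap}) and invokes the compact embeddings $W^{\alpha,2}(0,T;H)\cap L^2(0,T;V)\hookrightarrow L^2(0,T;\mathcal H)$ and $W^{\alpha,p}(0,T;V')\hookrightarrow {\mathcal C}([0,T];D(A^{-\beta}))$. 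You instead exploit the factor $\sqrt{\nu}$ in front of the noise to show that the martingale part $M^\nu$ vanishes in probability (in ${\mathcal C}([0,T];H)$ and, via \eqref{normcurl}, \eqref{interpolation_H} and condition \textbf{(C2)}, in $L^2(0,T;\mathcal H)$), and then apply integer-order Aubin--Lions--Simon compactness to the absolutely continuous remainder $v^\nu$, whose time derivative you correctly bound in $L^2(0,T;V')$ uniformly in $\nu$ using \eqref{uhnu_estimate3}, \eqref{B(u,u)L4} and \textbf{(C1)}. What this buys you is convergence in ${\mathcal C}([0,T];H)$ rather than only in ${\mathcal C}([0,T];D(A^{-\beta}))$, which slightly streamlines the identification step; the price is that your route leans essentially on the vanishing noise intensity and would not survive in a regime where the martingale part does not disappear, whereas the paper's fractional Sobolev estimates are the more robust, standard device. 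Your identification step (weak--strong pairing of $h_\nu$ against $\sigma_\nu(\cdot,u^\nu_{h_\nu})^*\varphi$, with \textbf{(C4)} supplying the strong convergence of the coefficient) coincides with the paper's treatment of the terms $I_4$, $I_5$, $I_6$; just be sure to justify the limit there with the same uniform integrability and dominated convergence argument the paper uses, since $C(\nu)\sup_{t}|u^\nu_{h_\nu}(t)|_H$ is controlled in expectation rather than pathwise, and to note (as you implicitly do by carrying $M^\nu$ as a component of the tight family) that the algebraic relation defining the equation transfers to the Skorokhod versions because it is an almost sure identity between measurable functionals of the triple.
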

\begin{proof}
\par
{\bf Step 1:} Let us decompose $u^\nu_{h_\nu} = \zeta + \sum_{i=1}^4 J_i$, where
\begin{align*}
J_1&= -\nu\int_{0}^{t}Au^\nu_{h_\nu}(s)ds, & J_2&= -\int_{0}^{t}B(u^\nu_{h_\nu}(s),u^\nu_{h_\nu}(s))ds, \\
J_3&=\sqrt{\nu}\int_{0}^{t}\s_{\nu}(s,u^\nu_{h_\nu}(s))dW(s),
& J_4&=  \int_{0}^{t} {\s}_{\nu}  (s,u^\nu_{h_\nu}(s))h_\nu(s)ds.
\end{align*}
For $\nu \in (0,\nu_0]$ we have using Minkowski's and Cauchy-Schwarz's inequalities
\begin{eqnarray*}
\|J_{1}\|^{2}_{W^{1,2}(0,T; H)}
&=&\nu \int_{0}^{T}\Big|\int_{0}^{t}Au^\nu_{h_\nu}(s)ds\Big|^{2}_{H} dt
+\nu \int_{0}^{T}|Au^\nu_{h_\nu}(t)|^{2}_{H}dt \\
&\leq& C(T,p) \nu\int_{0}^{T}|A u^\nu_{h_\nu}(s)|_H^{2}ds.
\end{eqnarray*}
Hence, using the estimate \eqref{uhnu_estimate3}, we get that  for $\nu \in (0, \nu_0]$,
\begin{equation}\label{J1W2}
{\EX   }\|J_{1}\|^{2}_{W^{1,2}(0,T; H)}\leq \tilde{C}_{1}(M,T, \nu_0)  [1+\EX \|\zeta\|^{4}] .
\end{equation}
Similarly, the upper estimate \eqref{uhnu_estimate3} implies that for all $p\in [2,\infty)$   and
$\nu \in (0, \nu_0]$,
\begin{eqnarray}\label{J1Wp}
{\EX   }\|J_{1}\|^{p}_{W^{1,p}(0,T; V')}&\leq& \nu C(T) \EX \int_0^T \|A u^\nu_{h_\nu}(s)\|_{V'}^p ds\nonumber  \\
&\leq & \nu C(T) \EX \int_0^T \| u^\nu_{h_\nu}(s)\|^p ds \leq C(T, p, \nu_0) [1+\EX \|\zeta\|^p].
\end{eqnarray}
Using again Minkowski's and H\"older's  inequalities
and the estimate \eqref{boundB}, we deduce that for $4\leq p<q<\infty$ and $\nu \in (0, \nu_0]$,
\begin{equation*}
\|J_{2}\|^{p}_{W^{1,p}(0,T; H)}\leq C(T,p,\nu_0) \int_0^T \|u^\nu_{h_\nu}(t)\|_{H^{1,q}}^{p}
\|u^\nu_{h_\nu}(t)\|^{p} dt.
\end{equation*}
Thus H\"older's inequality with the conjugate exponents $q/p$ and $q/(q-p)$ and the upper estimates
\eqref{uhnu_estimate3} and \eqref{uhnu_H1q} yield for $\nu \in (0,\nu_0]$:
\begin{equation}\label{J2Wp}
{\EX   }\|J_{2}\|_{W^{1,p}(0,T; H)}^p \leq  C(T,M,p,q) [1+{\EX   }\|\zeta\|^{pq/(q-p)}]^{1-p/q}\,
[1+{\EX}\|\zeta\|_{H^{1,q}}^{q}]^{p/q}.
\end{equation}
The Minkowski and Cauchy Schwarz inequalities and condition {\bf (C1)} imply   for $\nu \in (0, \nu_0]$
\begin{eqnarray*}  \|J_4\|_{W^{1,2}(0,T;H)}^2 &\leq& C(T) \int_0^T \| {\s}_\nu  (s,u^\nu_{h_\nu}(s))\|_{L_Q}^2 |h_\nu(s)|_0^2 ds
\\
&\leq & C(T,\nu_0,M) \Big[1+\sup_{0\leq t\leq T} |u_{h_\nu}^\nu(t)|_{H}^2
\Big].
\end{eqnarray*}
Thus the upper estimate  \eqref{uhnu_estimate1}  
yields
that for $\nu \in (0, {\nu}_0]$ one has:
\begin{equation} \label{J4W2}
\EX  \|J_4\|_{W^{1,2}(0,T;H)}^2 \leq C(T,M) \big[ 1+ \EX |\zeta|_{H}^4
   \big].
\end{equation}
Furthermore,  H\"older's inequality and  {\bf (C1)}  imply that for
$\nu \in (0, {\nu}_0]$ and $p\in [4,\infty)$:
\[ \int_0^T | J_4(t) |_H^p dt \leq M^{\frac{p}{2}} C  \big [1+
\sup_{s\leq T} |u^\nu_{h_\nu}(s)|_{H}^p
\big] .
\]

Let $\alpha\in (0,\frac{1}{2})$; then using again  Minkowski's and
H\"older's inequalities,    condition {\bf (C1)} and Fubini's theorem, we deduce that for $\nu \in (0, {\nu}_0]$:
\begin{align*}
\int_0^T &\int_0^T \frac{ |J_4(t)-J_4(s)|_H^p}{(t-s)^{1+\alpha p}} ds dt    \\
&\leq 2 \int_0^T dt \int_0^t ds (t-s)^{-1-\alpha p}
\Big| \int_s^t | {\s}_\nu  (r,u^\nu_{h_\nu}(r))|_{L_Q} |h_\nu(r)|_0 dr\Big|^p\\
&\leq C M^{\frac{p}{2} } \big [1+
\sup_{r\leq T} |u^\nu_{h_\nu}(r)|_{H}^p
\big]
\int_0^T dt \int_0^t (t-s)^{-1 + (1/2-\alpha)p} ds.
\end{align*}
The two above estimates and   \eqref{uhnu_estimate1}  
imply that for $\alpha \in (0,\frac{1}{2})$, $p\in [4,\infty)$   and $\nu \in (0, {\nu}_0]$:
\begin{equation}\label{J4Walphap}
\EX  \|J_4\|_{W^{\alpha,p}(0,T;H)}^p \leq C(p,\alpha,T,M)  \big[ 1+ \EX |\zeta|_{H}^p
\big].
\end{equation}
The Burkholder-Davis-Gundy and H\"older inequalities imply
\begin{eqnarray*}
{\EX   }\int_{0}^{T} |J_{3}(t)|^{p}_{H}dt
&\leq &C_p \nu^{p/2}\int_{0}^{T}{\EX   }\left(\int_{0}^{T}|\s_{\nu}(s,u^\nu_{h_\nu}(s))|^{2}_{L_{Q}}ds\right)^{p/2}dt \\
&\leq & C_p T^{p/2 -1} \nu^{p/2}\int_{0}^{T}{\EX   }|\s_{\nu}(s,u^\nu_{h_\nu}(t))|^{p}_{L_{Q}}dt.
\end{eqnarray*}
Let $p\in [4,\infty)$,  $\alpha \in (0, \frac{1}{2})$  and for $t\in [0,T]$ set $\phi(t):=\int_{0}^{t}|\s_{\nu}(s,u^\nu_{h_\nu}(s))|^{2}_{L_{Q}}ds$; then
the Burkholder-Davis-Gundy  and H\"older inequalities imply
\begin{align*}
{\EX   }\int_{0}^{T}\int_{0}^{T} & \frac{|J_{3}(t)-J_{3}(s)|_{H}^{p}}{|t-s|^{1+p\alpha}}dtds=
\nu^{p/2}\int_{0}^{T} dt \int_{0}^{T} ds \frac{{\EX   }|\int_{s\wedge t}^{s\vee t}\s_{\nu}(r,u^\nu_{h_\nu}(r))dW(r)|^{p}_{H}}{|t-s|^{1+p\alpha}}\\
&\leq  C_p \nu^{p/2} \int_{0}^{T}\int_{0}^{T} {\EX   }\left| \int_{s\wedge t}^{s \vee t}| \s_{\nu}(r, u^\nu_{h_\nu}(r))|^{2}_{L_Q}dr\right|^{p/2}
{|t-s|^{-(1+p\alpha)}}dtds\\
&\leq C_p \nu^{p/2}{\EX   } \int_{0}^{T}\int_{0}^{T} |\phi(t)-\phi(s)|^{p/2}|t-s|^{-(1+p\alpha)} dtds\\
&\leq C_p \nu^{p/2}{\EX   }\|\phi\|^{\frac{p}{2}}_{W^{2\alpha,p/2}(0,T; \mathbb{R})}  \\
& \leq C_p \nu^{p/2}{\EX   }\|\phi\|^{\frac{p}{2}}_{W^{1,p/2}(0,T; \mathbb{R})}\\
&\leq C_p C(T)  \nu^{p/2} \EX \int_{0}^{T}|\s_{\nu}(s, u^\nu_{h_\nu}(s))|^{p}_{L_{Q}}ds.
\end{align*}
Using the assumption {\bf (C1)} and the two above upper estimates of $J_3$, we deduce that
\[
\EX \|J_3\|_{W^{\alpha,p}(0,T;H)}^p \leq C(p,T) \nu^{p/2}
\big[ 1+ \sup_{0\leq t\leq T} \EX |u^\nu_{h_\nu}(t)|_{H}^p
\big].
\]
Finally, the upper estimate \eqref{uhnu_estimate1}
yields
for $\nu \in (0,{\nu}_0]$ and $p\in [4,\infty)$:
\begin{equation} \label{J3Walphap}
\EX \|J_3\|_{W^{\alpha,p}(0,T;H)}^p \leq C(p,T) {\nu}_0^{p/2}  \big[ 1+ \EX |\zeta|_{H}^p
\big].
\end{equation}
Collecting all the estimates \eqref{J1W2}-\eqref{J3Walphap}  we deduce  that for $p\in [4,\infty)$,
$\alpha\in (0,1/2)$, there exists a positive constant $C(p,M,T)$ such that for any  $\nu\in (0,{\nu}_{0}]$
\begin{equation}\label{estimunuhnu}
\EX \|u^\nu_{h_\nu}\|_{W^{\alpha,2}(0,T; H)}^2 +  \EX \|u^\nu_{h_\nu}\|_{W^{\alpha,p}(0,T; V')}^p \leq C(p,M,T).
\end{equation}

\par
{\bf Step 2:} The upper estimates
\eqref{uhnu_estimate3} and    \eqref{estimunuhnu}  show  that the process
$(u^\nu_{h_\nu}, \nu \in (0, {\nu}_0])$ is bounded in probability in
\[
W^{\alpha,2}(0,T;H ) \bigcap L^{2}(0,T; V) \bigcap W^{\alpha,p}(0,T; V').
\]
Thanks to the compactness theorem given in \cite{Lions70}, Chapter 1, Section 5. the space
$W^{\alpha,2}(0,T; H)\bigcap L^{2}(0,T; V)$ is compactly embedded in $L^{2}(0,T; \mathcal{H})$.
For $p\alpha>1$, thanks to  Theorem 2.2  given in \cite{FG} (see also  \cite{BP2001} and the references therein),
the space
$W^{\alpha,p}(0,T; V')$ is compactly embedded in ${\mathcal C}([0,T]; D(A^{-\beta}))$ with $2\beta>1$.

On the other hand, the family $(h_\nu)$ is included in
${\mathcal A}_M$. Set $F_\nu(t)=\int_0^t h_\nu(s) ds$;
since $H_0$ is compactly embedded in $H$, we can again use the above compact embedding theorem and deduce
that $W^{1,2}(0,T;H_0 )$ is compactly embedded in ${\mathcal C} ([0,T]; H)$.
Furthermore, by assumption
$h_\nu\to h$  in distribution   in $L^2(0,T ;H_{0})$ endowed with the weak topology. This yields
that $F_\nu \to F$ in distribution in the weak topology of $W^{1,2}(0,T;H_0)$,
denoted by $W^{1,2}(0,T;H_0)_w$, where $F(t):=\int_0^t h(s) ds$.

Hence, by the Prokhorov theorem,
the family of distributions  $(\mathcal{L}(h_\nu, u^\nu_{h_\nu}, \nu \in (0, {\nu}_0])$
of the process $(F_\nu, u^\nu_{h_\nu}, \nu\in (0, {\nu}_0])$
is tight in

\[ {\mathcal Z}: = \Big[ W^{1,2}(0,T;H_0)_w\bigcap {\mathcal C}([0,T], H)\Big]
\times \Big[ L^{2}(0,T;
\mathcal{H})\bigcap {\mathcal C}([0,T]; D(A^{-\beta}))\Big] .\]
Let $(\nu_n , n\geq 0)$ be a sequence in $(0,{\nu}_0]$ such that $\nu_n \to 0$.
Thus, we can extract a subsequence, still denoted by $(F_{\nu_n}, u^{\nu_n}_{h_{\nu_n}})$, that
converges in distribution in  ${\mathcal Z}  $ to a pair $(\bar{F},\bar{u})$ as $n \to \infty$.
Note that by assumption, $\bar{F}=F$.
\medskip

\par
{\bf Step 3:}  By the Skorohod-Jakubowski Theorem,  \cite{Jaku97} Theorem 2, recalled in the Appendix (see also \cite{BrzOnd2011}), there exists a stochastic basis
$(\Omega^{1}, \mathcal{F}^{1}, (\mathcal{F}^{1}_{t}), \PP^{1})$ and on
this basis,   $ {\mathcal Z} $-
valued random variables $(F^1=\int_0^. h^1(s) ds, u^1)$ and for
$n\geq 0$  $(F^{\nu_n,1}=\int_0^. h^{\nu_n,1}(s) ds, u^{\nu_n,1}_{h^{\nu_n,1}})$,
such that  the pairs  $(F^1,u^1)$ and $(\bar{F},\bar{u})$ have the same distribution, for $n\geq 0$
the pairs
$(F^{\nu_n,1}, u^{\nu_n,1}_{h^{\nu_n,1}}) $  and $(F_{\nu_n}, u^{\nu_n}_{h_{\nu_n}})$ have  the
same distribution on ${\mathcal Z}$, and as $n \to \infty$,
$(F^{\nu_n,1}, u^{\nu_n,1}_{h^{\nu_n,1}}) \longrightarrow (F^1,u^{1})$ in
${\mathcal Z}$ ${\mathbb P}^1$ a.s To ease notations in the sequel, we will skip the upper index 1 and the index $n$ of the
subsequence  and still denote $F^{1,\nu}$ by $F_\nu$, $h^{1,\nu}$ by
$h_\nu$, $u^{1,\nu}_{h^{1,\nu}}$ by $u^\nu_{h_\nu}$, $F^1$ by $F$, $h^1$ by $h$ and  $u^1$ by $\bar{u}$.
Let again $\zeta$ denote the initial condition $u^{\nu,1}_{h^{1,\nu}}(0)$. \\
Moreover, by \eqref{uhnu_estimate1}, \eqref{uhnu_estimate3} and \eqref{uhnu_H1q} we deduce the existence of constants $C_i$
such that for $\nu \in (0,\nu_0]$, 
$\alpha \in (0,1/2)$
and   $q\in [2,\infty)$: 

\begin{align*} &  \EX_1 \Big( \sup_{0\leq t\leq T}|u^{\nu}_{h_\nu}(t)|^{2}_{H}\Big) \leq C_{1},\quad
\EX_1 \int_{0}^{T}\|u^{\nu}_{h_\nu}(t)\|^{2} dt \leq C_{2},\quad
\EX_1 \Big( \sup_{0\leq t\leq T}\|u^{\nu}_{h_\nu}(t)\|^{q}_{H^{1,q}(D)}\Big) \leq C_{3} .
\end{align*} 
Therefore,  we can extract a further subsequence which converges weakly to $\bar{u}$ in 
$L^{2}(\Omega^1\times (0,T); V) \bigcap L^q(\Omega^1\times (0,T); H^{1,q})$ as
$n \to \infty$.
This implies that
\begin{equation}\label{u1_space}
\bar{u} \in L^{2}(0,T; V)\bigcap  L^{\infty}\big(0,T; H\cap
H^{1,q}\big)\ \ \PP^{1}-{\rm a.s.}
\end{equation}

\par
{\bf Step 4:} (Identification of the limit.) We have to prove that
the limit $\bar{u}$ is solution of the equation
\begin{equation}\label{uh1}
d\bar{u}(t) + B(\bar{u}(t),\bar{u}(t))\, dt = {\sigma}_{0}(t, \bar{u}(t))\,
h(t)\, dt\; , \quad \bar{u}(0)= {\zeta} .
\end{equation}
Let $\varphi\in D(A^{\beta})$ with $2\beta>1$; then
\begin{equation} \label{identif}
  (u^\nu_{h_\nu}(t)-\zeta ,\varphi)
  + \int_0^t\!\!\!  \big\langle B(\bar{u}(s),\bar{u}(s)) - \s_{0}(s,\bar{u}(s)) h(s)  ,
\varphi\big\rangle  ds  = \sum_{1\leq i\leq 6} I_i,
\end{equation}
where
\begin{align*} I_1&=-\nu\int_{0}^{t}\left(
Au^\nu_{h_\nu}(s),\varphi\right) ds , \quad I_2=
\sqrt{\nu}\int_{0}^{t}\left(\s_{\nu}(s,u^\nu_{h_\nu}(s))dW(s),\varphi\right) , \\
I_3& =  - \int_0^t\!\!\! \big[\left\langle
B(u^{\nu}_{h_\nu}(s)-\bar{u}(s),u^{\nu}_{h_\nu}(s)),\varphi\right\rangle +
\left\langle
B(\bar{u}(s) , u^{\nu}_{h_\nu}(s)-\bar{u}(s)),\varphi\right\rangle  \big]  ds, \\
I_4 &= \int_0^t \left(
\big[ \s_\nu(s,u^{\nu}_{h_\nu}(s)) - \s_0(s,u^{\nu}_{h_\nu}(s))\big]  h_{\nu}(s),\varphi\right) ds,  \\
I_5 & = \int_0^t \left( \big[
\s_{0}(s,{u}^\nu_{h_\nu}(s)) -\s_{0}(s,\bar{u}(s))\big] h_{\nu}(s),\varphi\right) ds ,  \\
I_6 &= \int_0^t \left(
\s_{0}(s,\bar{u}(s))\left[h_{\nu}(s)-h(s)\right],\varphi\right) ds.
\end{align*}
Since $\beta \geq 1/2$    implies that $Dom (A^\beta)\subset V$, using  Cauchy-Schwarz's inequality and  \eqref{uhnu_estimate3},
we deduce for $t\in [0,T]$ and $\nu \in (0, {\nu}_0]$:
\begin{align}\label{errorI_{1}}
{\EX   }_1|I_{1}| &\leq \nu {\EX}_1\int_{0}^{t}\|u^{\nu}_{h_\nu}(s)\|\|\varphi\|ds
\leq  \nu\sqrt{t}\|\varphi\|\left( {\EX   }_1\int_{0}^{t}\|u^{\nu}_{h_\nu}(s)\|^{2}ds\right)^{1/2} \nonumber \\
&\leq  {\nu}  C(T,M) \|\varphi\|
\big[ 1+\EX \|\zeta\|^4\big]^{1/2} .
\end{align}
The  It\^o isometry, the Cauchy-Schwarz's inequality,  condition {\bf (C1)} and \eqref{uhnu_estimate1} yield
\begin{align}\label{errorI_{2}}
{\EX   }_1 |I_{2}|&
\leq \sqrt{\nu} {\EX   }_1 \left(\int_{0}^{t}|\s_{\nu}(s, u^{\nu}_{h_\nu}(s))|^{2}_{L_{Q}}\|\varphi\|^{2}\right)^{1/2}\nonumber\\
&\leq  \sqrt{\nu} \|\varphi\| C(T,M) \big[1+\EX |\zeta|_{H}^4\big]^{1/2}.
\end{align}
Using \eqref{B(u,u)L4},  the Cauchy-Schwarz inequality and \eqref{uhnu_estimate3} we get
\begin{align}\label{errorI_{3}}
{\EX   }_1|I_{3}|&\leq
C{\EX   }\int_{0}^{t}\|u^{\nu}_{h_\nu}(s)-\bar{u}(s)\|_{\mathcal{H}}\left(\|u^{\nu}_{h_\nu}(s)\|_{\mathcal{H}}
+\|\bar{u}(s)\|_{\mathcal{H}}\right)\|\varphi\|ds\nonumber\\
&\leq C\|\varphi\|\left({\EX   }_1\int_{0}^{t} \!\!\! \|u^{\nu}_{h_\nu}(s)-\bar{u}(s)\|^{2}_{\mathcal{H}} ds \right)^{1/2}
\left({\EX   }_1 \int_{0}^{t}\!\!\! \big[ \|u^{\nu}_{h_\nu}(s)\|^{2}_{V}
+\|\bar{u}(s)\|^{2}_{V}\big] ds \right)^{1/2} \nonumber \\
&\leq C(T,M) \|\varphi\| \big[1+\EX\|\zeta\|^4  \big]^{1/2} \left({\EX   }_1 \int_{0}^{t}\! \|u^{\nu}_{h_\nu}(s) -
\bar{u}(s)\|^{2}_{\mathcal H} ds \right)^{1/2} .
\end{align}
Using   assumption {\bf (C4)},  the Cauchy Schwarz inequality and 
  \eqref{uhnu_estimate1}  we obtain  
\begin{align}\label{errorI_{5}}
{\EX   }_1 |I_{4}|&\leq 
{\EX }_1\int_{0}^{t}\big| {\s}_{\nu}(s,u^{\nu}_{h_\nu}(s)) - \s_0(s,u^{\nu}_{h_\nu}(s)) \big|_{L(H_0,H)} |h_\nu(s)|_0
\, |\varphi|_{H}ds\nonumber\\
&\leq   
|\varphi|_{H} \; \sqrt{MT} \;
\Big(  {\EX   }_1 \int_{0}^{t}\big|  {\s}_{\nu}(s, u^{\nu}_{h_\nu}(s)) - \s_0(,u^{\nu}_{h_\nu}(s))  \big|^2_{L(H_0,H)}
ds\Big)^{1/2} \nonumber\\
&\leq 
C(\nu)\,  |\varphi|_{H}\, \sqrt{MT}\,
\left( \EX_1    \int_{0}^{t}\big[ 
1+ |u^\nu_{h_\nu}( s)|_H^{2}\big]ds \, \right)^{1/2}\nonumber\\
&\leq 
C(\nu)  \, |\varphi|_{H}\, C(T,M) \,   \big[1+ \EX_1 |\zeta| ^2_{H}  \big]^{1/2}                .
\end{align}

Condition {\bf (C1)} and  the Cauchy Schwarz inequality yield the existence of $\bar{L}_1>0$
such that for $\nu \in (0, \nu_0]$
\begin{align}\label{errorI_{6}}
{\EX }_1|I_{5}|&\leq {\EX}_1 \int_{0}^{t}\left| {\s}_{0}(s,u^\nu_{h_\nu}) - \s_{0}(s,\bar{u}(s))\right|_{L(H_0,H)} |h_{\nu}(s)|_0
|\varphi|_{H}  ds\nonumber\\
&\leq  |\varphi|_{H}\, \sqrt{MT} \left( \EX_1 \int_{0}^{t} |\s_{0}(s,u^{\nu}_{h_\nu}(s)) - \s_{0}(s,\bar{u}(s))|_{L_{Q}}^2 ds\right)^{1/2} \nonumber\\
&\leq  |\varphi|_{H} \sqrt{MT} \sqrt{\bar{L}_1} \left( \EX_1   \int_{0}^{t}|u^\nu_{h_\nu}(s) - \bar{u}(s)|_{H}^{2}ds\right)^{1/2} .
\end{align}
Finally, we have that
\begin{equation}\label{errorI_{7}}
{\EX   }_1|I_{6}|= {\EX}_1\left|\int_0^t \Big(
\left[h_{\nu}(s)-h(s)\right],\s_{0}^{*}(s,\bar{u}(s))\varphi\Big) ds \right|.
\end{equation}
Using the upper estimates \eqref{errorI_{1}}, \eqref{errorI_{2}},  \eqref{errorI_{5}}
and \eqref{errorI_{6}}  we deduce that $\EX_1 |I_i|\to 0$
for $i=1,2,4$ as $n \to \infty$ and $\nu_n \to 0$.   Furthermore, by construction, we have  $\PP^1$ a.s.
$u^{\nu_n}_{h_{\nu_n}} - \bar{u} \to 0$ in $L^2(0,T;H^{1,4})$ and hence in
$L^2(0,T;{\mathcal H})$ and in $L^2(0,T;H)$  as $n \to 0$.
Furthermore, the estimates \eqref{uhnu_estimate3},  \eqref{uh_estimate1}  prove that
$\int_0^T \|u^\nu_{h_\nu}(s) - \bar{u}(s)\|^2 ds$ is bounded in $L^2(\PP^1)$  and hence is uniformly integrable. Therefore, the dominated
convergence theorem and  \eqref{errorI_{3}}  prove that $\EX_1|I_i|\to 0$ for $i=3,5$.
Finally, condition {\bf (C1)} shows that
\[ \int_0^T |\s_{0}^*(s,\bar{u}(s)) \varphi |_0^2 ds \leq
|\varphi|_{H}^2 \int_0^T \big[ \bar{K}_0 + \bar{K}_1  |\bar{u}(s)|_{H}^2\big] ds
\]
and  by assumption, as $n \to \infty$,  
we have  $h_{\nu_n} -h\to 0$ in $L^2(0,T;H_0)$ for the weak topology $\PP^1$ a.s.
Hence $\PP^1$ a.s., $\int_0^t \left(\left[h_{\nu_n}(s)-h(s)\right],\s_{0}^{*}(s,\bar{u}(s))\varphi\right) ds$ converges
to 0 as $n \to \infty$. Furthermore, the upper estimate  \eqref{uh_estimate1} proves that this family is bounded in $L^2(\PP^1)$; using
once more the dominated convergence theorem,
\eqref{errorI_{7}} proves that $\EX_1 |I_6|\to 0$ as $n \to \infty$.
Thus, \eqref{identif} shows that as $n \to \infty $,   for any $t\in [0,T]$ and $\varphi \in D(A^\beta)$
with $\beta >1/2$:
\begin{equation}  \label{ident1}
\EX_1 \Big[  (u^{\nu_n}_{h_{\nu_n}}(t) ,\varphi) - \int_0^t \big\langle  -B(\bar{u}(s),\bar{u}(s)) + \s_{0}(s,\bar{u}(s)) h(s)  ,
\varphi\big\rangle  ds  \Big]  \to 0 .
\end{equation}
On the other hand, by construction, since $\varphi \in Dom(A^\beta)$, we have $\PP^1$ a.s.
\[ \sup_{t\in [0,T]}
| ( u^{\nu_n}_{h_{\nu_n}}(t) - \bar{u}(t) , \varphi ) |\to 0 \quad  \PP^1 \;   \mbox{\rm a.s. as } \nu \to 0.\]
Using again  \eqref{uhnu_estimate3},  \eqref{uh_estimate1} and the dominated convergence theorem, we deduce that as $n \to \infty$,
\begin{equation} \label{ident2}
\EX_1 \Big( \sup_{t\in [0,T]}
| (u^{\nu_n}_{h_{\nu_n}}(t)  - \bar{u}(t) , \varphi) |\Big) \to 0.
\end{equation}
Since $\PP^1$ a.s. $\bar{u}\in \mathcal{C}([0,T], D(A^{-\beta}))$,
this identity holds a.s. for all $t\in [0,T]$  and $\bar{u}$ is a solution to the inviscid
evolution equation  \eqref{uh}.
Thus the uniqueness of the solution to \eqref{uh}
proved in Theorem \ref{uniquness_uh} implies that $\bar{u}=u_h^0$.
Theorems \ref{existence_uh} and \ref{uniquness_uh} prove that $u^0_h$ belongs to $C([0,T];H)\cap L^\infty(0,T; V\cap H^{1,q})$.
Hence, from any sequence $\nu_n \to 0$, one can extract a subsequence $(\nu_{n_k} , k\geq 0)$ such that
$u^{\nu_{n_k}}_{h_{\nu_{n_k}}} \to u^0_h$ in distribution in ${\mathcal X}$. This implies that the family
$u^\nu_{h_\nu}$ converges to $u^0_h$ in distribution in ${\mathcal H}$, which concludes the proof.
\end{proof}

The following compactness result is the second ingredient which
allows to transfer the  LDP  from $\sqrt{\nu} W$ to $u^\nu$.
\begin{proposition}\label{compact}
Suppose that $\tilde{\s}_{0}$ satisfies condition {\bf (C1Bis)}, {\bf (C2Bis)} and
{\bf (C3qBis)}  for all $q\in [2,+\infty)$.
Fix  $M>0$, $\zeta\in
{\mathcal Y}$ and let $ K_M=\{u_h^0  :  h \in S_M \}$, where $u_h^0$
is the unique solution in ${\mathcal X}$ of the deterministic
control equation \eqref{uh}. Then $K_M$ is a compact subset of
${\mathcal X}$.
\end{proposition}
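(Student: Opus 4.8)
The plan is to prove compactness by a deterministic analogue of the weak convergence argument of Proposition \ref{weakconv}, replacing tightness plus the Skorohod representation with functional-analytic compactness. Since the norm on $\mathcal{X}$ coincides with that of $L^2(0,T;\mathcal{H})$, establishing compactness of $K_M$ reduces to showing that $K_M$ is sequentially compact for the $L^2(0,T;\mathcal{H})$ topology. First I would record uniform a priori bounds: by Theorems \ref{existence_uh} and \ref{uniquness_uh}, every $h\in S_M$ produces a solution $u_h^0$ with $\sup_{t\leq T}\|u_h^0(t)\|\leq C_3(M)(1+\|\zeta\|)$ and $\sup_{t\leq T}\|\mbox{\rm curl }u_h^0(t)\|_q\leq C_4(M)(1+\|\zeta\|+\|\mbox{\rm curl }\zeta\|_q)$, so that $K_M$ is bounded in $L^\infty(0,T;V)\cap L^\infty(0,T;H^{1,q})$, uniformly in $h\in S_M$.

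Next I would establish uniform time regularity. Writing $\partial_t u_h^0=-B(u_h^0,u_h^0)+\tilde{\s}_0(t,u_h^0)h$, the bilinear estimate \eqref{bilinear_estimate1} together with the uniform $V$-bound controls $B(u_h^0,u_h^0)$ in $L^\infty(0,T;V')$, while the growth condition \eqref{growth-tilde} and $h\in S_M$ control the forcing in $L^2(0,T;H)$, both uniformly in $h\in S_M$. This yields a uniform bound for $u_h^0$ in $W^{1,2}(0,T;V')$, hence in $W^{\a,2}(0,T;H)$ for $\a\in(0,\tfrac12)$. Then, given an arbitrary sequence $(u_{h_n}^0)$ in $K_M$, I would use the weak compactness of $S_M$ (a ball in the Hilbert space $L^2(0,T;H_0)$, hence weakly compact and metrizable) to extract $h_n\rightharpoonup h\in S_M$. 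The uniform bounds in $W^{\a,2}(0,T;H)\cap L^2(0,T;V)$ and the compact embedding of this space into $L^2(0,T;\mathcal{H})$ (the theorem of \cite{Lions70} invoked in the proof of Proposition \ref{weakconv}) allow me to pass to a further subsequence along which $u_{h_n}^0\to v$ strongly in $L^2(0,T;\mathcal{H})$, weakly in $L^2(0,T;V)$ and weakly-$*$ in $L^\infty(0,T;V\cap H^{1,q})$, for some limit $v$.

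The decisive step is to identify $v$ as the solution $u_h^0$, by passing to the limit in the weak formulation \eqref{inviscid_weak}. The strong $L^2(0,T;\mathcal{H})$ convergence handles the quadratic term through \eqref{B(u,u)L4}, exactly as the terms $I_3$ and $I_5$ are treated in Proposition \ref{weakconv}. The forcing term $\tilde{\s}_0(\cdot,u_{h_n}^0)h_n$ I would split into $[\tilde{\s}_0(\cdot,u_{h_n}^0)-\tilde{\s}_0(\cdot,v)]h_n$, which vanishes by the Lipschitz condition \eqref{tilde-s-lip} combined with $h_n\in S_M$ and the strong convergence of $u_{h_n}^0$, plus $\tilde{\s}_0(\cdot,v)h_n$, which converges to $\tilde{\s}_0(\cdot,v)h$ since $h_n\rightharpoonup h$ tested against the fixed element $\tilde{\s}_0^*(\cdot,v)\varphi\in L^2(0,T;H_0)$. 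This identifies $v$ as a solution of \eqref{uh} with control $h$, and the uniqueness statement of Theorem \ref{uniquness_uh} yields $v=u_h^0\in K_M$.

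Because the limit equals the unique $u_h^0$ and is therefore independent of the extracted subsequence, a standard subsequence argument shows that the whole sequence $(u_{h_n}^0)$ converges to $u_h^0$ in $\mathcal{X}$, so $K_M$ is sequentially compact and hence compact in the metric space $\mathcal{X}$. I expect the main obstacle to be precisely this limit identification, and within it the passage to the limit in the product $\tilde{\s}_0(\cdot,u_{h_n}^0)h_n$ of a strongly convergent factor with a merely weakly convergent one, which is the point where the Lipschitz control on $\tilde{\s}_0$ and the choice of an admissible test element in $L^2(0,T;H_0)$ are essential.
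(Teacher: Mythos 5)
Your proof is correct and follows essentially the same strategy as the paper's: extract a weakly convergent subsequence $h_n\rightharpoonup h$ from the weakly compact ball $S_M$, obtain bounds on $u_{h_n}^0$ uniform in $n$ in a space compactly embedded in $L^2(0,T;\mathcal{H})$ via the Lions compactness theorem, identify the limit as a solution of \eqref{uh} driven by $h$ by splitting the forcing into a Lipschitz difference (killed by the strong convergence) plus a term linear in $h_n$ (handled by weak convergence tested against a fixed element of $L^2(0,T;H_0)$), and conclude by the uniqueness of Theorem \ref{uniquness_uh} together with a subsequence argument. The one place where you genuinely diverge is the time-regularity bound: the paper decomposes $u_n=\zeta+J_1+J_2$ and estimates the two integrals in $W^{1,q}(0,T;L^q)$ and $W^{1,2}(0,T;L^q)\cap W^{\a,p}(0,T;L^q)$ using \eqref{boundB}, \eqref{uh_gradiantq} and the conditions on $\tilde{\s}_0$, which in addition gives relative compactness in ${\mathcal C}([0,T];D(A^{-\beta}))$ and hence pointwise-in-$t$ identification of the limit; you instead bound $\partial_t u_n$ in $L^2(0,T;V')$ via \eqref{bilinear_estimate1} and \eqref{growth-tilde}, which is more economical and still yields compactness in $L^2(0,T;\mathcal{H})$ (the Aubin--Lions lemma applies directly with the derivative in $L^2(0,T;V')$). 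Two small points to tidy up: interpolating $|w|_H^2\leq\|w\|\,\|w\|_{V'}$ against the uniform $L^\infty(0,T;V)$ bound places $u_n$ in $W^{\a,2}(0,T;H)$ only for $\a<1/4$, not $\a<1/2$ as stated --- harmless, since any $\a>0$ suffices; and, lacking the ${\mathcal C}([0,T];D(A^{-\beta}))$ control, you should add a word on why $(u_{h_n}^0(t),\varphi)\to(v(t),\varphi)$ for a.e.\ $t$ when passing to the limit in \eqref{inviscid_weak} (e.g.\ a further subsequence converging a.e.\ in $t$ in $\mathcal{H}$), which is standard.
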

\begin{proof}
To simplify the notation, we skip the superscript 0 which refers to the inviscid case.
By Theorems \ref{existence_uh} and  \ref{uniquness_uh},  $K_M\subset {\mathcal X}$.
Let $(u_n, n\geq 1)$ be a sequence in $K_M$, corresponding to solutions of
(\ref{uh}) with controls $(h_n, n\geq 1)$ in $S_M$:
\begin{eqnarray*}
d u_n(t) + B(u_n(t), u_n(t))dt =\tilde{\s}_{0}  (t,u_n(t)) h_n(t) dt, \;\;
u_n(0)=\zeta.
\end{eqnarray*}
Since $S_M$ is a bounded closed subset of the Hilbert space $L^2(0,
T; H_0)$, it is weakly compact. So there exists a subsequence of
$(h_n)$, still denoted as $(h_n)$, which converges weakly to a
limit $h \in L^2(0, T; H_0)$. Note that in fact $h \in S_M$ as $S_M$
is closed.

We at first prove that $(u_n)$ is bounded in
$W^{1,2}(0,T;L^q) \cap W^{\alpha,p}(0,T; L^q) \cap L^2(0,T; H^{1,q})$ for any $p,q>2$ and $\alpha < \frac{1}{2}$.
Indeed, $u_n(t)= \zeta + J_1(t) + J_2(t)$, where
\[ J_1(t)= -\int_0^t B(u_n(s),u_n(s)) ds,\quad J_2(t)=\int_0^t \tilde{\s}_{0}(s,u_n(s)) h_n(s) ds.\]
H\"older's  inequality,   \eqref{boundB},
and \eqref{uh_gradiantq}  yield
\begin{equation} \label{I1inviscid}
\|J_1\|_{W^{1,q}(0,T, L^q)}^q    \leq C(T) \sup_{t\in [0,T]} \|u_n(t)\|_{H^{1,q}}^{2q}
\leq  q^{2q}  C(T,M) [1+\|\zeta\| + \|\mbox{\rm curl }\zeta\|_q]^{2q} .
\end{equation}
Furthermore, Minkowski's inequality, the Sobolev embedding theorem (see \eqref{Sobolevq}),
\eqref{normcurl}, condition {\bf (C2Bis)} and \eqref{uh_estimate1}  yield
\begin{align} \label{I2invisci-2}
\|J_2&\|_{W^{1,2}(0,T;L^q)}^2 \leq C(T,q) \int_0^T\!\!\!  \|  \tilde{\s}_{0}(t,u_n(t)) h_n(t)\|_q^2 dt \leq C(T,q)
\int_0^T \!\!\!  \|   \tilde{\s}_{0}(t,u_n(t)) h_n(t)\|^2  dt\nonumber \\
& \leq C(T,q) \int_0^T \!\!  | \mbox{\rm curl } \tilde{\s}_{0}(t,u_n(t))|_{L(H_0,H)}^2
| h_n(t)|_0^2 dt \leq C(T,q) M \big[ \tilde{K}_0  + \tilde{K}_1 \sup_{t\in [0,T]} \|u(t) \|^2] \nonumber \\
&\leq C(T,q,M) [1+\|\zeta\|^2].
\end{align}
The Minkowski and H\"older inequalities, the Sobolev embedding theorem,  \eqref{normcurl},
conditions {\bf (C1Bis)}, {\bf(C2Bis)}, \eqref{uh_estimate1} and \eqref{Sobolevq} imply
\begin{align} \label{I2inviscid-q}
\int_0^T&\!\! \|J_2(t)\|_q^p dt \leq \int_0^T \!\! \Big| \int_0^t \!\!
\| \tilde{\s}_{0}(s, u_n(s)) h_n(s) \|_q ds \Big|^p dt
\leq  C(q)   \int_0^T \!\! \Big| \int_0^t \!\! \| \tilde{\s}_{0} (s, u_n(s)) h_n(s) \| ds \Big|^p dt \nonumber \\
&\leq  C(q)  (MT)^{p/2}  \sup_{t\in [0,T]} | \tilde{\s}_{0}(t,u_n(t))|_{L(H_0,V)}^p
\leq  C(p,q,T,M)
\big[ 1   +  \sup_{t\in [0,T]} \|u(t) \|^p \big]   \nonumber \\
&\leq C(p,q,T,M) [1+\|\zeta\|^p].
\end{align}
Finally, similar arguments imply that for $\alpha \in (0, \frac{1}{2})$, we have
\begin{align}\label{I2inviscid-Wq}
\int_0^T &\int_0^T \frac{ \|J_2(t)-J_2(s)\|_q^p}{(t-s)^{1+\alpha p}} ds dt   \nonumber \\
&\leq 2 C(q)  \int_0^T dt \int_0^t ds (t-s)^{-1-\alpha p}
\Big| \int_s^t \| \tilde{\s}_{0} (r,u_n(r))h_n(r)\| ds\Big|^p\nonumber \\
&\leq  2  C(q) (T M)^{\frac{p}{2} } \, C\,   \big [1+
\sup_{ r\in [0, T]} \|u_n(r)\|^p \big]
\int_0^T dt \int_0^t (t-s)^{-1 + (1/2-\alpha)p} ds \nonumber \\
&\leq C(q,T,M)   \big[1+\|\zeta\|^p\big].
\end{align}
As in the proof of Proposition \ref{weakconv}, Step 3, using \cite{Lions70} we deduce from the upper estimates
\eqref{I1inviscid}-\eqref{I2inviscid-Wq}  that the sequence $(u_n)$
is relatively compact in $ L^2(0,T ; {\mathcal H}) \cap {\mathcal C}([0,T],D(A^{-\beta}) )$ with $2\beta >1$.
Hence there exists a subsequence, still denoted $(u_n)$,
which converges in $ L^2(0,T ; {\mathcal H}) \cap {\mathcal C}([0,T],D(A^{-\beta}) )$ to some element $u$.
It remains to check that $u$ is the solution to the evolution equation
\[ du(t)+ B(u(t),u(t)) dt = \tilde{\s}_{0}(t,u(t)) h(t) dt, \quad  u(0)=\zeta.\]
The proof, which is similar to that of Step 4 in  Proposition \ref{weakconv} and easier, is briefly sketched.
Only (deterministic) terms similar to $I_i$ for $i=3,5$ and $6$ have to be dealt with.
As in the proof of Proposition \ref{weakconv}, these terms are estimated
replacing the upper estimate \eqref{uhnu_estimate3} by \eqref{uh_estimate1}.
This concludes the proof of the Proposition.
\end{proof}

The proof of Theorem \ref{LDP_unu} is a straightforward consequence of Propositions \ref{weakconv} and \ref{compact},
as shown in \cite{BD07}.

\section{Appendix} \label{Appendix}
\subsection{Properties of the bilinear operator}\label{AppB}
Let us at first recall the following classical Sobolev embeddings
which hold since $D$ is a bounded domain of $\RR^2$ which
satisfies the cone condition (see e.g. \cite{adams}):
\begin{align} \label{Sobolevq}
\|u\|_q \leq C(q) \|u\|_{W^{1,2}} \quad \mbox{\rm for } u\in W^{1,2}\quad \mbox{\rm and }  1\leq  q<+\infty, \\
W^{2,1} \subset {\mathcal C}^0_B(D), \;\,  W^{1,q}\subset
{\mathcal C}^0_B(D)  \; \mbox{\rm for}\; q\in (2,\infty).\label{Sobolev}
\end{align}
Furthermore, recall the following result proved in \cite{KaPo}
(see also \cite{BP2001} and \cite{Yu} for the way the constant depends on $q$).
Given  $q\in [2,\infty)$   there exists a
constant $C$ such that for every  $u\in H^{1,q}$ one has:
\begin{equation}\label{normcurl}
\|\nabla u\|_q \leq C q  \|\mbox{\rm curl } u\|_q \;  \mbox{\rm for } q\in [2,\infty) .
\end{equation}
Furthermore, given $q\in [2,\infty)$ and $r>0$, the operator $B$
has
a unique  extension to a continuous bilinear operator
from $H^{1,q}\times H^{1,q}$ to $H^{-r,q}$ and the following
estimates are satisfied for some constant $C$ and all $u,v\in
H^{1,q}$  resp. $\varphi, \psi \in D(A)$:
\begin{align}
& \|B(u,v)\|_{H^{-r,q}} \leq C \, \|u\|_{H^{1,q}}\,   \|v\|_{H^{1,q}},  \label{normBrq}\\
& \langle B(u,v)\, ,\, v\rangle =0
,\label{Bnul} \\
&\langle \mbox{\rm curl }  B(\varphi,\varphi),\psi\rangle
=\langle \varphi \cdot \nabla(\mbox{\rm curl }  \varphi),\psi\rangle = \langle B(\varphi,\mbox{\rm curl } \varphi ),\psi\rangle  ,
\label{curlB} \\
& \langle \mbox{\rm curl } B(u,v) \, ,\,  \mbox{\rm curl }v |
\mbox{\rm curl }v |^{q-2}\rangle  =0 \quad \mbox{\rm for all }
u,v\in  H^{2,q}\bigcap D(A).\label{curlBnul}
\end{align}
Finally, if $q>2$, there exists a constant $C>0$ such that for all
$u,v\in H^{1,q}$
\begin{align}
& |B(u,v)|_H  \leq C \, \|u\|_{H^{1,q}} \,   \|v\|_{H^{1,2}} \quad
\mbox{\rm and }\;  \|B(u,v)\|_q  \leq C \, \|u\|_{H^{1,q}} \,
\|v\|_{H^{1,q}} .
\label{boundB}
\end{align}

\subsection{Radonifying operators and stochastic calculus in $W^{k,q}$ spaces}\label{Radon}
In this section, we recall the basic definitions and results of
stochastic calculus on non Hilbert Sobolev spaces used in this
paper. Their proofs can be found in references \cite{BP99},
\cite{BP2001}, \cite{Det},  \cite{N78} and \cite{O}.

Let ${\mathcal E}$ be a Banach space, such as the Sobolev spaces  $W^{k,q}$
for $k\geq 0$ and $q\in [1,\infty)$,  and let  $H_0$ be a
Hilbert space. The following notion extends that of Hilbert
Schmidt operator from $H_0$ to ${\mathcal E}$ when ${\mathcal E}$
is not a Hilbert space. Let $(e_k)$ denote an orthonormal basis of
$H_0$ and $(\beta_k)$ be a sequence of independent standard
Gaussian random variables on some probability space $(\tilde{\Omega}, \tilde{\mathcal F}, \tilde{P} )$.
\begin{Def}
A linear operator $K:H_0\to {\mathcal E}$ is Radonifying if the
series $\sum_k \beta_k Ke_k$ converges in $L^2(\tilde{\Omega}  \; {\mathcal
E})$. Let $R(H_0,{\mathcal E})$ denote the set of Radonifying
operators, and given  $K\in R(H_0,{\mathcal E})$, set
\begin{equation} \label{normRadon} \|K\|_{R(H_0,{\mathcal E})}= \Big(\tilde{\EE}\Big|\sum_k \beta_k
Ke_k\Big|_{\mathcal E}^2\Big)^{\frac{1}{2}}.
\end{equation}
\end{Def}
Then  $(R(H_0,{\mathcal E}), \|K\|_{R(H_0,{\mathcal E})})$ is a
separable Banach space and $\|K\|_{R(H_0,{\mathcal E})} $ does not depend on the choice
of $(e_k)$ and $(\beta_k)$.

We now suppose that $H_0$ is the RKHS of the $H$-valued Wiener
process $(W(t), t\geq 0)$ and fix some orthonormal  basis $(e_k)$ of $H_0$.
Simple $R(H_0, {\mathcal E})$- valued  processes $\sigma$ on
$[0,T]$ are defined as follows.  Given integers $m,n \geq 1$,
$0\leq t_1<t_2<\cdots<t_{m+1}\leq T$, and  $\big( \sigma_j \in
L^2(\Omega,{\mathcal F}_{t_j} ; R(H_0,{\mathcal E})), j=0, \cdots,
m \big)$ set
\[\sigma(t,\omega):=\sum_{0\leq j \leq m} \sigma_j(\omega)
1_{(t_j, t_{j+1}]}(t).\]
For such a simple process $\sigma$, and
$t\in (0,T)$, set
\[
\int_0^t \sigma(s) dW_s :=\sum_{0\leq j\leq m} \sigma_j(\omega) Q^{\frac{1}{2}}
\big( W(t_{j+1}\wedge t)- W(t_j\wedge t)\big).\]
The extension of stochastic integrals to
predictable square integrable processes cannot be done for any
Banach space ${\mathcal E}$. Fix $k\in [0,\infty)$ and $q\in
[2,\infty)$ and let ${\mathcal E}=W^{k,q}$
(with the convention $L^q=W^{0,q}$).  The stochastic integral can be extended uniquely
as a linear bounded operator from the set of predictable processes
in $L^2(0,T;R(H_0, H^{k,q}))$ to the set of $({\mathcal F}_t)$
adapted random variables in $L^2(\Omega, H^{k,q}) $. Moreover, the
following Burkholder-Davies-Gundy inequality holds (see e.g.
\cite{O}, section 5): For any $p\in [1,\infty)$, there exists a constant
$C_p>0$ such that for any predictable process  $\sigma\in
L^2(0,T;R(H_0,H^{k,q}))$,
  \begin{equation}\label{BDGRadon}
\EE\Big(\sup_{0\leq t\leq T} \Big|\int_0^t  \sigma(s)
dW_s\Big|_{H^{k,q}}^p \Big) \leq C_p\, \EE\Big( \int_0^T
\|\sigma(s)\|_{R(H_0,H^{k,q})}^2\, ds\Big)^{\frac{p}{2}}
\end{equation}
Finally, given $2\leq q\leq p< \infty$,  some predictable processes $\sigma\in
L^2(0,T; R(H_0,H^{0,q}))$ and $f\in L^1(0,T; H^{0,q})   $,
we state a particular case
of the It\^o formula applied to the function
$\Psi_{q,p}(.)=\|.\|^p_q$ on $H^{0,q}$ and the $H^{0,q}$-valued process
$(Z_t, t\in [0,T])$ defined by
\[Z(t)=Z(0)+\int_0^t \sigma(s) dW(s)
+ \int_0^t f(s) ds . \]
With the above notations,  if $\langle F,G\rangle$ denotes the duality between $F\in L^q$ and $G\in L^{q*}$
with $q*=\frac{q}{q-1}$, we have:
\begin{align}\label{ItoRadon}
& \|Z(t)\|_q^p =  \|Z(0)\|_q^p + p \int_0^t \|Z(s)\|_q^{p-q} \langle |Z(s)|^{q-2} Z(s)\, ,\, f(s)\rangle ds
\nonumber \\
&\quad + p \int_0^t \|Z(s)\|_q^{p-q} \langle |Z(s)|^{q-2} Z(s)\, ,\, \sigma(s) dW(s) \rangle
+ \frac{1}{2} \int_0^t \mbox{\rm tr}_{\sigma(s)} \Psi_{q,p}''(Z(s)) ds,
\end{align}
and for every $u\in H^{0,q}$,
\begin{equation} \label{Itocor}
0\leq \mbox{\rm tr}_{\sigma(s)} \Psi_{q,p}''(u) \leq p(p-1) \, \|u\|_q^{p-2}\, \|\sigma(s)\|_{R(H_0,H^{0,q})}^2.
\end{equation}
\subsection{Nemytski operators}\label{Nemitski_op} In this section we will show that assumptions
{\bf (C1)} -- {\bf (C3qBis)}  are satisfied by Nemytski operators.
\begin{definition}
Let $q\in [2,\infty)$. A mapping $g: [0,T]\times D\times
\mathbb{R}^{2}\longrightarrow \mathbb{R}^{2}$ belongs to the class
$U(D, q)$ if and only if $g(t,x,y)=g^{1}(t,x)+g^{2}(t,x,y)$, $t\in
[0,T]$, $x\in D$, $y\in \mathbb{R}^{2}$, where:
\begin{enumerate}
\item $g^{1}$ and $g^{2}$ are measurable, and for any $t\in [0,T]$,
$g^{1}(t,\cdot)\in H^{1,2}\cap H^{1,q}$   and $g^{2}(t,\cdot,\cdot)$
is differentiable,
\item there are a constant $c>0$ and $\phi\in L^{2}(D)\cap L^{q}(D)$
such that all $t\in [0,T]$, and $x\in D$, $y\in \mathbb{R}^{2}$,
$$|g^{1}(t,\cdot)|_{H^{1,2}}+|g^{1}(t,\cdot)|_{H^{1,q}}\leq c,$$
$$|g^{2}(t,x,y)|+\sum_{i=1,2} |\partial _{x_{i}}g^{2}(t,x,y)|\leq
c(\phi(x)+|y|),\quad \sum_{i=1,2}|\partial _{y_{i}}g^{2}(t,x,y)|\leq c.$$
\end{enumerate}
We say that $g: [0,T]\times D\times \mathbb{R}^{2}\longrightarrow
\mathbb{R}^{2}$ belongs to the class $U(D, \infty)$ if and only if
it is differentiable with respect to the second and third variables,
and there is a constant $c>0$ such that for all $t\in [0,T],\ x\in D,\ y\in \mathbb{R}^{2}$:
$$|g(t,x,y)|+\sum_{i=1,2}|\partial _{x_{i}}g(t,x,y)|+ \sum_{i=1,2}|\partial _{y_{i}}g(t,x,y)|\leq
c.$$
\end{definition}
Let $g_{i}$, $i=1, \cdots, m$  and $\tilde{g}$ be in $U(D, q)$ and define the Nemytski
operators
\begin{equation}\label{nemitski}
\tilde{\sigma}(t,u)(x)=\tilde{g}(t,x,u(x)),\ \ {\rm and }\ \
\sigma(t,u)\psi(x)=\sum_{1\leq i\leq m} g_{i}(t,x,u(x))\psi_{i}(x),
\end{equation}
where $\psi_i \in H_{0}$, $i=1, \cdots, m$.  These operators satisfy the assumptions {\bf
(C3q)} and {\bf (C3qBis)} (see e.g. \cite{BP2001}).
The condition $U(D,\infty)$ obviously implies $U(D,q)$ for every $q\in [2,\infty)$.
Therefore, if the coefficients $\tilde{g}$ and $g_i$ belong to the class $U(D,\infty)$,
then $\s$ and $\tilde{\s}$  satisfy
the conditions {\bf (C1)}, {\bf (C1Bis)}, {\bf (C2)}, {\bf (C2Bis)},
  {\bf (C3q)} and {\bf (C3qBis)} for all $q\in [2,\infty)$.

\subsection{The Skorohod-Jakubowski representation theorem}\label{Jakubowski}
Let $\mathcal{Z}$ be a topological space such that there exists a sequence $(f_{j})$ of continuous functions 
$f_j : \mathcal{Z} \to [-1,1]$ 
that separate points of $\mathcal{Z}$. 

The following result is proved in \cite{Jaku97}, Theorem 2. 
\begin{theorem}
Let $(\mathcal{P}_{j}, j\in {\mathbb N})$  be a tight sequence of Borel probability measures on $\mathcal{Z}$.
Then there exist a subsequence $(j_{k})$ and Borel measurable maps
$ \theta_{k}: [0,1]\rightarrow\mathcal{Z},\quad k\geq 1$ such that 
for each $k\geq 1$, $\mathcal{P}_{j_k}$ is equal to the law of
$\theta_{k}$ and for every $s\in [0,1],\quad  \theta_{k}(s)\rightarrow \theta(s)\quad {\rm in}\quad
\mathcal{Z}$ as $k\to \infty$.  
\end{theorem}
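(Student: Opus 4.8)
The plan is to reduce the statement to the classical Skorohod representation theorem on a Polish space by exploiting the countable separating family $(f_j)$, transporting the entire problem into the compact metrizable cube $K=[-1,1]^{\N}$. First I would introduce the evaluation map $F:\mathcal{Z}\to K$ defined by $F(z)=\big(f_j(z)\big)_{j\ge 1}$; since each $f_j$ is continuous and the family separates points, $F$ is continuous and injective, and in particular $\mathcal{Z}$ is Hausdorff. Pushing the measures forward, I set $\mathcal{Q}_j=\mathcal{P}_j\circ F^{-1}$, a sequence of Borel probability measures on the Polish space $K$. Using tightness of $(\mathcal{P}_j)$ I fix an increasing sequence of compacts $(K_m)\subset\mathcal{Z}$ with $\sup_j \mathcal{P}_j(\mathcal{Z}\setminus K_m)\le 2^{-m}$; because $F$ restricted to each compact $K_m$ is a continuous injection of a compact set into a Hausdorff space, it is a homeomorphism onto $\tilde K_m:=F(K_m)$, so $F^{-1}$ is continuous on each $\tilde K_m$ and Borel on the $\sigma$-compact set $\tilde{\mathcal Z}_0:=\bigcup_m \tilde K_m$.

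Second, to control the convergence I would track in which ``layer'' the mass sits. Let $\kappa(y)=\min\{m:y\in\tilde K_m\}$ for $y\in\tilde{\mathcal Z}_0$ and lift $\mathcal{Q}_j$ to the Polish space $K\times(\N\cup\{\infty\})$ via $\hat{\mathcal Q}_j=(\mathrm{id},\kappa)_\ast \mathcal{Q}_j$. The uniform tail bound $\hat{\mathcal Q}_j\big(K\times\{m,m+1,\dots\}\big)\le 2^{-(m-1)}$ makes $(\hat{\mathcal Q}_j)$ tight, so by Prokhorov a subsequence $(\hat{\mathcal Q}_{j_k})$ converges weakly to some $\hat{\mathcal Q}$, which by the same bound gives no mass to $K\times\{\infty\}$. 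Applying the classical Skorohod theorem on $[0,1]$ equipped with Lebesgue measure to $(\hat{\mathcal Q}_{j_k})$, I obtain Borel maps $(\eta_k,\ell_k)$ and $(\eta,\ell)$ with the prescribed laws such that $(\eta_k(s),\ell_k(s))\to(\eta(s),\ell(s))$ for Lebesgue-a.e. $s$, with $\ell(s)\in\N$.

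Third, I would transfer the convergence through $F^{-1}$. The purpose of the extra coordinate is that convergence $\ell_k(s)\to\ell(s)$ in the discrete space $\N$ forces $\ell_k(s)=\ell(s)=:m(s)$ for all large $k$; since $\eta_k(s)\in\tilde K_{\ell_k(s)}$ by construction, the whole tail of $(\eta_k(s))_k$ together with its limit $\eta(s)$ lies in the single compact $\tilde K_{m(s)}$. Continuity of $F^{-1}$ on that one compact then yields $\theta_k(s):=F^{-1}(\eta_k(s))\to F^{-1}(\eta(s))=:\theta(s)$ in $\mathcal{Z}$ for a.e. $s$; redefining $\theta_k$ and $\theta$ to equal a fixed point of $\mathcal Z_0$ on the exceptional null set leaves the laws unchanged and upgrades the convergence to every $s\in[0,1]$. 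Since $\eta_k$ has law $\mathcal{Q}_{j_k}=F_\ast\mathcal{P}_{j_k}$ and $F^{-1}$ is the inverse Borel isomorphism on $\tilde{\mathcal Z}_0$, each $\theta_k$ has law $\mathcal{P}_{j_k}$, as required.

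The hard part will be exactly this final transfer: because $\mathcal{Z}$ need not be metrizable and $F^{-1}$ is continuous only on each compact layer and not on the whole union $\tilde{\mathcal Z}_0$, a bare application of classical Skorohod on $K$ does not suffice. The device of enlarging the state space by the level coordinate $\kappa$, together with the summable tail estimate $\mathcal P_j(\mathcal Z\setminus K_m)\le 2^{-m}$, is what confines, for almost every $s$, the representing sequence to a single compact and thereby makes $F^{-1}$ continuous along it. The remaining measurability verifications, namely that $F$ is a Borel isomorphism of $\mathcal Z_0$ onto $\tilde{\mathcal Z}_0$ so that each $\theta_k$ is genuinely $(\mathcal B([0,1]),\mathcal B(\mathcal Z))$-measurable, are routine consequences of the Lusin--Souslin theorem and are not expected to cause difficulty.
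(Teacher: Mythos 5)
Your proof is correct, but note that the paper itself contains no proof of this statement: it is quoted verbatim from Jakubowski \cite{Jaku97} (Theorem 2), so the only meaningful comparison is with Jakubowski's original argument. Your global reduction coincides with his: embed $\mathcal{Z}$ into the compact metrizable cube $K=[-1,1]^{{\mathbb N}}$ via $F=(f_j)_{j\geq 1}$, use tightness to confine the mass to a $\sigma$-compact set $\bigcup_m K_m$ on which $F$ is layerwise a homeomorphism (continuous injection of a compact into a Hausdorff space), push the measures forward, apply classical metric-space machinery, and pull back through $F^{-1}$. You correctly isolate the genuine difficulty, namely that $F^{-1}$ is continuous on each $\tilde K_m$ but not on the union, so almost sure convergence of a representing sequence in $K$ does not by itself transfer back to $\mathcal{Z}$. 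Where you diverge from Jakubowski is the mechanism for the confinement step: he in effect works layer by layer with the restrictions of the pushforward measures to the compacts $\tilde K_m$ and builds the representing maps on $[0,1]$ by hand, whereas you lift each $\mathcal{Q}_j$ to $K\times({\mathbb N}\cup\{\infty\})$ via the layer index $\kappa$, use the uniform bound $\hat{\mathcal Q}_j(K\times\{m,m+1,\dots\})\leq 2^{-(m-1)}$ together with the portmanteau inequality on the closed sets $K\times\{m,\dots,\infty\}$ to rule out mass at $\infty$ in the limit, and then exploit that points of ${\mathbb N}$ are isolated, so $\ell_k(s)\to\ell(s)$ forces eventual equality and traps the tail of $(\eta_k(s))$, together with its limit $\eta(s)$ (the $\tilde K_m$ being closed), inside a single compact where $F^{-1}$ is continuous. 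This buys a shorter argument with a single black-box application of the classical Skorohod theorem on the Polish space $K\times({\mathbb N}\cup\{\infty\})$, at the price of invoking the $([0,1],\mbox{Leb})$ version of that theorem. The supporting details all check out: injectivity of $F$ gives $F^{-1}(\tilde K_{m})=K_{m}$, which is exactly what the tail estimate for $\hat{\mathcal Q}_j$ requires; $F^{-1}$ is Borel on $\bigcup_m\tilde K_m$ simply because each $F|_{K_m}$ is a homeomorphism between compact metrizable sets (Lusin--Souslin is not even needed); and redefining $\theta_k,\theta$ at a fixed point of $\bigcup_m K_m$ on the Lebesgue-null exceptional set upgrades almost everywhere convergence to convergence at every $s\in[0,1]$ without altering the laws, which is precisely the form of the conclusion stated above.
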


\smallskip

\textbf{Acknowledgments:} The work of H. Bessaih was partially
supported by the NSF grant No. DMS 0608494.
This work was partially written while the authors were staying at the Isaac Newton Institute
for Mathematical Sciences in Cambridge. They would like to thank the Institute for the financial
support, the very good working conditions and the friendly atmosphere.
\bigskip

\end{document}